\newtheorem{theorem}{Theorem}[section]
\newtheorem{lemma}[theorem]{Lemma}
\newtheorem{cor}[theorem]{Corollary}
\newtheorem{prop}[theorem]{Proposition}
\theoremstyle{definition}
\newtheorem{definition}[theorem]{Definition}
\newtheorem{example}[theorem]{Example}
\newtheorem{problem}{Problem}
\theoremstyle{remark}
\newtheorem{remark}[theorem]{Remark}
\numberwithin{equation}{section}
\newcommand{\next}{\text{next}}
\newcommand{\K}{\mathbb K}
\newcommand{\C}{\mathbb C}
\title[Invariant subspaces for non-normable Fréchet spaces]{Invariant subspaces for non-normable Fréchet spaces}
\author[Q. Menet]{Quentin Menet}
\address{Quentin Menet, Univ. Artois, EA 2462, Laboratoire de Mathématiques de Lens (LML), F-62300 Lens, France}
\email{quentin.menet@univ-artois.fr}
\subjclass[2010]{47A15; 47A16}
\keywords{Invariant subspaces, Invariant subsets, Fréchet spaces}
\begin{document}
\begin{abstract}
A Fréchet space $X$ satisfies the Hereditary Invariant Subspace (resp. Subset) Property if for every closed infinite-dimensional subspace $M$ in $X$, each continuous operator on $M$ possesses a non-trivial invariant subspace (resp. subset). In this paper, we exhibit a family of non-normable separable infinite-dimensional Fréchet spaces satisfying the Hereditary Invariant Subspace Property and we show that many non-normable Fréchet spaces do not satisfy this property. We also state sufficient conditions for the existence of a continuous operator without non-trivial invariant subset and deduce among other examples that there exists a continuous operator without non-trivial invariant subset on the space of entire functions $H(\mathbb{C})$.
\end{abstract}
\maketitle
\section{Introduction}

A topological vector space $X$ is a Fréchet space if the topology of $X$ is induced by a non-decreasing and separating sequence of seminorms $(p_j)_{j\ge 1}$ and if the metric given by $d(x,y)=\sum_{j\ge 1}2^{-j}\min\{1,p_j(x-y)\}$ is complete. We will then write $(X,(p_j)_j)$ in order to refer to the Fréchet space $X$ endowed with the topology induced by the seminorms $(p_j)$ and we will always assume that the sequence $(p_j)$ is chosen non-decreasing and separating. We remark that if $(\lambda_n)_{n\ge 1}$ is a non-decreasing sequence of positive numbers and $(j_n)_{n\ge 1}$ is an increasing sequence then the sequence of seminorms $(p_j)_j$ induces the same topology than the sequence of seminorms $(\lambda_n p_{j_n})_n$. In particular, we can choose to work with a subsequence of the sequence $(p_j)_j$ when it is necessary. 

We denote by $L(X)$ the set of continuous operators on $X$. We recall that if $(X,(p_j)_j)$ is a Fréchet space then a linear map $T$ on $X$ is continuous if and only if for every $j\ge 1$, there exists $C>0$ and $J\ge 1$ such that for every $x\in X$, $p_j(Tx)\le C p_J(x)$. In particular, the seminorms $p_j$ and $p_J$ in these inequalities can be nonequivalent. This fact will play a key role in our constructions of continuous operators satisfying desired properties.

The goal of this paper consists in trying to better understand the Invariant Subset Property and the Invariant Subspace Property for non-normable Fréchet spaces, i.e. Fréchet spaces which are not Banach spaces.

\begin{definition}
Let $(X,(p_j)_j)$ be a Fréchet space and $T\in L(X)$. 
\begin{itemize}
\item We say that $T$ possesses a non-trivial invariant subspace (resp. subset) if there exists a closed subspace (resp. subset) $M$ different from $\{0\}$ and $X$ such that $TM\subset M$.
\item We say that $X$ satisfies the Invariant Subspace Property (resp. Invariant Subset Property) if every operator $T\in L(X)$ possesses a non-trivial invariant subspace (resp. subset).
\item We say that $X$ satisfies the Hereditary Invariant Subspace Property (resp. Hereditary Invariant Subset Property) if every closed infinite-dimensional subspace of $X$ satisfies the Invariant Subspace Property (resp. Invariant Subset Property).
\end{itemize}
\end{definition}

If $X$ is a complex finite-dimensional Fréchet space with $\dim X\ge 2$ then $X$ satisfies the Invariant Subspace Property since every continuous operator on $X$ possesses an eigenvector. On the other hand, if $X$ is a non-separable Fréchet space then $X$ also satisfies the Invariant Subspace Property since for every continuous operator $T$ on $X$, it suffices to consider the invariant subspace $\overline{\text{span}}\{T^nx:n\ge 0\}$ where $x$ is  some non-zero vector. We thus restrict our study to separable infinite-dimensional Fréchet spaces.

In the case of Banach spaces, we know thanks to Enflo and Read that there exist separable infinite-dimensional Banach spaces which do not satisfy the Invariant Subspace Property~\cite{Enflo1, Enflo2, Read1} and even separable infinite-dimensional Banach spaces which do not satisfy the Invariant Subset Property~\cite{Read3}. A classical example is given by the Banach space $\ell_1$~\cite{Read2,Read3}. This means that there exists a continuous operator $T$ on $\ell_1$ such that for each non-zero vector $x\in  \ell_1$, the orbit $\text{Orb}(x,T):=\{T^nx:n\ge 0\}$ is dense in $\ell_1$. In 2011, Argyros and Haydon~\cite{Argyros1} gave an example of separable infinite-dimensional Banach space satisfying the Invariant Subspace Property. The existence of non-trivial invariant subspaces for any continuous operator on this Banach space comes from the fact that every continuous operator on this spaces has the form $I+\lambda K$ where $K$ is a compact operator~\cite{Bernstein, Halmos, Lomonosov}. Finally, in 2014, Argyros and Motakis~\cite{Argyros2} modified the space constructed by Argyros and Haydon in order to get the first example of separable infinite-dimensional Banach space satisfying the Hereditary Invariant Subspace Property. This space is also the first example of reflexive Banach spaces satisfying the Invariant Subspace Property and it is an important open question to know if every reflexive separable infinite-dimensional Banach space satisfies the Invariant Subspace Property.

In the case of non-normable Fréchet spaces, an example of separable infinite-dimensional non-normable Fréchet space satisfying the Invariant Subspace Property is known for a long time. Indeed, It has been shown in 1969 by Körber~\cite{Korber} and later by Shields~\cite{Shields} that the space $\omega$ satisfies this property, where $\omega$ is the space of all complex sequences endowed with the seminorms $p_j(x)=\max_{n\le j}|x_n|$. It was even shown by Johnson and Shields~\cite{Johnson} that every operator on $\omega$ (that is not a scalar mulitple of the identity) possesses a non-trivial hyperinvariant subspace. Since every closed infinite-dimensional subspace is isomorphic to $\omega$ (see Theorem 2.6.4 and  Corollary 2.6.5 in \cite{Bonet}), we deduce in particular that $\omega$ satisfies the Hereditary Invariant Subspace Property. The first examples of separable infinite-dimensional non-normable Fréchet space which do not satisfy the Invariant Subspace Property were given by Atzmon~\cite{Atzmon, Atzmon2}. In 2012, Goli\'{n}ski~\cite{Golinski1} exhibited  classical examples by showing that the space of entire functions $H(\mathbb{C})$ and the Schwartz space of rapidly decreasing functions $s$ do not satisfy the Invariant Subspace Property. He even showed that the space  $s$ does not satisfy the Invariant Subset Property~\cite{Golinski2}.  However, it is not known if the space  $H(\mathbb{C})$ does not satisfy the Invariant Subset Property. Since $H(\mathbb{C})$  and  $s$ are reflexive spaces, we remark that the reflexivity does not imply the Invariant Subspace Property in the case of non-normable Fréchet spaces.

We try in this paper to better understand which non-normable Fréchet spaces satisfy the (Hereditary) Invariant Subspaces/Subsets Properties. To this end, we first generalize the results obtained by Körber~\cite{Korber} and Shields~\cite{Shields} by showing that if $Y$ is a Fréchet space with a continuous norm then $\omega\oplus Y$ satisfies the Invariant Subspace Property. We then look at the approach of Goli\'{n}ski concerning the existence of operators without non-trivial invariant subspaces. In his paper~\cite{Golinski1}, Goli\'{n}ski states sufficient conditions for the construction of a Read-type operator without non-trivial invariant subspaces and he shows in \cite{Golinski2} that it is also possible to construct a Read-type operator without non-trivial invariant subset on $s$. In this paper, we state sufficient conditions for the construction of a Read-type operator without non-trivial invariant subset. While the conditions stated by Goli\'{n}ski concerned only Köthe sequence spaces of type $\ell_1$, our results can be applied to any Fréchet space with a Schauder basis and thus in particular to any Köthe sequence space. These conditions will allow us among other results to show that the space $H(\mathbb{C})$ does not satisfy the Invariant Subset Property and that a large family of non-normable Fréchet spaces do not satisfy the Hereditary Invariant Subset Property.

Our paper is organized as follows. In Section~\ref{Sec1}, we show that if the kernels of seminorms $p_j$ satisfy some conditions then $(X,(p_j)_j)$ satisfies the Invariant Subspace Property. In Section~\ref{Sec2}, we start by stating some facts concerning Schauder basis in Fréchet spaces (Subsection~\ref{Sec21}). We then state our sufficient conditions for the construction of a Read-type operator without non-trivial invariant subset and investigate the consequences of this result for the Invariant Subset Property (Subsection~\ref{Sec22}) and for the Hereditary Invariant Subset Property (Subsection~\ref{Sec23}). The technical proof of the above-mentioned sufficient conditions will be given in Section~\ref{last}.

\section{Fréchet spaces satisfying the (Hereditary) Invariant Subspace Property}\label{Sec1}

As mentioned in the Introduction, we know that there exist Banach spaces satisfying the Invariant Subspace Property~\cite{Argyros1} and even the Hereditary Invariant Subspace Property~\cite{Argyros2}. In the context of non-normable Fréchet spaces, it is easier to exhibit spaces with the Invariant Subspace Property by using the non-triviality of  kernels of seminorms inducing the topology of the space. 
\begin{theorem}\label{omega}
Let $(X,(p_j)_j)$ be a Fréchet space without continuous norm. If there exists $j_0$ such that for every $j\ge j_0$, $\ker p_{j+1}$ is a subspace of finite codimension in $\ker p_j$ then $X$ satisfies the Invariant Subspace Property.
\end{theorem}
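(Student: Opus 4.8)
The plan is to reduce to the known case of $\omega$ by analyzing the structure of $X$ coming from the hypothesis on the kernels. First I would observe that since $X$ has no continuous norm, $\bigcap_j \ker p_j = \{0\}$ forces the kernels $\ker p_j$ to be a strictly decreasing (up to passing to a subsequence) chain of closed subspaces whose intersection is trivial. Combined with the finite-codimension hypothesis for $j \ge j_0$, I would set $N_j := \ker p_j$ for $j \ge j_0$, so that $N_{j_0} \supset N_{j_0+1} \supset \cdots$ with $\dim(N_j / N_{j+1}) < \infty$ and $\bigcap_{j \ge j_0} N_j = \{0\}$. The quotient $X / N_{j_0}$ is normed by (the seminorm induced by) $p_{j_0}$. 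So $X$ sits in an extension: a "normable part" $X/N_{j_0}$ on top of the subspace $N_{j_0}$, and on $N_{j_0}$ the restricted seminorms $(p_j|_{N_{j_0}})_{j > j_0}$ have finite-codimensional kernels descending to $\{0\}$ — which is exactly the defining property of $\omega$ (a countable product of lines, filtered by finite-codimensional subspaces). I would make precise that $N_{j_0}$, with the subspace topology, is isomorphic to $\omega$ (or to a finite-dimensional space, but infinite-dimensionality of $N_{j_0}$ follows from the no-continuous-norm hypothesis since otherwise $p_{j_0}$ would already be a norm on a finite-codimensional piece and one could build a continuous norm).

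The heart of the argument is then: given $T \in L(X)$, produce a non-trivial closed invariant subspace. The natural candidate families are (i) subspaces built from the kernels $N_j$ themselves, and (ii) subspaces built from a single orbit. For (i), note $T$ need not preserve any individual $N_j$, but continuity gives, for each $j$, some $J$ and $C$ with $p_j(Tx) \le C p_J(x)$; in particular $T(N_J) \subset N_j$. Iterating/intersecting, the subspace $N_\infty' := \bigcap_j \overline{T^{-1}\cdots}$ — more cleanly, I would consider whether $T$ maps the ``tail'' $\bigcup$-type or $\bigcap$-type spaces into themselves. The cleanest route: look at the closed subspace $Z := \overline{\bigcup_{k \ge 0} T^k(N_{j_0})}$ if it is proper, done; if $Z = X$, then $N_{j_0}$ is $T$-hypercyclic-like and $X$ is ``almost'' $\omega$, and I would invoke the Köber–Shields result that $\omega$ satisfies the Invariant Subspace Property together with the fact (cited in the Introduction, Theorem 2.6.4 and Corollary 2.6.5 of \cite{Bonet}) that every closed infinite-dimensional subspace of $\omega$ is isomorphic to $\omega$. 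Concretely, one expects that the hypothesis forces $X$ itself to be isomorphic to $\omega \oplus F$ or to $\omega$ for $F$ finite-dimensional — indeed $X$ has a fundamental decreasing sequence of finite-codimensional closed subspaces with trivial intersection, and picking a complement of $N_{j+1}$ in $N_j$ at each stage together with a complement of $N_{j_0}$ in $X$ (finite-dimensional since $p_{j_0}$ would otherwise need to be unbounded-codimension — this needs care if $X/N_{j_0}$ is infinite-dimensional) exhibits $X$ as a product of finite-dimensional spaces, i.e. as $\omega$ up to isomorphism when $X/N_{j_0}$ is finite-dimensional.

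So the real plan is: show $X \cong \omega$ when $\dim X/N_{j_0} < \infty$, and reduce the general case (where $X/N_{j_0}$ may be an arbitrary Fréchet space with continuous norm) to the forthcoming theorem that $\omega \oplus Y$ has the Invariant Subspace Property, using that $N_{j_0} \cong \omega$ is a closed $T$-``almost-invariant'' complemented piece. The main obstacle I anticipate is precisely the interaction of $T$ with the filtration: $T$ does \emph{not} preserve the $N_j$, only shifts the index, so one cannot directly quote invariance of $N_{j_0}$. The fix is to work with $M := \overline{\operatorname{span}}\{T^n x : n \ge 0\}$ for a cleverly chosen $x$ — if $x \in N_{j_0} \cong \omega$ then $M \cap N_{j_0}$ is an invariant subspace of a copy of $\omega$ (after checking $TM \subset M$ and that $M$ meets $N_{j_0}$ in something non-trivial), and one applies the $\omega$ result inside; if no such $x$ works, $N_{j_0}$ is mapped by some power of $T$ into a proper dense-or-not subspace and a dimension count on the finite-codimensional quotients $N_j/N_{j+1}$ yields an eigenvector-type invariant subspace. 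I would organize the write-up as: (1) reduce to $j_0 = 1$ and identify $\ker p_1 =: K$; (2) prove $K$ is closed, infinite-dimensional, isomorphic to $\omega$; (3) split into the cases $X/K$ finite- vs. infinite-dimensional; (4) in the first case show $X \cong \omega$ or $\omega \oplus \C^d$ and quote Köber–Shields; (5) in the second, realize $X \cong \omega \oplus Y$ with $Y$ carrying a continuous norm and defer to the ($\omega \oplus Y$)-theorem. The delicate point requiring the most care is step (2)–(4): verifying that a decreasing chain of closed finite-codimensional subspaces with trivial intersection, in a Fréchet space, really does split off a copy of $\omega$, which uses the open mapping theorem to produce continuous projections onto the finite-dimensional gaps.
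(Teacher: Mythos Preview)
Your structural observation is correct and is exactly what the paper records \emph{after} proving the theorem: a Fr\'echet space without continuous norm satisfies the hypotheses if and only if it is isomorphic to $\omega\oplus Y$ with $Y$ admitting a continuous norm. But your plan to \emph{use} this identification is circular. In step~(5) you ``defer to the $(\omega\oplus Y)$-theorem''; in the paper that statement is a \emph{corollary} of the present theorem, not an input to it. And the K\"orber--Shields result you quote handles only $\omega$ itself, not $\omega\oplus Y$: since $T$ does not preserve $N_{j_0}\cong\omega$ (continuity only gives $T(N_{j+1})\subset N_j$, a one-step shift in the filtration), you cannot restrict $T$ to an operator on $\omega$ and invoke their theorem. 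Your attempted fix---looking at $M\cap N_{j_0}$ for $M=\overline{\operatorname{span}}\,\mathrm{Orb}(x,T)$---fails for the same reason: $M$ is $T$-invariant but $M\cap N_{j_0}$ need not be, and there is no operator on $N_{j_0}$ to which one could apply the $\omega$ result. The ``dimension count'' alternative you gesture at is not an argument.

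The paper's proof does not pass through any isomorphism with $\omega$ and does not cite K\"orber--Shields; it is a direct, self-contained construction. After normalizing so that $p_j(Tx)\le C_j p_{j+1}(x)$, it builds a Schauder basis $(u_n)$ of $\ker p_1$ out of complements $M_j$ of $\ker p_{j+1}$ in $\ker p_j$, and studies the sets
\[
A_l=\{n:\ P_{M_1}T^lu_n\ne 0\ \text{and}\ P_{M_1}T^{l'}u_n=0\ \text{for all }l'<l\},
\]
each of which is finite because $T^l$ maps $\ker p_{l+2}$ into $\ker p_2\subset\ker P_{M_1}$. If only finitely many $A_l$ are nonempty, some $u_n$ has its entire orbit in $\ker P_{M_1}$, so $\overline{\operatorname{span}}\,\mathrm{Orb}(u_n,T)$ is a proper invariant subspace. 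Otherwise one extracts vectors $x^{(l)}\in\ker p_1$ with $P_{M_1}T^{l'}x^{(l)}=0$ for $l'<l$ and $P_{M_1}T^{l}x^{(l)}\ne 0$, arranges (by subtracting off multiples) that any series $\sum_l\beta_lx^{(l)}$ converges, then chooses a functional $\varphi$ supported on $M_1$ with $\varphi(T^lx^{(l)})\ne 0$ and solves recursively for $(\beta_l)$ so that $x=\frac{\alpha_1}{\alpha_0}u_0-u_1+\sum_l\beta_lx^{(l)}$ lies in the nonzero invariant subspace $\{x:\varphi(T^lx)=0\ \forall l\}$. The key mechanism you are missing is precisely this use of the one-step shift $T(\ker p_{j+1})\subset\ker p_j$ to make each $A_l$ finite, which replaces any appeal to the global structure of $\omega$.
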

\begin{proof}
Let $T$ be a continuous operator on $X$. Since $X$ does not possess a continuous norm, we can deduce that for every $j$, $\ker p_j$ is an infinite-dimensional subspace. Moreover, without loss of generality, we can assume that $\ker p_2$ is at least of codimension $2$ in $\ker p_1$, that $\ker p_{j+1}$ is at least of codimension $1$ in $\ker p_{j}$ for every $j\ge 1$ and that for every $j\ge 1$, there exists a constant $C_j>0$ such that for every $x\in X$, $p_j(Tx)\le C_j p_{j+1}(x)$.

Let $M_j$ be a complemented subspace of $\text{ker}\, p_{j+1}$ in $\text{ker}\, p_j$. Since $M_j$ is finite-dimensional, there exists a basis $(e^j_n)_{1\le n\le n_j}$ of $M_j$. The family $\{(e^j_n)_{1\le n\le n_j}\}_{j\ge 1}$ forms a Schauder basis of $\text{ker}\, p_1$. Indeed, if we consider $x\in \ker p_1$, we know that $x=\sum_{n=1}^{n_1}\alpha^{1}_n e^{1}_n+z_2$ where $z_2\in \text{ker}\, p_2$. Moreover, $z_2=\sum_{n=1}^{n_2}\alpha^{2}_n e^{2}_n+z_3$ where $z_3\in \text{ker}\, p_3$ and so on. We deduce that $x=\sum_{j=1}^{\infty }\sum_{n=1}^{n_j}\alpha^{j}_n e^{j}_n$ where the convergence is obvious. Moreover the decomposition is unique because if we have $\sum_{j=1}^{\infty }\sum_{n=1}^{n_j}(\alpha^{j}_n-\beta^{j}_n)e^j_n=0$ then
\[p_2\Big(\sum_{j=1}^{\infty }\sum_{n=1}^{n_j}(\alpha^{j}_n-\beta^{j}_n)e^{j}_n\Big)=p_2\Big(\sum_{n=1}^{n_1}(\alpha^{1}_n-\beta^{1}_n)e^1_n\Big)=0\]
and thus $\sum_{n=1}^{n_1}(\alpha^{1}_n-\beta^{1}_n)e^1_n\in \ker p_2\cap M_1=\{0\}$. Hence, we get $\alpha^1_n= \beta^1_n$ for every $1\le n\le n_1$. By repeating this argument, we obtain the uniqueness of the decomposition. We denote by $(u_n)_{n\ge 0}$ this Schauder basis. In particular, since $\ker p_2$ is at least of codimension $2$ in $\ker p_1$, we have $u_0$, $u_1\in M_1$.

Let $P_{M_1}:\ker p_1 \to M_1$ be the projection of $\ker\, p_1$ along $\ker p_2$ onto $M_1$.
Since $T(\ker p_{j+1})\subset \ker p_j$, we remark that for every $x\in \ker p_1$, if $P_{M_1}x=0$ then $x\in \ker p_2$ and thus $Tx\in \ker p_1$. We can thus let for every $l\ge 0$
\[A_{l}=\{n\in \mathbb{Z}_+: P_{M_1}T^lu_n\ne 0 \quad \text{and}\quad P_{M_1}T^{l'}u_n= 0\ \text{for every $0\le l'<l$}\}\] In particular, we have $A_0=\{n\in \mathbb{Z}_+:u_n\in M_1\}$ and each set $A_l$ is finite since for every $n \in \mathbb{Z}_+$,
\[p_2(T^lu_n)\le \left(\prod_{j=2}^{l+1}C_j\right) p_{l+2}(u_n)\]
and $\{n\in \mathbb{Z}_+: p_{l+2}(u_n)=0\}$ is cofinite.\\

There are therefore two possibilities:
\begin{enumerate}
\item If there exists $l_0\ge 1$ such that $A_l$ is empty for every $l\ge l_0$, then there exists a positive integer $n\notin\bigcup_{l\ge 0}A_l$ and by definition of sets $A_l$, we deduce that $P_{M_1}T^lu_n=0$ for every $l\ge 0$. Therefore, the subspace given by $\overline{\text{span}}\,\text{Orb}(u_n,T)$ is a non-trivial invariant subspace for $T$ since for every $x\in \text{Orb}(u_n,T)$, $P_{M_1}x=0$.

\item On the other hand, if there exists an increasing sequence $(l_m)_{m\ge 1}$ such that $A_{l_m}$ is a non-empty set for every $m\ge 1$, then we can show that there exists a non-zero continuous and linear map $\varphi:X\to \mathbb{K}$ such that the closed subspace $M:=\{x\in X: \varphi(T^lx)=0 \quad\text{for every $l\ge 0$}\}$ is a non-trivial invariant subspace. This is obvious that $M$ is invariant and that $M\ne X$. It remains to prove that $M\backslash\{0\}$ is non-empty for a good choice of $\varphi$. 

We first remark that there exists a sequence $(x^{(l)})_{l\ge 1}\subset \ker p_1$ such that for every $0\le l'<l$, $P_{M_1}T^{l'}x^{(l)}= 0$ and $P_{M_1}T^lx^{(l)}\ne 0$. Indeed, if $l\le l_{m}$ and $n\in A_{l_{m}}$, it suffices to consider $x^{(l)}=T^{l_{m}-l}u_n$. In particular, since $P_{M_1}x^{(l)}=0$, we know that $x^{(l)}_0=x^{(l)}_1=0$ where $x^{(l)}=\sum_{j=0}^{\infty}x^{(l)}_ju_j$. Moreover, we can assume that for every $j\ge 2$, $(x^{(l)}_j)_l$ is ultimately equal to~$0$.
Indeed, if $(x^{(l)}_2)_l$ is not ultimately equal to $0$, there exists an increasing sequence $(m_k)$ such that $x^{(m_k)}_2\ne 0$ and therefore for every $l\ge 1$, if $l< m_{k}$, it suffices to replace $x^{(l)}$ by $x^{(l)}-\frac{x^{(l)}_2}{x^{(m_{k})}_2}x^{(m_{k})}$. By repeating this argument, we can finally obtain a sequence $(x^{(l)})$ such that $(x^{(l)}_j)_l$ is ultimately equal to~$0$ for every $j\ge 2$ and such that for every $l'<l$, $P_{M_1}T^{l'}x^{(l)}= 0$ and $P_{M_1}T^lx^{(l)}\ne 0$. In particular, for every sequence $(\beta_l)_{l\ge 1}$, the series $\sum_{l\ge 1}\beta_l x^{(l)}$ belongs to $X$.

We now consider a map $\varphi:\ker p_1\to \mathbb{K}$ defined by \[\varphi\Big(\sum_{n\ge 0}x_n u_n\Big)= \sum_{n\in M_1} \alpha_n x_n\] where $\alpha_0$ is a non-zero scalar and $(\alpha_n)_{n\in M_1}$ are chosen such that $\varphi(T^lx^{(l)})\ne 0$ for every $l\ge 1$. Such a map exists since for every $l\ge 1$, we have $P_{M_1}T^lx^{(l)}\ne 0$. By the Hahn-Banach theorem, the linear map $\varphi$ can then be continuously extended to $X$ and we can construct by induction on $l$ a sequence $(\beta_l)_{l\ge 1}$ so that for every $l\ge 1$
\[\varphi\big(\frac{\alpha_1}{\alpha_0}T^lu_0-T^lu_1+\sum_{j=1}^l\beta_j T^lx^{(j)}\big)=0.\]

Therefore, the vector $x=\frac{\alpha_1}{\alpha_0}u_0-u_1+\sum_{j=1}^{\infty}\beta_jx^{(j)}$ (which is convergent) belongs to $M\backslash\{0\}$ since for every $l\ge 0$, $\varphi(T^lx^{(j)})=0$ for every $j>l$ and thus \[\varphi(T^lx)=\varphi(T^lu_0-\frac{\alpha_0}{\alpha_1}T^lu_1+\sum_{j=1}^{l}\beta_jT^lx^{(j)})=0.\]
\end{enumerate}
\end{proof}

The result of Körber~\cite{Korber} and Shields~\cite{Shields} concerning the space $\omega$ directly follows from Theorem~\ref{omega}. We can in fact remark that a Fréchet space without continuous norm $(X,(p_j))$ satisfies the assumptions of Theorem~\ref{omega} if and only if $X$ is isomorphic to $\omega\oplus Y$ where $Y$ is a Fréchet space with continuous norm. Indeed, it is clear that if $Y$ is a Fréchet space with continuous norm then $\omega\oplus Y$ satisfies the assumptions of Theorem~\ref{omega}. On the other hand, if $X$ is a Fréchet space without continuous norm satisfying the conditions of Theorem~\ref{omega} then $\ker p_{j_0}$ is isomorphic to $\omega$ (see \cite[Proposition 26.16]{Meise}) and thus $X$ is isomorphic to $\omega \oplus Y$ where $Y$ is a Fréchet space with continuous norm.

\begin{cor}
Let $Y$ be a Fréchet space with a continuous norm. Then $\omega\oplus Y$ satisfies the Invariant Subspace Property.
\end{cor}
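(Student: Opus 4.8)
The plan is to deduce the corollary directly from Theorem~\ref{omega} together with the standard description of $\omega$ by its canonical seminorms. First I would fix on $\omega \oplus Y$ a natural non-decreasing separating sequence of seminorms. Writing $q_j(x) = \max_{n\le j}|x_n|$ for the canonical seminorms on $\omega$ and $(r_j)_j$ for a non-decreasing separating sequence of seminorms inducing the topology of $Y$ (with $r_1$ already a norm, since $Y$ has a continuous norm), I would set $p_j(x,y) = \max\{q_j(x), r_j(y)\}$ for $(x,y)\in\omega\oplus Y$. This is clearly non-decreasing and separating, and it induces the product topology, so $(\omega\oplus Y,(p_j)_j)$ is a Fr\'echet space.

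Next I would check the two hypotheses of Theorem~\ref{omega}. The space $\omega\oplus Y$ has no continuous norm: indeed $\ker p_j = \ker q_j \times \{0\}$ (here we use that $r_1$, and hence every $r_j$, is a norm on $Y$, so the $Y$-component of any element of $\ker p_j$ must vanish), and $\ker q_j = \{x\in\omega : x_1=\dots=x_j=0\}$ is infinite-dimensional for every $j$, so no $p_j$ is a norm. Moreover, for every $j\ge 1$ we have $\ker p_{j+1} = \ker q_{j+1}\times\{0\}$, which has codimension exactly $1$ in $\ker p_j = \ker q_j\times\{0\}$, the quotient being spanned by the class of the $(j+1)$-st canonical basis vector. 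In particular $\ker p_{j+1}$ is a subspace of finite codimension in $\ker p_j$ for every $j$ (so we may take $j_0=1$), and Theorem~\ref{omega} applies and yields that $\omega\oplus Y$ satisfies the Invariant Subspace Property.

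There is essentially no obstacle here: the only point requiring a moment's care is the verification that $Y$ having a continuous norm forces the $Y$-component of $\ker p_j$ to be trivial, which is exactly why the hypothesis on $Y$ is needed and why $\omega\oplus Y$ is genuinely "all of its non-normability concentrated in the $\omega$ factor." The rest is a direct bookkeeping check against the statement of Theorem~\ref{omega}. Alternatively, as already noted in the discussion following the theorem, one could invoke \cite[Proposition 26.16]{Meise} to recognize $\ker p_{j_0}\cong\omega$ and run the argument in reverse, but since we only need the implication \emph{from} the hypotheses of Theorem~\ref{omega}, the explicit seminorm computation above is the shortest route.
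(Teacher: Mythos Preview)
Your proposal is correct and follows exactly the approach the paper intends: the corollary is stated immediately after the remark that ``it is clear that if $Y$ is a Fr\'echet space with continuous norm then $\omega\oplus Y$ satisfies the assumptions of Theorem~\ref{omega},'' and you have simply made this claim explicit by writing down the natural product seminorms and verifying that each $\ker p_{j+1}$ has codimension one in $\ker p_j$. There is nothing to add; your argument is a faithful unpacking of what the paper leaves implicit.
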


We can then deduce from Theorem~\ref{omega} that the Invariant Subspace Property is not equivalent to the Hereditary Invariant Subspace Property and that the Invariant Subset Property is not equivalent to the Hereditary Invariant Subset Property.

\begin{cor}
There exist a separable infinite-dimensional Fréchet space satisfying the Invariant Subspace Property but no satisfying the Hereditary Invariant Subset Property.
\end{cor}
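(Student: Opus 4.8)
The plan is to take $X:=\omega\oplus s$, where $s$ is the Schwartz space of rapidly decreasing functions. First I would record that $s$ is a separable infinite-dimensional Fr\'echet space carrying a continuous norm, so that $X$ is a separable infinite-dimensional Fr\'echet space without continuous norm. By the Corollary preceding the statement (equivalently, by Theorem~\ref{omega}, whose hypotheses $\omega\oplus Y$ meets whenever $Y$ has a continuous norm, as noted in the remark following that theorem), $X$ satisfies the Invariant Subspace Property.

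It remains to check that $X$ does not satisfy the Hereditary Invariant Subset Property, for which it suffices to exhibit a single closed infinite-dimensional subspace of $X$ that fails the Invariant Subset Property. The natural candidate is $M:=\{0\}\oplus s$. This subspace is the kernel of the canonical projection $X\to\omega$, hence closed; it is infinite-dimensional; and its subspace topology coincides with the topology of $s$, so $M$ is isomorphic to $s$ as a Fr\'echet space. By Goli\'nski's result \cite{Golinski2}, there is a continuous operator on $s$ possessing no non-trivial invariant subset. Transporting this operator through the isomorphism $M\cong s$ produces a continuous operator on $M$ with no non-trivial invariant subset, so $M$ fails the Invariant Subset Property and therefore $X$ fails the Hereditary Invariant Subset Property.

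I expect no real obstacle here: the only substantial external input is Goli\'nski's theorem on $s$, and everything else is bookkeeping (verifying the hypotheses of Theorem~\ref{omega} for $\omega\oplus s$, and identifying the subspace topology on $\{0\}\oplus s$). The one point worth stating explicitly is that the word \emph{hereditary} only demands the existence of one bad closed infinite-dimensional subspace, which is precisely what the summand $\{0\}\oplus s$ supplies. If one wishes to avoid invoking Goli\'nski's specific construction on $s$, the identical argument goes through with $s$ replaced by any separable infinite-dimensional Fr\'echet space with a continuous norm that is known to fail the Invariant Subset Property; in particular the statement will also follow a posteriori from the example on $H(\mathbb{C})$ obtained later in the paper.
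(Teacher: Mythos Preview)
Your argument is correct and follows the same template as the paper's proof: pick $X=\omega\oplus Y$ with $Y$ a Fr\'echet space having a continuous norm, invoke Theorem~\ref{omega} (via the corollary) to get the Invariant Subspace Property, and then observe that the closed summand $\{0\}\oplus Y$ fails the Invariant Subset Property. The only difference is the choice of $Y$: the paper takes $Y=\ell_1$ and cites Read~\cite{Read3}, whereas you take $Y=s$ and cite Goli\'nski~\cite{Golinski2}. Both choices work; the paper's has the minor advantage of relying on an older and more self-contained Banach-space result, while yours stays within the non-normable Fr\'echet setting emphasized in the introduction.
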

\begin{proof}
We deduce from Theorem~\ref{omega} that $\omega \oplus \ell_1$  satisfies the Invariant Subspace Property but does not satisfy the Hereditary Invariant Subset Property since $\ell_1$ does not satisfy the Invariant Subset Property \cite{Read3}.
\end{proof}

Thanks to Atzmon~\cite{Atzmon, Atzmon2} and Goli\'{n}ski \cite{Golinski1, Golinski2}, we know several examples of non-normable Fréchet spaces which do not satisfy the Invariant Subset Property or the Invariant Subspace Property. In the next section, we investigate sufficient conditions for having no non-trivial invariant subset by trying to take the best advantage of the non-normability of non-normable Fréchet spaces.

\section{Fréchet spaces not satisfying the (Hereditary) Invariant Subset Property}\label{Sec2}

In order to prove that a Fréchet space $X$ does not satisfy the Invariant Subset Property, we have to be able to construct an operator on $X$ for which each non-zero vector is hypercyclic, i.e. for which each non-zero vector $x\in X$ has a dense orbit. Our construction will be based on the construction done by Read for $\ell_1$ in \cite{Read3} and by Goli\'{n}ski for $s$ in \cite{Golinski2}. To this end, the existence of a Schauder basis satisfying suitable conditions will be necessary. We start by recalling some properties of Schauder basis in Fréchet spaces.

\subsection{Schauder basis in Fréchet spaces}\label{Sec21}

For a Banach space $(X,\|\cdot\|)$, it is well-known~(see \cite{Diestel}) that if $(e_n)_{n\ge 0}$ is a Schauder basis then the norm $ \vvvert\cdot \vvvert$ given by
\[ \vvvert x \vvvert=\sup_{N\ge 0}\Big\|\sum_{n=0}^{N}x_n e_n\Big\|\]
where $x=\sum_{n=0}^{\infty}x_n e_n$ is equivalent to $\|\cdot\|$. In particular, there exists a constant $C>0$ such that for every $M\le N$, for every $x_0,\dots,x_N\in \K$, 
\[\Big\|\sum_{n=0}^{M}x_n e_n\Big\|\le C\Big\|\sum_{n=0}^{N}x_n e_n\Big\|.\]
Indeed, if we let $x=\sum_{n=0}^{N}x_n e_n$, we have
\[\Big\|\sum_{n=0}^{M}x_n e_n\Big\|\le  \vvvert x \vvvert\le C \Big\|\sum_{n=0}^{N}x_n e_n\Big\|\]
where the last inequality comes from the equivalence between $ \vvvert \cdot \vvvert$ and $\|\cdot\|$.
This result can be adapted  to Schauder basis for Fréchet spaces.

\begin{theorem}[{\cite[Theorem 6 (p298)]{Jarchow}}]\label{Schauder}
Let $(X,(p_j)_j)$ be a Fréchet space with a Schauder basis $(e_n)_{n\ge 0}$. Then for every $j\ge 1$, there exist $C_j>0$ and $J\ge 1$ such that for every $M\le N$, for every $x_0,\dots,x_N\in \K$, 
\[p_j\Big(\sum_{n=0}^{M}x_n e_n\Big)\le C_j p_J\Big(\sum_{n=0}^{N}x_n e_n\Big).\]
\end{theorem}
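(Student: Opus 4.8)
The plan is to follow the Banach-space argument recalled just above as closely as possible. The key object is a second sequence of seminorms on $X$ built from the partial-sum operators of the basis: writing $x=\sum_{n\ge 0}x_n e_n$ for the unique expansion of $x\in X$ and $S_N x=\sum_{n=0}^N x_n e_n$, I would set
\[ q_j(x)=\sup_{N\ge 0}p_j(S_N x),\qquad j\ge 1. \]
Each $q_j$ is finite because the convergent series $\sum x_n e_n$ has bounded partial sums, each $q_j$ is a seminorm by linearity of $S_N$, the sequence $(q_j)_j$ is non-decreasing, and it is separating since $q_j(x)\ge p_j(x)$. So $(X,(q_j)_j)$ is a metrizable locally convex space whose topology is a priori finer than the original one, and the inequality in the statement is exactly the assertion that the identity map $\mathrm{id}\colon(X,(p_j)_j)\to(X,(q_j)_j)$ is continuous — equivalently, that the two topologies coincide.

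To derive this from the open mapping theorem I first need $(X,(q_j)_j)$ to be complete, i.e.\ a Fréchet space; this is the technical heart of the proof and the step I expect to be the main obstacle. Given a $q$-Cauchy sequence $(y^{(k)})_k$, it is also $p$-Cauchy (as $q_j\ge p_j$), hence converges to some $y\in X$ in the original topology, and the point is to upgrade this to $q$-convergence. For each fixed $N$ the sequence $(S_N y^{(k)})_k$ is $p_j$-Cauchy for every $j$, since $p_j(S_N(y^{(k)}-y^{(l)}))\le q_j(y^{(k)}-y^{(l)})$ uniformly in $N$; hence it converges inside the finite-dimensional, therefore closed, subspace $\text{span}\{e_0,\dots,e_N\}$. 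Calling the limit $t_N$ and using the relation $S_M S_N=S_M$ for $M\le N$, one checks that $S_M t_N=t_M$, so the $t_N$ are the partial sums of a single expansion $\sum_n c_n e_n$; a standard double-limit estimate — combining the $N$-uniform bound $p_j(S_N(y^{(k)}-y^{(l)}))<\varepsilon$ with the ordinary convergence of $\sum_n y^{(K)}_n e_n$ for one fixed large $K$ — shows that $(t_N)_N$ is $p$-Cauchy. Hence $t:=\sum_n c_n e_n\in X$, its partial sums are $S_N t=t_N$, and one last use of the same uniform bound gives $q_j(y^{(k)}-t)=\sup_N p_j(S_N y^{(k)}-t_N)\le\varepsilon$ for $k$ large. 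Thus $(X,(q_j)_j)$ is a Fréchet space. (Equivalently, one may run this argument on the isomorphic sequence space of all scalar sequences $(a_n)$ for which $\sum a_n e_n$ converges in $X$, with seminorms $\sup_N p_j(\sum_{n=0}^N a_n e_n)$.)

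With completeness in hand the rest is formal. The identity map $\mathrm{id}\colon(X,(q_j)_j)\to(X,(p_j)_j)$ is a continuous linear bijection between Fréchet spaces — continuous because $p_j(x)=\lim_N p_j(S_N x)\le q_j(x)$ — so by the open mapping theorem its inverse is continuous as well: for every $j\ge 1$ there are $C_j>0$ and $J\ge 1$ with $q_j(x)\le C_j\,p_J(x)$ for all $x\in X$. Finally, given $M\le N$ and scalars $x_0,\dots,x_N$, apply this to $x=\sum_{n=0}^{N}x_n e_n\in X$, whose expansion has all coefficients of index greater than $N$ equal to $0$; then
\[ p_j\Big(\sum_{n=0}^{M}x_n e_n\Big)=p_j(S_M x)\le q_j(x)\le C_j\,p_J(x)=C_j\,p_J\Big(\sum_{n=0}^{N}x_n e_n\Big), \]
which is the desired inequality. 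The only genuinely non-routine ingredient is the completeness of $(X,(q_j)_j)$; the comparison of the two topologies and the extraction of $C_j$ and $J$ are immediate consequences of standard Fréchet-space functional analysis.
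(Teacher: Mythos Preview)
The paper does not give its own proof of this theorem; it simply cites the result from Jarchow and remarks, just before the statement, that it is the Fr\'echet-space adaptation of the Banach-space argument recalled there. Your proof is correct and is exactly that adaptation (and essentially the standard argument): defining the seminorms $q_j(x)=\sup_{N\ge 0}p_j(S_N x)$, showing that $(X,(q_j)_j)$ is complete, and then invoking the open mapping theorem to compare the two topologies.
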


In particular, if $(e_n)$ is a Schauder basis then the existence of a continuous norm can be expressed in terms of the basis $(e_n)$.

\begin{theorem}\label{Schauder2}
Let $(X,(p_j)_{j})$ be a Fréchet space with a Schauder basis $(e_n)_{n\ge 0}$.  Then $X$ possesses a continuous norm if and only if there exists $j\ge 1$ such that for every $n\ge 0$, $p_j(e_n)>0$,
\end{theorem}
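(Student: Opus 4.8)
The plan is to prove the two implications separately, the converse direction being the one requiring actual work.

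For the implication ``$X$ has a continuous norm $\Rightarrow$ there is $j$ with $p_j(e_n)>0$ for all $n$'', let $q$ be a continuous norm on $X$. Since the sequence $(p_j)$ is non-decreasing and induces the topology of $X$, continuity of the seminorm $q$ at $0$ yields some $j\ge 1$ and $C>0$ such that $q(x)\le C p_j(x)$ for every $x\in X$. As $(e_n)$ is a Schauder basis, each vector $e_n$ is non-zero, so $q(e_n)>0$ and hence $p_j(e_n)\ge q(e_n)/C>0$ for every $n\ge 0$, as desired.

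For the converse, assume that for some fixed $j\ge 1$ we have $p_j(e_n)>0$ for all $n\ge 0$. First I record that the coordinate functionals $e_n^{\ast}\colon X\to\K$ defined by $e_n^{\ast}\big(\sum_k x_k e_k\big)=x_n$ are continuous with an index independent of $n$: by Theorem~\ref{Schauder} applied to this fixed $j$ there are $C_j>0$ and $J\ge 1$ with $p_j\big(\sum_{k=0}^{M}x_k e_k\big)\le C_j p_J\big(\sum_{k=0}^{N}x_k e_k\big)$ for all $M\le N$, and writing $x_n e_n=\sum_{k=0}^{n}x_k e_k-\sum_{k=0}^{n-1}x_k e_k$ and applying the triangle inequality twice gives $|x_n|\,p_j(e_n)=p_j(x_n e_n)\le 2C_j\, p_J(x)$ for every $x=\sum_k x_k e_k\in X$; since $p_j(e_n)>0$ this reads $|e_n^{\ast}(x)|\le \frac{2C_j}{p_j(e_n)}\,p_J(x)$.

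Then I build a continuous norm by hand. Set
\[q(x)=\sum_{n\ge 0} c_n\,|e_n^{\ast}(x)|,\qquad\text{where } c_n=\frac{2^{-n}\,p_j(e_n)}{2C_j+1}.\]
By the previous estimate $c_n\,|e_n^{\ast}(x)|\le 2^{-n} p_J(x)$, so the series converges and $q(x)\le 2\,p_J(x)$ for every $x$; thus $q$ is a continuous seminorm (sub-additivity and homogeneity being inherited termwise from the $|e_n^{\ast}(\cdot)|$). Moreover $q$ is a norm: if $q(x)=0$ then $e_n^{\ast}(x)=0$ for every $n$, hence $x=\sum_{n\ge 0} e_n^{\ast}(x) e_n=0$ because $(e_n)$ is a Schauder basis. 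Therefore $X$ possesses a continuous norm. The only mildly delicate point is the uniformity (in $n$) of the index $J$ in the continuity bound for the coordinate functionals, which is precisely what Theorem~\ref{Schauder} supplies; the remaining steps are routine.
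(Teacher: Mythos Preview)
Your proof is correct. The forward direction is identical to the paper's, and for the converse you derive exactly the same coordinate-functional estimate $|e_n^{\ast}(x)|\,p_j(e_n)\le 2C_j\,p_J(x)$ from Theorem~\ref{Schauder}.

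The only difference is what you do with this estimate. The paper observes directly that $p_J$ is already a norm: if $x\ne 0$ then some coordinate $x_k\ne 0$, and the inequality forces $p_J(x)\ge \frac{|x_k|\,p_j(e_k)}{2C_j}>0$. You instead build a new continuous norm $q(x)=\sum_n c_n\,|e_n^{\ast}(x)|$ by hand. Your construction is correct, but unnecessary once the estimate is in place; the paper's route is shorter and yields the slightly sharper conclusion that one of the defining seminorms $p_J$ is itself a norm (which is in fact how the result is used later in the paper, where it is assumed without loss of generality that $p_1$ is a norm).
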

\begin{proof}
 If $X$ possesses a continuous norm $\|\cdot\|$  then there exists $j\ge 1$ and $C>0$ such that for every $x\in X$, $\|x\|\le C p_j(x)$. In particular, we get $p_j(e_n)>0$ for every $n\ge 0$.\\
We now assume that for every $n$, $p_j(e_n)>0$ in order to prove the other implication. By Theorem~\ref{Schauder}, there exist $C_j>0$ and $J\ge 1$ such that for every $x=\sum_{n=0}^{\infty}x_n e_n\in X$, for every $k\ge 0$,
\[|x_k|=\frac{p_j(\sum_{n=0}^k x_n e_n- \sum_{n=0}^{k-1} x_n e_n)}{p_j(e_k)}\le 2C_j \frac{p_J(x)}{p_j(e_k)}.\]
Since for every $x\ne 0$, there exists $k$ such that $x_k\ne 0$, we deduce that $p_J(x)>0$ for every $x\in X\backslash\{0\}$. In other words, $p_J$ is a continuous norm.
\end{proof}

\subsection{Invariant Subset Property}\label{Sec22}
In Section~\ref{Sec1}, we have mainly used the non-triviality of kernels of seminorms in order to deduce the existence of non-trivial invariant subspaces. In this section, we will use another particularity of non-normable Fréchet spaces concerning the continuity of operators. Indeed, if we consider a linear map $T$ on a Fréchet space $(X,(p_j)_{j})$, then $T$ is continuous if and only for every $j\ge 1$, there exists $C_j$ and $J\ge 1$ such that for every $x\in X$,
\[p_j(Tx)\le C_j p_J(x).\]
The main difference with Banach spaces relies on the fact that the above inequalities can involve two seminorms $p_j$ and $p_J$ which are not equivalent. In particular, it is possible that $T$ is continuous and that for every $R>0$, there exists $x$ such that 
\[p_j(Tx)> R p_j(x).\]
For instance, if we consider $X=H(\C)$ endowed with the norms $p_j(f)=\sup_{|z|\le j}|f(z)|$ and the derivative operator $D$, we have $p_j(Df)\le p_{j+1}(f)$ for every $f\in H(\C)$ and $p_j(Dz^n)=nj^{n-1}=\frac{n}{j}p_j(z^n)$ for every $n\ge 1$.

This particularity of non-normable Fréchet spaces is not used in \cite{Golinski1} since the considered operators satisfy $p_j(Tx)\le C_j p_j(x)$ for every $j\ge 1$. Due to this fact, Goli\'{n}ski can only consider Fréchet spaces with $\ell_1$-norms.  We investigate how this particularity can be used to construct operators without non-trivial invariant subset by only requiring conditions on the elements $p_j(e_n)$.

Given a finite family $(j,m)_{1\le j\le J, 1\le m\le M}$, we denote by $\next(j,m)$ the next element of the family $(j,m)_{j\le J, m\le M}$ endowed with the lexicographical order.

\begin{theorem}\label{technique}
Let $(X,(p_j)_{j})$ be a Fréchet space possessing a Schauder basis $(e_n)_{n\ge 0}$.
If $p_1(e_n)>0$ for every $n$ and if for every $\varepsilon>0$, every $C,M,N\ge 1$, every $K\ge J\ge 2$, there exist $(n_{j,m})_{j\le J, m\le M}\subset]N,\infty[$ with $n_{j,m}\ne n_{j',m'}$ for every $(j,m)\ne (j',m')$ and a sequence $(\alpha_{j,m})_{j\le J, m\le M}$ of non-zero scalars such that for every $j \le J$, every $m\le M$,
\begin{enumerate}
\item $p_{J-1}(\alpha_{1,m}e_{n_{1,m}})\le \varepsilon$;
\item $p_1(\alpha_{J,m}e_{n_{J,m}})\ge \frac{1}{\varepsilon}$;
\item for every $1\le l\le M$, if $\next^l(j,m)$ exists then for every $k< K$,
\[C p_k(\alpha_{\next^{l}(j,m)}e_{n_{\next^{l}(j,m)}})\le p_{k+1}(\alpha_{j,m}e_{n_{j,m}});\]
\end{enumerate}
then $X$ does not satisfy the Invariant Subset Property.
\end{theorem}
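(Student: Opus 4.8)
The plan is to build a Read-type operator $T$ on $X$ for which every non-zero vector is hypercyclic, thereby showing that $X$ has no non-trivial closed invariant subset. Following the scheme of Read~\cite{Read3} and Goli\'nski~\cite{Golinski2}, I would organize the basis $(e_n)_{n\ge0}$ into consecutive blocks and define $T$ by prescribing its action on the basis vectors within each block. The guiding idea of a Read construction is that one earmarks certain basis vectors $e_{b_i}$ as ``working vectors'' whose orbits under $T$ are forced, through the block structure, to sweep densely through a fixed dense countable set $\{f_i : i\ge1\}$ of target vectors; meanwhile $T$ is arranged to act as a weighted backward-type shift on the ``filler'' coordinates so that it remains continuous. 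The crucial point, exploiting non-normability, is that the estimates defining continuity of $T$, namely $p_j(Tx)\le C_j\, p_{j+1}(x)$, are allowed to lose a seminorm at each step, so the weights may grow; this is exactly what hypothesis~(3) of Theorem~\ref{technique} is tailored to supply, with the factor $p_{k+1}(\alpha_{j,m}e_{n_{j,m}})$ on the right controlling $C\, p_k$ of the ``next'' term on the left.

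The key steps, in order, are as follows. \textbf{Step 1: set-up.} Fix a dense sequence in $X$ and, using that $p_1(e_n)>0$ for all $n$ (so that $p_1$ is a norm on the span, by Theorem~\ref{Schauder2}), also fix dense sequences of finitely-supported vectors adapted to each seminorm. \textbf{Step 2: inductive choice of blocks.} Run an induction in which, at stage corresponding to a quadruple (or tuple) encoding a desired target $f$, a desired precision $\varepsilon$, and indices $J\le K$ for the seminorms $p_j$ to be handled, one invokes the hypothesis to obtain indices $(n_{j,m})_{j\le J,\,m\le M}$ beyond all previously used indices together with scalars $(\alpha_{j,m})$ satisfying (1)--(3). \textbf{Step 3: definition of $T$.} Use these data to define $T$ block by block: on the ``first layer'' $j=1$ the vectors $\alpha_{1,m}e_{n_{1,m}}$ are small (by~(1)), which lets one push them around freely; on the ``last layer'' $j=J$ the vectors $\alpha_{J,m}e_{n_{J,m}}$ are large in $p_1$ (by~(2)), which is what makes the orbit of a fixed working vector reach the targets; and~(3) is used to verify that the map just defined extends to a continuous linear operator on $X$, by checking $p_k(Tx)\le \text{(const)}\,p_{k+1}(x)$ on each block and then on all of $X$ via Theorem~\ref{Schauder}. \textbf{Step 4: hypercyclicity of every non-zero vector.} Given $0\ne x=\sum x_n e_n$, pick the least $n_0$ with $x_{n_0}\ne0$; show that for a suitable sequence of stages of the induction, $T^{k}x$ is, up to a controlled error measured in $p_j$, a scalar multiple of a working vector whose forward orbit was designed to approximate $f_i$ to precision $\varepsilon_i$; conclude $\overline{\text{Orb}(x,T)}=X$. \textbf{Step 5: conclusion.} A hypercyclic operator all of whose non-zero vectors are hypercyclic has no non-trivial closed invariant subset, so $X$ fails the Invariant Subset Property.

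I expect the main obstacle to be Step 3 together with the bookkeeping that links it to Step 4: one must define $T$ on each block so that simultaneously (a) the operator is continuous (this is where~(3) and the loss of a seminorm are essential, and where getting the constants to telescope correctly across blocks is delicate), (b) the forward orbit of the designated working vector in a given block, after the prescribed number of applications of $T$, lands within $\varepsilon$ of the target $f$ in the relevant seminorm $p_J$ — using~(2) to guarantee the necessary ``amplification'' — and (c) the ``spillover'' from earlier blocks does not destroy the approximation achieved in later blocks, which is controlled by~(1) forcing the newly introduced low-layer vectors to be $p_{J-1}$-small. Managing the interaction of these three requirements — essentially reproducing Read's original combinatorial miracle, but in the Fr\'echet setting where the seminorm index is an extra moving part — is the technical heart of the argument, and it is precisely what the excerpt defers to Section~\ref{last}.
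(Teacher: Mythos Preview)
Your outline captures the broad architecture---a Read-type operator built in blocks, continuity checked via a loss-of-seminorm estimate coming from hypothesis~(3), and hypercyclicity of every non-zero vector---and this is indeed the route the paper takes. But Step~4 as you describe it is not how the argument works, and the discrepancy is not cosmetic. The device ``pick the least $n_0$ with $x_{n_0}\ne 0$ and show $T^k x$ is close to a scalar multiple of a working vector'' does not handle an arbitrary $x$: there is no mechanism forcing the orbit of a general vector to collapse onto a single designated basis vector. What the paper does instead (following Read and Goli\'nski) is to normalize $p_1(x)=1$, truncate $x$ to $\pi_{t_n}x\in\operatorname{span}\{u_0,\dots,u_{t_n-1}\}$, and show that for infinitely many $n$ this truncation lies in a fixed \emph{compact} set $K_n$ (this is where $p_1$ being a norm is essential). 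By compactness one preselects a \emph{finite} family of polynomials $(P_{n,l})$ such that for every $y\in K_n$ some $P_{n,l}(T)y$ is close to $u_0$; combined with a net $(S_{n,w})$ approximating the polynomials needed to reach the target $z$, the products $Q_{n,k}=S_{n,w}P_{n,l}$ are built into $T$ so that $T^{c_n^k}$ acts like $Q_{n,k}(T)$ on $\operatorname{span}\{u_0,\dots,u_{t_n-1}\}$. One then writes $T^{c_n^k}x = T^{c_n^k}(\pi_{t_n}x) + T^{c_n^k}(x-\pi_{t_n}x)$ and controls the tail separately. Without this compact-set/polynomial-family machinery there is no way to make the approximation uniform over all non-zero $x$, and your sketch gives no substitute for it.

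A secondary point: your reading of conditions (1)--(3) is slightly off. Condition~(3) with $l=1$ does give the continuity estimate, but its real strength is that it holds for \emph{every} $1\le l\le M$, which is what controls $p_k(u_{j+r})$ in terms of $p_{k+2}(u_j)$ for all shifts $r\le M$---exactly the ``tail'' estimates (Lemma~\ref{tail}) needed to bound $T^{c_n^k}(x-\pi_{t_n}x)$. Conditions (1) and (2), together with (3), force $p_J(\alpha_{j,m}e_{n_{j,m}})\ge 1/\varepsilon$ for \emph{all} $(j,m)$ while $p_{J-1}(\alpha_{1,m}e_{n_{1,m}})\le\varepsilon$; these are used throughout the block-by-block construction of $(u_j)$ to meet a long list of numerical inequalities (smallness of $p_{N_n}(u_j)$ on certain ranges, largeness of $p_1(u_j)$ on others), not merely the two roles you identify. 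The paper's Section~\ref{last} is essentially a verification that, at each stage, one invocation of the hypothesis with suitably chosen $\varepsilon,C,J,K,M$ produces a block of $u_j$'s satisfying all the required inequalities simultaneously.
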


The proof of this theorem will be given in Section~\ref{last}. We focus here on the consequences of this result. In view of Theorem~\ref{Schauder2}, Theorem~\ref{technique} cannot be applied to Fréchet spaces without continuous norm. We can  also remark that if $X$ is a Banach space then the assumptions of Theorem~\ref{technique} are never satisfied. Indeed, we deduce from $(2)$ and $(3)$ that for every $m\le M$, every $j\le J$, 
\begin{equation*}
p_{J-j+1}(\alpha_{j,m} e_{n_{j,m}})\ge \frac{1}{\varepsilon}.
\end{equation*}
In particular, we get $p_{J}(\alpha_{1,1}e_{n_{1,1}})\ge \frac{1}{\varepsilon}$. Therefore, since $p_{J-1}(\alpha_{1,1}e_{n_{1,1}})\le \varepsilon$ by (1), we conclude that for every $J\ge 2$, the seminorms $p_J$ and $p_{J-1}$ are not equivalent and thus $X$ cannot be a Banach space.

Although Banach spaces cannot satisfy Theorem~\ref{technique}, we will see that a lot of Fréchet spaces with a continuous norm satisfy this theorem. In order to satisfy the assumptions of Theorem~\ref{technique}, it seems to be necessary to be able to find for every $j$, some elements $e_n$ such that the ratio between $p_{j+1}(e_n)$ and $p_{j}(e_n)$ is as large as desired and for which we have some control on the values of $p_l(e_n)$ for $l\le j$. The following corollary is based on this idea and give us new examples of Fréchet spaces with a continuous norm which do not satisfy the Invariant Subset Property.

\begin{cor}\label{cor gen}
Let $(X,(p_j)_{j})$ be a Fréchet space possessing a Schauder basis~$(e_n)_{n\ge 0}$. If $p_1(e_n)>0$ for every $n$ and if for every $j\ge 1$, there exists $C_j\ge 1$ such that for every $L,N\ge 0$, there exists $n\ge N$ such that
\[p_j(e_n) \le C_j p_1(e_n) \quad \text{and}\quad p_{j+1}(e_n)\ge L p_j(e_n),\]
then $X$ does not satisfy the Invariant Subset Property.
\end{cor}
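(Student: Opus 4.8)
The plan is to verify the hypotheses of Theorem~\ref{technique}. Fix $\varepsilon>0$, integers $C,M,N\ge 1$ and $K\ge J\ge 2$; we must exhibit pairwise distinct indices $(n_{j,m})_{j\le J,\,m\le M}$ in $]N,\infty[$ and non-zero scalars $(\alpha_{j,m})$ having properties (1)--(3) there. Write $x_{j,m}:=\alpha_{j,m}e_{n_{j,m}}$. Property (1) asks $p_{J-1}(x_{1,m})\le\varepsilon$, property (2) asks $p_1(x_{J,m})\ge1/\varepsilon$, and property (3) asks that, along the lexicographic order on the pairs, every $x_{j,m}$ \emph{dominates, with loss $C$ and a one-step upward shift of the seminorms,} each of the at most $M$ pairs following it, i.e.\ $Cp_k(x_{j',m'})\le p_{k+1}(x_{j,m})$ for all $k<K$ whenever $(j',m')=\next^{l}(j,m)$ with $1\le l\le M$. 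An elementary index check shows that such a following pair lies in block $j$ or in block $j+1$, so (3) only involves comparisons inside one block or between two consecutive blocks.

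The idea is to realise, for $1\le j\le J-1$, the vector $x_{j,m}$ as a scalar multiple of a basis element with a \emph{jump at level} $\ell_j:=J-j+1$: applying the hypothesis at the index $\ell_j-1\in\{1,\dots,J-1\}$ produces, for any prescribed $L$ and any prescribed lower bound on the index, an $n$ with $p_{\ell_j-1}(e_n)\le C_{\ell_j-1}p_1(e_n)$ and $p_{\ell_j}(e_n)\ge L\,p_{\ell_j-1}(e_n)$; for $j=J$ (where $\ell_J=1$) one simply uses any basis element, recalling $p_1(e_n)>0$. Since there are infinitely many valid $n$'s, they can be chosen pairwise distinct and larger than $N$. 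Such an $x_{j,m}$ is then comparable to $p_1(x_{j,m})$ on $p_1,\dots,p_{\ell_j-1}$ and at least $L$ times larger on $p_{\ell_j},\dots,p_K$ (with no control on the ratios among the latter). The point of letting the jump level \emph{decrease} from block to block is that it reconciles (1) and (2): block $1$ is forced by (1) to be tiny on $p_{J-1}$, yet its jump at level $J$ lets it be arbitrarily large on $p_J,\dots,p_K$, which is exactly what is needed to dominate block $2$; iterating, each block is tiny on an initial ``controlled'' segment of seminorms and huge above it, while the last block, whose controlled segment is empty, is free to be large already on $p_1$, as (2) requires.

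Concretely, I would first fix positive targets $s_{j,m}$ for $j\le J-1$ (intended to equal $p_{\ell_j-1}(x_{j,m})$), chosen so that $(s_{j,m})$ decreases with bounded multiplicative gaps along the lexicographic order and all $s_{1,m}\le\varepsilon$; the required decrease is exactly what the ``controlled-segment'' instances of (3) impose, and passing from the small block-$1$ targets to the large block-$J$ vectors creates no contradiction because no controlled-segment instance of (3) links block $J-1$ to block $J$ (block $J-1$ has controlled segment $\{p_1\}$ only). One then constructs the $n_{j,m}$ and $\alpha_{j,m}$ by \emph{downward} induction on the lexicographic position, so that when $x_{j,m}$ is chosen, every pair it must dominate is already fixed: for block $J$ one scales a basis element up so that $p_1\ge1/\varepsilon$ and the finitely many already-fixed inequalities of (3) hold; for $j\le J-1$ one picks the jump factor $L=L_{j,m}$ large enough that $L_{j,m}s_{j,m}$ exceeds $C\,p_{K-1}$ of every already-chosen later vector, then selects $e_{n_{j,m}}$ from the hypothesis with this $L$ and a fresh index $>N$, and finally sets $\alpha_{j,m}$ so that $p_{\ell_j-1}(x_{j,m})=s_{j,m}$ (whence (1) when $j=1$). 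Checking (3) then splits into the routine cases $k+1<\ell_j$, $k+1=\ell_j$, $k+1>\ell_j$: in the first case both vectors lie in controlled segments and the inequality reduces to the chosen decrease of the $s_{j,m}$ and the constants $C_{\ell_j-1}$; in the other two the right-hand side is at least $L_{j,m}s_{j,m}$, which was made large enough by construction.

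I expect the main obstacle to be precisely this last interplay: above its jump a basis element is completely uncontrolled (the hypothesis gives only a lower bound), yet in (3) a block-$j$ vector must dominate, shifted, block-$j$ and block-$(j+1)$ vectors whose own high seminorms are uncontrolled. The two devices that resolve it are (i) processing the pairs in reverse lexicographic order, so that the vectors to be dominated are already concrete numbers, and (ii) leaving the jump factors $L_{j,m}$ undetermined until that moment, so they can be taken larger than any finite quantity that has already appeared; one must also observe that enlarging $L_{j,m}$ never harms the controlled-segment inequalities, since those involve only the fixed constants $C_{\ell_j-1}$ and not $L_{j,m}$. Granting all of this, the hypotheses of Theorem~\ref{technique} are met and $X$ fails the Invariant Subset Property.
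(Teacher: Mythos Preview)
Your proposal is correct and follows essentially the same strategy as the paper's proof. Both arguments verify Theorem~\ref{technique} by assigning to block $j$ (for $1\le j\le J-1$) basis elements chosen via the hypothesis at index $J-j$, so that each $x_{j,m}$ has its seminorms $p_1,\dots,p_{J-j}$ mutually comparable (up to $C_{J-j}$) and a large jump from $p_{J-j}$ to $p_{J-j+1}$; block $J$ is handled by pure scaling. Both proofs process the pairs in reverse lexicographic order so that the uncontrolled high seminorms of the vectors to be dominated are already concrete numbers when the current jump factor is chosen, and both reduce the ``low-index'' instances of (3) to inequalities among pre-chosen scales governed by the constants $C_{J-j}$. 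The only difference is organisational: the paper fixes the scalars $\alpha_{j,m}$ through explicit recursive equations (e.g.\ $p_1(\alpha_{j,m}e_{n_{j,m}})=C\,p_{J-j}(\alpha_{\text{next}(j,m)}e_{n_{\text{next}(j,m)}})$) and then carries out a lengthy case-by-case verification, whereas you first fix abstract targets $s_{j,m}$ and then fit the vectors to them; this is a cosmetic repackaging of the same construction.
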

\begin{proof}
Let $\varepsilon>0$, $C,M,N\ge 1$, $K\ge J\ge 2$ and $\Gamma=\max\{C_j:1\le j\le K\}$. We select different indices $(n_{J,m})_{m\le M}\subset ]N,\infty[$ and fix $\alpha_{J,m}>0$ for every $ m \le M$ such that 
\[p_1(\alpha_{J,M}e_{n_{J,M}})\ge \frac{1}{\varepsilon}\quad\text{and for every $m<M$}\quad p_1(\alpha_{J,m}e_{n_{J,m}})= C p_{K}(\alpha_{J,m+1}e_{n_{J,m+1}}).\]
Let $\eta=\frac{\varepsilon}{(\Gamma C)^{(J-2)M} C^{M-1}}$. We can then find by assumption  $(n_{J-1,m})_{m\le M}\subset ]\max_m\{n_{J,m}\},\infty[$ such that 
\[p_{2}(e_{n_{J-1,M}})>\frac{Cp_K(\alpha_{J,1}e_{n_{J,1}})}{\eta}p_1(e_{n_{J-1,M}})\]
and such that for every $m<M$
\[p_{2}(e_{n_{J-1,m}})> \frac{p_K(e_{n_{\next(J-1,m)}})}{p_1(e_{n_{\next(J-1,m)}})} p_1(e_{n_{J-1,m}}).\]
We then fix $(\alpha_{J-1,m})_{m\le M}$ such that 
\[p_1(\alpha_{J-1,M}e_{n_{J-1,M}})=\eta\]
and such that for every $m<M$
\[p_1(\alpha_{J-1,m}e_{n_{J-1,m}})=C p_{1}(\alpha_{\next(J-1,m)}e_{n_{\next(J-1,m)}}).\]
Let $1\le j<J-1$. If $(n_{j+1,m})_{m\le M}$ has been chosen, we choose $(n_{j,m})_{m\le M}\subset ]\max_m\{n_{j+1,m}\},\infty[$ such that for every $m\le M$
\[p_{J-j+1}(e_{n_{j,m}})> \frac{p_K(e_{n_{\next(j,m)}})}{p_1(e_{n_{\next(j,m)}})} p_{J-j}(e_{n_{j,m}})\quad \text{and}\quad p_{J-j}(e_{n_{j,m}}) < \Gamma p_{1}(e_{n_{j,m}})\]
and we then fix $(\alpha_{j,m})_{m\le M}$ such that
\[p_1(\alpha_{j,M}e_{n_{j,M}})= C p_{J-j-1}(\alpha_{j+1,1}e_{n_{j+1,1}}).\]
and such that for every $m<M$
\[p_1(\alpha_{j,m}e_{n_{j,m}})= C p_{J-j}(\alpha_{\next(j,m)}e_{n_{\next(j,m)}}).\]
We can then show that the family  $\{\alpha_{j,m}e_{n_{j,m}}\}_{j\le J,m\le M}$ satisfies the assumptions of Theorem~\ref{technique}. Indeed,
by construction, we have $p_1(\alpha_{n_{J,m}}e_{n_{J,m}})\ge \frac{1}{\varepsilon}$ for every $m\le M$ since $p_1(\alpha_{J,M}e_{n_{J,M}})\ge \frac{1}{\varepsilon}$ and since for every $m<M$,
\[p_1(\alpha_{n_{J,m}}e_{n_{J,m}})=C p_{K}(\alpha_{J,m+1}e_{n_{J,m+1}})\ge p_1(\alpha_{J,m+1}e_{n_{J,m+1}}).\]

We can also show that for every $m\le M$, we have $p_{J-1}(\alpha_{1,m}e_{n_{1,m}})\le \varepsilon$. Let $m\le M$ and $j< J-1$. If $m<M$, we get
\[p_{J-j}(\alpha_{j,m}e_{n_{j,m}})\le \Gamma p_1(\alpha_{j,m}e_{n_{j,m}})= \Gamma C p_{J-j}(\alpha_{\next(j,m)}e_{n_{\next(j,m)}})\]
and if $m=M$, we get
\[p_{J-j}(\alpha_{j,M}e_{n_{j,M}})\le \Gamma p_1(\alpha_{j,M}e_{n_{j,M}}) = \Gamma  C p_{J-j-1}(\alpha_{j+1,1}e_{n_{j+1,1}}).\]
Therefore, for every $m\le M$, we deduce that
\begin{align*}
p_{J-1}(\alpha_{1,m}e_{n_{1,m}})&\le (\Gamma C)^{(J-3)M+M-m+1}p_{1}(\alpha_{J-1,1}e_{n_{J-1,1}})\\
&\le (\Gamma C)^{(J-3)M+M-m+1} C^{M-1}p_{1}(\alpha_{J-1,M}e_{n_{J-1,M}})\\
&\le \varepsilon.
\end{align*}

It remains to show that  for every $l\le M$, every $(j,m)$, every $1 \le k< K$ if $\next^l(j,m)$ exists then
\begin{align*}
p_{k+1}(\alpha_{j,m}e_{n_{j,m}})&\ge C p_{k}(\alpha_{\next^l(j,m)}e_{n_{\next^l(j,m)}}).
\end{align*}
We remark that for every $(j,m)\ne (J,M)$, for every $1 \le k\le K$,
\begin{enumerate}
\item if $k\le J-j$ and $m\ne M$, we have
\begin{align*}
p_{k}(\alpha_{j,m}e_{n_{j,m}})&\ge p_{1}(\alpha_{j,m}e_{n_{j,m}})\\
&\ge Cp_{J-j}(\alpha_{\next(j,m)}e_{n_{\next(j,m)}});
\end{align*}
\item if $k\le J-j$, $j\le J-2$ and $m=M$, we have
\begin{align*}
p_{k}(\alpha_{j,M}e_{n_{j,M}})&\ge p_{1}(\alpha_{j,M}e_{n_{j,M}})\\
&\ge  C p_{J-j-1}(\alpha_{j+1,1}e_{n_{j+1,1}});
\end{align*}
\item if $k> J-j$, we have
\[p_{k}(\alpha_{j,m}e_{n_{j,m}}) \ge C p_K(\alpha_{\next(j,m)}e_{n_{\next(j,m)}}).\]
Indeed, if $j\le J-2$, we have
\begin{align*}
p_{k}(\alpha_{j,m}e_{n_{j,m}})&\ge p_{J-j+1}(\alpha_{j,m}e_{n_{j,m}})\\
&\ge \frac{p_K(e_{n_{\next(j,m)}})}{p_1(e_{n_{\next(j,m)}})} p_{J-j}(\alpha_{j,m}e_{n_{j,m}})\\
&\ge C \frac{p_K(e_{n_{\next(j,m)}})}{p_1(e_{n_{\next(j,m)}})}p_{J-j-1}(\alpha_{\next(j,m)}e_{n_{\next(j,m)}})\\
&\ge C p_K(\alpha_{\next(j,m)}e_{n_{\next(j,m)}}).
\end{align*}
If  $j=J-1$ and $m\ne M$, we have
\begin{align*}
p_{k}(\alpha_{j,m}e_{n_{j,m}})&\ge p_{2}(\alpha_{j,m}e_{n_{j,m}})\\
&\ge \frac{p_K(e_{n_{\next(j,m)}})}{p_1(e_{n_{\next(j,m)}})} p_{1}(\alpha_{j,m}e_{n_{j,m}})\\
&\ge C \frac{p_K(e_{n_{\next(j,m)}})}{p_1(e_{n_{\next(j,m)}})}p_{1}(\alpha_{\next(j,m)}e_{n_{\next(j,m)}})\\
&\ge C p_K(\alpha_{\next(j,m)}e_{n_{\next(j,m)}}).
\end{align*}
If  $j=J-1$ and $m= M$, we have
\begin{align*}
p_{k}(\alpha_{J-1,M}e_{n_{J-1,M}})&\ge p_{2}(\alpha_{J-1,M}e_{n_{J-1,M}})\\
&\ge \frac{C p_K(\alpha_{J,1}e_{n_{J,1}})}{\eta} p_{1}(\alpha_{J-1,M}e_{n_{J-1,M}})\\
&\ge C p_K(\alpha_{J,1}e_{n_{J,1}})
\end{align*}
Finally, if $j=J$ and $m\ne M$, we have
\begin{align*}
p_{k}(\alpha_{J,m}e_{n_{J,m}})&\ge p_{1}(\alpha_{J,m}e_{n_{J,m}})\\
&\ge C p_{K}(\alpha_{\next(J,m)}e_{n_{\next(J,m)}}).
\end{align*}
\end{enumerate}
We deduce that for every $l\le M$, every $(j,m)$, every $1 \le k< K$ if $\next^l(j,m)$ exists then
\begin{align*}
p_{k+1}(\alpha_{j,m}e_{n_{j,m}})&\ge C p_{k}(\alpha_{\next^l(j,m)}e_{n_{\next^l(j,m)}})
\end{align*}
because
\begin{enumerate}
\item if $k<J-j$ and $\next^l(j,m)=(j,m')$ then
\begin{align*}
p_{k+1}(\alpha_{j,m}e_{n_{j,m}})&\ge C p_{J-j}(\alpha_{\next(j,m)}e_{n_{\next(j,m)}})\\
&\ge C^l p_{J-j}(\alpha_{\next^l(j,m)}e_{n_{\next^l(j,m)}})\\
&\ge C p_{k}(\alpha_{\next^l(j,m)}e_{n_{\next^l(j,m)}});
\end{align*}
\item if $k<J-j$ and $\next^l(j,m)=(j+1,m')$ then
\begin{align*}
p_{k+1}(\alpha_{j,m}e_{n_{j,m}})&\ge C^{M-m}p_{J-j}(\alpha_{j,M}e_{n_{j,M}})\\
&\ge C^{M-m+1} p_{J-j-1}(\alpha_{j+1,1}e_{n_{j+1,1}})\\
&\ge C^l p_{J-j-1}(\alpha_{\next^l(j,m)}e_{n_{\next^l(j,m)}})\\
&\ge C p_{k}(\alpha_{\next^l(j,m)}e_{n_{\next^l(j,m)}});
\end{align*}
\item if $k\ge J-j$, we have
\begin{align*}
p_{k+1}(\alpha_{j,m}e_{n_{j,m}})&\ge C p_K(\alpha_{\next(j,m)}e_{n_{\next(j,m)}})\\
&\ge C^l p_K(\alpha_{\next^l(j,m)}e_{n_{\next^l(j,m)}})\\
&\ge C p_k(\alpha_{\next^l(j,m)}e_{n_{\next^l(j,m)}}).
\end{align*}
\end{enumerate}
\end{proof}

An interesting family of Fréchet spaces with a Schauder basis is given by Köthe sequence spaces.

\begin{definition}
Let $A=(a_{j,k})_{j\ge 1,k\ge 0}$ be a matrix such that for every $k$, there exists $j$ satisfying $a_{j,k}>0$ and such that for every $k$, the sequence $(a_{j,k})_j$ is non-decreasing.
The Köthe sequence space $\lambda^p(A)$ with $1\le p<\infty$ is given by
\begin{align*}
\lambda^p(A)=\Big\{(x_k)_k\in \mathbb{K}^{\mathbb{N}} : p_j((x_k)_k)=\Big(\sum_{k=0}^{\infty}|x_ka_{j,k}|^p\Big)^{\frac 1 p}<\infty, \ j\ge 1\Big\}
\end{align*}
and the Köthe sequence space $c_0(A)$ is given by
\[c_0(A)=\{(x_k)_k\in \mathbb{K}^{\mathbb{N}}: \lim_{k\rightarrow \infty}|x_k|a_{j,k}=0, \ j\ge 1\} \text{ with } p_j((x_k)_k)= \max_k|x_k|a_{j,k}.\]
\end{definition}

\begin{example}
Let $(q_j)_{j\ge 1}$ be the increasing enumeration of prime numbers and $n\ge 0$. If $\prod_{l=1}^{\infty}q_l^{k_l}$ is the unique prime factorization of $n+1$ then we let $a_{j,n}=\prod_{l=1}^jq_l^{k_l}$. We can deduce from Corollary~\ref{cor gen} that the Fréchet space $X=\lambda^p(A)$ with $1\le p<\infty$ or $c_0(A)$ does not satisfy the Invariant Subset Property since for every $j\ge 1$, every $k\ge 1$, we have
\[ p_j(e_{q_{j+1}^k-1})=1= p_1(e_{q_{j+1}^k-1}) \quad \text{and}\quad p_{j+1}(e_{q_{j+1}^k-1})=q_{j+1}^k.\]
This space is an example of non-reflexive and non-normable Fréchet space without the Invariant Subset Property.
\end{example}

In \cite{Golinski2}, Goli\'{n}ski proved that $s=\lambda^1(A)$ (with $a_{j,k}=(k+1)^j$) satisfies the Invariant Subset property. However, this space does not satisfy the assumptions of Corollary~\ref{cor gen}. One can therefore wonder if the space $s$ satisfies the assumptions of Theorem~\ref{technique}. Actually a lot of natural Fréchet spaces, such as $s$ or $H(\mathbb{C})$, satisfy the assumptions of Theorem~\ref{technique} and thus do not satisfy the Invariant Subset Property. These examples will be obtained thanks to the following corollary.

\begin{cor}\label{cor Kot}
Let $(X,(p_j)_{j})$ be a Fréchet space possessing a Schauder basis~$(e_n)_{n\ge 0}$ such that $p_1(e_n)>0$ for every $n$ and let $R_{j,n}=\frac{p_{j+1}(e_n)}{p_{j}(e_n)}$ for every $j\ge 1$, $n\ge 0$. If 
\begin{itemize}
\item $(R_{j,n})_j$ and $(R_{j,n})_n$ are non-decreasing sequences such that for every $j\ge 1$, $(R_{j,n})_n$ tends to infinity,
\item for every $j\ge 1$, every $n\ge 0$, 
$R_{j+1,n}\le R_{j,n+1}$,
\item for every $j\ge 1$
\[0<\inf_n \frac{p_j(e_n)}{p_{j}(e_{n+1})}\le \sup_n \frac{p_j(e_n)}{p_{j}(e_{n+1})}<\infty,\]
\end{itemize}
then $X$ does not satisfy the Invariant Subset Property.
\end{cor}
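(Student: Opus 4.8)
The plan is to verify the hypotheses of Theorem~\ref{technique} directly (recall that, as noted above, Corollary~\ref{cor gen} does not apply to spaces such as $s$). Fix $\varepsilon>0$, $C,M,N\ge1$ and $K\ge J\ge2$; the task is to produce pairwise distinct indices $(n_{j,m})_{j\le J,\,m\le M}\subset\,]N,\infty[$ and non-zero scalars $(\alpha_{j,m})$ satisfying (1)--(3). Two preliminary observations drive the construction. First, the two-sided comparability yields constants $0<d\le1\le D$ with $d\,p_j(e_n)\le p_j(e_{n+1})\le D\,p_j(e_n)$ for all $n$ and all $j\le K$, whence $R_{1,n+1}\le (D/d)\,R_{1,n}$. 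Second, iterating $R_{j+1,n}\le R_{j,n+1}$ gives $R_{j,n}\le R_{1,n+j-1}$, and combined with the first point this yields $R_{1,n}\le R_{j,n}\le (D/d)^{j-1}R_{1,n}$ for $j\le K$: every ratio of interest is squeezed between $R_{1,n}$ and a fixed multiple of it. Since $(R_{1,n})_n$ is non-decreasing and tends to infinity, we may make all the relevant products $\prod_i R_{i,n}$ as large as we wish while still controlling each of them by a fixed power of $R_{1,n}$.

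Now construct the data as follows. Choose the indices strictly decreasing along the lexicographic order, $n_{1,1}>n_{1,2}>\dots>n_{J,M}>N$, with the block-$1$ indices $n_{1,1},\dots,n_{1,M}$ consecutive integers, with all indices large enough that $R_{1,n}\ge C$ (hence $R_{k,n}\ge C$ for every $k<K$, as $(R_{j,n})_j$ is non-decreasing), and --- this is where $R_{1,n}\to\infty$ enters --- with $n_{1,M}$ large enough to pass a fixed threshold specified below. Put $\Gamma:=\max_{m}\prod_{i=1}^{J-2}R_{i,n_{1,m}}$, which by the squeezing above satisfies $\Gamma\le\kappa\,R_{1,n_{1,M}}^{\,J-2}$ for a constant $\kappa=\kappa(D,d,J,M)$. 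Define
\[
\sigma_1:=\frac{\varepsilon}{\Gamma},\qquad \sigma_{j+1}:=\frac{R_{1,n_{j,M}}}{C}\,\sigma_j\ \ (1\le j<J),\qquad \alpha_{j,m}:=\frac{\sigma_j}{p_1(e_{n_{j,m}})},
\]
so that $p_1(\alpha_{j,m}e_{n_{j,m}})=\sigma_j$ is constant on each block and jumps by the factor $R_{1,n_{j,M}}/C>1$ across each block boundary. Then (1) is immediate, since $p_{J-1}(\alpha_{1,m}e_{n_{1,m}})=\sigma_1\prod_{i=1}^{J-2}R_{i,n_{1,m}}\le\sigma_1\Gamma=\varepsilon$. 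Condition (2) reads $\sigma_J=\sigma_1\,C^{-(J-1)}\prod_{j=1}^{J-1}R_{1,n_{j,M}}\ge1/\varepsilon$, i.e. $\prod_{j=1}^{J-1}R_{1,n_{j,M}}\ge C^{J-1}\Gamma/\varepsilon^2$; placing the blocks $2,\dots,J$ in the $(J-1)M$ integers just below $n_{1,M}$, comparability gives $R_{1,n_{j,M}}\ge (d/D)^{(J-1)M}R_{1,n_{1,M}}$ for $j\ge2$, and together with $\Gamma\le\kappa R_{1,n_{1,M}}^{J-2}$ the required inequality holds as soon as $R_{1,n_{1,M}}$ exceeds a fixed number depending only on $\varepsilon,C,J,M,D,d$ --- the announced threshold. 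Finally, because $l\le M$ forward steps from any $(j,m)$ land within block $j$ or block $j+1$, condition (3) has only a within-block form and an across-one-boundary form; cancelling the common $\sigma$'s, using that the indices decrease (so $R_{i,n'}\le R_{i,n}$ for $n'<n$) and that $R_{1,n}\le R_{k,n}$, each instance reduces to an inequality already guaranteed, namely $R_{k,n_{j,m}}\ge C$ (the across-boundary instances in fact reduce to the trivial $R_{1,n_{j,m}}\le R_{k,n_{j,m}}$).

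The crux --- and the step I expect to be the real obstacle --- is the tension just made visible: (1) forces the first-block amplification $\prod_{i<J-1}R_{i,n_{1,m}}$ to be a controlled number, while (2) forces the $p_1$-norms to climb from $\varepsilon/\Gamma$ to $1/\varepsilon$ over merely $J-1$ boundaries, hence forces the level-$1$ ratios $R_{1,n_{j,M}}$ to be large; a priori, making a level-$1$ ratio large could drag the higher-level ratios, and thus $\Gamma$, out of control, breaking the first estimate. The hypothesis $R_{j+1,n}\le R_{j,n+1}$ together with the two-sided comparability is precisely what prevents this, by tying every $R_{j,n}$ ($j\le K$) to within a fixed factor of $R_{1,n}$, so that $\Gamma$ grows only polynomially in $R_{1,n_{1,M}}$ and the inequality $\prod_j R_{1,n_{j,M}}\ge C^{J-1}\Gamma/\varepsilon^2$ becomes solvable. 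The remaining verifications --- that $\,]N,\infty[$ has room for the required decreasing string of $JM$ indices above the threshold, and that the (3)-inequalities go through in the general case where $p_j(e_n)$ and $p_j(e_{n+1})$ are only comparable rather than equal --- are then routine bookkeeping.
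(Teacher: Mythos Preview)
Your argument is correct and takes a genuinely different route from the paper's proof. The paper chooses the indices \emph{increasing} in the lexicographic order, namely $n_{j,m}=N_0+(j-1)M+m-1$, and defines the $\alpha_{j,m}$ by a recursion that pins down the values $p_{K-1}(\alpha_{j,m}e_{n_{j,m}})$ (involving the top seminorm $p_K$ at each stage); the verification of~(3) then splits into many sub-cases according to whether $k$ lies below or above certain cut-offs and whether $\text{next}^l$ stays in the same block or not. Your construction instead takes the indices \emph{decreasing} in the lexicographic order and normalises so that $p_1(\alpha_{j,m}e_{n_{j,m}})=\sigma_j$ is constant on each block. Because $(R_{i,n})_n$ is non-decreasing, the decreasing-index choice makes the products $\prod_{i<k}R_{i,n_{\text{next}^l(j,m)}}$ automatically dominated by $\prod_{i<k}R_{i,n_{j,m}}$, so condition~(3) collapses to the single inequality $R_{k,n_{j,m}}\ge C$ within a block and to $R_{1,n_{j,M}}\le R_{k,n_{j,m}}$ across a block boundary --- both immediate from the hypotheses. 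The two approaches exploit the same structural fact (that $R_{j+1,n}\le R_{j,n+1}$ together with the two-sided comparability forces $R_{j,n}$ to be squeezed within a fixed factor of $R_{1,n}$, so that the constraint imposed by~(1) on $\Gamma$ cannot explode relative to the growth demanded by~(2)), but yours packages it more transparently: the choice of block-jump factor $\sigma_{j+1}/\sigma_j=R_{1,n_{j,M}}/C$ is exactly the largest jump compatible with~(3), and it is then visible at a glance that~(2) becomes a polynomial inequality in $R_{1,n_{1,M}}$ of degree $J-1$ against a degree-$(J-2)$ bound on $\Gamma$, hence solvable. One cosmetic point: your assertion $0<d\le1\le D$ is not forced by the hypotheses (the ratios $p_j(e_n)/p_j(e_{n+1})$ need not straddle~$1$), but this plays no role --- only $D/d\ge1$ is used.
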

\begin{proof}
Let $\varepsilon>0$, $C,M,N\ge 1$ and $K\ge J\ge 2$. There exist $\gamma<1<\Gamma$ such that for every $j\le K$, every $n\ge 0$,
\[\gamma<\frac{p_j(e_n)}{p_{j}(e_{n+1})}<\Gamma.\]
In particular, for every $k\ge 1$, every $j\le K$ every $n\ge 0$, we have
\[\gamma^k<\frac{p_j(e_n)}{p_{j}(e_{n+k})}<\Gamma^k\]
and thus for every $j<K$
\[R_{j,n}=\frac{p_{j+1}(e_n)}{p_j(e_n)}\ge  \frac{\gamma^k}{\Gamma^k}\frac{p_{j+1}(e_{n+k})}{p_j(e_{n+k})}=\frac{\gamma^k}{\Gamma^k}R_{j,n+k}.\]
We then consider $N_0$ such that for every $j\le K$, every $n\ge N_0$,
\[R_{j,n}> \frac{C^{J^2M}}{\varepsilon^2} \Big(\frac{\Gamma}{\gamma}\Big)^{J^2M}\]
and we let $n_{j,m}=N_0+(j-1)M+m-1$. We also let for every $m\le M$
\[\alpha_{1,m}=\frac{\varepsilon\prod_{j=J-1}^{K-2}R_{j,n_{1,1}}}{C^{m-1}p_{K-1}(e_{n_{1,m}})}\]
and for every $1<j\le J$
\[\alpha_{j,m}=\frac{p_K(\alpha_{j-1,1}e_{n_{j-1,1}})}{C^{(j-1)M+m-1}p_{K-1}(e_{n_{j,m}})}\]
Let $j \le J$ and $m\le M$.
\begin{enumerate}
\item We have 
\[p_{J-1}(\alpha_{1,m}e_{n_{1,m}})=\frac{\varepsilon \prod_{j=J-1}^{K-2}R_{j,n_{1,1}}}{C^{m-1}p_{K-1}(e_{n_{1,m}})}p_{J-1}(e_{n_{1,m}})= \frac{\varepsilon \prod_{j=J-1}^{K-2}R_{j,n_{1,1}}}{C^{m-1}\prod_{j=J-1}^{K-2}R_{j,n_{1,m}}}\le \frac{\varepsilon}{C^{m-1}}\le \varepsilon.\]
\item We have
\begin{align*}
p_1(\alpha_{J,m}e_{n_{J,m}})&=\frac{p_K(\alpha_{J-1,1}e_{n_{J-1,1}})}{C^{(J-1)M+m-1}p_{K-1}(e_{n_{J,m}})}p_1(e_{n_{J,m}})\\
&\ge \frac{1}{C^{JM}}\frac{p_K(\alpha_{J-1,1}e_{n_{J-1,1}})p_1(e_{n_{J,M}})}{p_{K-1}(e_{n_{J,M}})}\\
&=\frac{\prod_{j=J-1}^{K-2}R_{j,n_{1,1}}}{C^{JM}} \frac{\varepsilon p_K(e_{n_{1,1}})}{p_{K-1}(e_{n_{1,1}})}\Big(\prod_{i=2}^{J-1} \frac{p_K(e_{n_{i,1}})}{C^{(i-1)M}p_{K-1}(e_{n_{i,1}})}\Big)   \frac{p_1(e_{n_{J,M}})}{p_{K-1}(e_{n_{J,M}})}.
\end{align*}
We remark that 
\[\frac{p_K(e_{n_{1,1}})}{p_{K-1}(e_{n_{1,1}})}=R_{K-1,n_{1,1}}\ge \frac{\gamma^{n_{J,M}-n_{1,1}}}{\Gamma^{n_{J,M}-n_{1,1}}}R_{{K-1},n_{J,M}},\]
that 
\[\prod_{i=2}^{J-1} \frac{p_K(e_{n_{i,1}})}{p_{K-1}(e_{n_{i,1}})}=\prod_{i=2}^{J-1} R_{K-1,n_{i,1}}\ge \prod_{i=2}^{J-1} \frac{\gamma^{n_{J,M}-n_{i,1}}}{\Gamma^{n_{J,M}-n_{i,1}}} R_{K-1,n_{J,M}},\]
and that 
\[\frac{p_1(e_{n_{J,M}})}{p_{K-1}(e_{n_{J,M}})}= \prod_{i=1}^{K-2}\frac{1}{R_{i,n_{J,M}}}.\]
We conclude that
\begin{align*}
&p_1(\alpha_{J,m}e_{n_{J,m}})\\
&\quad\ge \frac{\varepsilon}{C^{J^2M}} \Big(\frac{\gamma^{J-1}}{\Gamma^{J-1}}\Big)^{n_{J,M}-n_{1,1}}(R_{K-1},n_{J,M})^{J-1}\Big(\prod_{j=J-1}^{K-2}R_{j,n_{1,1}}\Big)\prod_{i=1}^{K-2}\frac{1}{R_{i,n_{J,M}}}\\
&\quad\ge \frac{\varepsilon}{C^{J^2M}} \Big(\frac{\gamma^{J-1}}{\Gamma^{J-1}}\Big)^{n_{J,M}-n_{1,1}}(R_{K-1},n_{J,M})^{J-1} \Big(\frac{\gamma}{\Gamma}\Big)^{n_{J,M}-n_{1,1}}\Big(\prod_{j=J-1}^{K-2}R_{j,n_{J,M}}\Big)\prod_{i=1}^{K-2}\frac{1}{R_{i,n_{J,M}}}\\
&\quad\ge \frac{\varepsilon}{C^{J^2M}} \Big(\frac{\gamma^{J}}{\Gamma^{J}}\Big)^{n_{J,M}-n_{1,1}}R_{K-1},n_{J,M}\quad \text{since $(R_{j,n_{J,M}})_j$ is non-decreasing}\\
&\quad>\frac{1}{\varepsilon}
\end{align*}
\item Let $l\le M$ such that $\next^l(j,m)$ exists and $k< K$. We have several possibilities:
\begin{enumerate}
\item If $\next^l(j,m)=(j,m')$ and $j=1$ then
\begin{align*}
p_k(\alpha_{\next^{l}(j,m)}e_{n_{\next^{l}(j,m)}})&= \frac{p_{K-1}(\alpha_{\next^{l}(j,m)}e_{n_{\next^{l}(j,m)}})}{\prod_{i=k}^{K-2}R_{i,n_{\next^{l}(j,m)}}}\\
&=\frac{\varepsilon \prod_{j=J-1}^{K-2}R_{j,n_{1,1}}}{C^{m'-1}\prod_{i=k}^{K-2}R_{i,n_{\next^{l}(j,m)}}}
\end{align*}
and thus
\begin{align*}
\frac{p_k(\alpha_{\next^{l}(j,m)}e_{n_{\next^{l}(j,m)}})}{p_{k+1}(\alpha_{j,m}e_{n_{j,m}})}&\le \frac{p_k(\alpha_{\next^{l}(j,m)}e_{n_{\next^{l}(j,m)}})}{p_{k}(\alpha_{j,m}e_{n_{j,m}})}\\
&= \frac{\prod_{i=k}^{K-2}R_{i,n_{j,m}}}{C^{m'-m}\prod_{i=k}^{K-2}R_{i,n_{\next^{l}(j,m)}}}\\
&\le \frac{1}{C^{m'-m}}=\frac{1}{C^{l}} \quad \text{since $(R_{i,n})_n$ is non-decreasing}.
\end{align*}
\item If $\next^l(j,m)=(j,m')$ and $j>1$ then
\begin{align*}
p_k(\alpha_{\next^{l}(j,m)}e_{n_{\next^{l}(j,m)}})&= \frac{p_{K-1}(\alpha_{\next^{l}(j,m)}e_{n_{\next^{l}(j,m)}})}{\prod_{i=k}^{K-2}R_{i,n_{\next^{l}(j,m)}}}\\
&=\frac{p_K(\alpha_{j-1,1}e_{n_{j-1,1}})}{C^{(j-1)M+m'-1}\prod_{i=k}^{K-2}R_{i,n_{\next^{l}(j,m)}}}
\end{align*}
and thus
\begin{align*}
\frac{p_k(\alpha_{\next^{l}(j,m)}e_{n_{\next^{l}(j,m)}})}{p_{k+1}(\alpha_{j,m}e_{n_{j,m}})}&\le \frac{p_k(\alpha_{\next^{l}(j,m)}e_{n_{\next^{l}(j,m)}})}{p_{k}(\alpha_{j,m}e_{n_{j,m}})}\\
&= \frac{\prod_{i=k}^{K-2}R_{i,n_{j,m}}}{C^{m'-m}\prod_{i=k}^{K-2}R_{i,n_{\next^{l}(j,m)}}}\\
&\le \frac{1}{C^{m'-m}}=\frac{1}{C^{l}}.
\end{align*}
\item If $\next^l(j,m)=(j+1,m')$ and $j=1$ then 
\begin{align*}
p_k(\alpha_{\next^{l}(j,m)}e_{n_{\next^{l}(j,m)}})&=\frac{p_K(\alpha_{1,1}e_{n_{1,1}})}{C^{jM+m'-1}\prod_{i=k}^{K-2}R_{i,n_{\next^{l}(j,m)}}}\\
&=\frac{\varepsilon p_K(e_{n_{1,1}})\prod_{j=J-1}^{K-2}R_{j,n_{1,1}}}{C^{jM+m'-1}p_{K-1}(e_{n_{1,1}})\prod_{i=k}^{K-2}R_{i,n_{\next^{l}(j,m)}}}.
\end{align*}
Moreover, if $k<K-1$, we have
\[p_{k+1}(\alpha_{j,m}e_{n_{j,m}})= \frac{\varepsilon \prod_{j=J-1}^{K-2}R_{j,n_{1,1}}}{C^{m-1}\prod_{i=k+1}^{K-2}R_{i,n_{j,m}}}\]
 and thus
\begin{align*}
\frac{p_k(\alpha_{\next^{l}(j,m)}e_{n_{\next^{l}(j,m)}})}{p_{k+1}(\alpha_{j,m}e_{n_{j,m}})}&=
\frac{p_K(e_{n_{1,1}})\prod_{i=k+1}^{K-2}R_{i,n_{j,m}}}{C^{jM+m'-m} p_{K-1}(e_{n_{1,1}})\prod_{i=k}^{K-2}R_{i,n_{\next^{l}(j,m)}}}\\
&=\frac{R_{K-1,n_{1,1}}\prod_{i=k+1}^{K-2}R_{i,n_{j,m}}}{C^{jM+m'-m}\prod_{i=k}^{K-2}R_{i,n_{\next^{l}(j,m)}}}\le \frac{1}{C^{jM+m'-m}}=\frac{1}{C^l}
\end{align*}
because $R_{i+1,n_{j,m}}\le R_{i,n_{j,m}+1}\le R_{i,n_{\next^{l}(j,m)}}$.

On the other hand, if $k=K-1$, we have $p_{k+1}(\alpha_{j,m}e_{n_{j,m}})= \frac{\varepsilon R_{K-1,n_{j,m}}\prod_{j=J-1}^{K-2}R_{j,n_{1,1}}}{C^{m-1}}$ and thus
\begin{align*}
\frac{p_k(\alpha_{\next^{l}(j,m)}e_{n_{\next^{l}(j,m)}})}{p_{k+1}(\alpha_{j,m}e_{n_{j,m}})}&=
\frac{R_{K-1,n_{1,1}}}{C^{jM+m'-m}R_{K-1,n_{j,m}}}\\
&\le \frac{1}{C^{jM+m'-m}}=\frac{1}{C^l}.
\end{align*}
\item If $\next^l(j,m)=(j+1,m')$ and $j>1$ then 
\begin{align*}
p_k(\alpha_{\next^{l}(j,m)}e_{n_{\next^{l}(j,m)}})&=\frac{p_K(\alpha_{j,1}e_{n_{j,1}})}{C^{jM+m'-1}\prod_{i=k}^{K-2}R_{i,n_{\next^{l}(j,m)}}}\\
&=\frac{p_{K-1}(\alpha_{j,1}e_{n_{j,1}})R_{K-1,n_{j,1}}}{C^{jM+m'-1}\prod_{i=k}^{K-2}R_{i,n_{\next^{l}(j,m)}}}\\
&=\frac{p_K(\alpha_{j-1,1}e_{n_{j-1},1})R_{K-1,n_{j,1}}}{C^{jM+m'-1}C^{(j-1)M}\prod_{i=k}^{K-2}R_{i,n_{\next^{l}(j,m)}}}.
\end{align*}
If $k<K-1$, we also have
\[p_{k+1}(\alpha_{j,m}e_{n_{j,m}})=\frac{p_K(\alpha_{j-1,1}e_{n_{j-1,1}})}{C^{(j-1)M+m-1}\prod_{i=k+1}^{K-2}R_{i,n_{j,m}}}\]
 and thus
\begin{align*}
\frac{p_k(\alpha_{\next^{l}(j,m)}e_{n_{\next^{l}(j,m)}})}{p_{k+1}(\alpha_{j,m}e_{n_{j,m}})}&= 
\frac{R_{K-1,n_{j,1}}\prod_{i=k+1}^{K-2}R_{i,n_{j,m}}}{C^{jM+m'-m} \prod_{i=k}^{K-2}R_{i,n_{\next^{l}(j,m)}}}\le \frac{1}{C^{l}}.
\end{align*}
Finally, if $k=K-1$, we have $p_{k+1}(\alpha_{j,m}e_{n_{j,m}})=\frac{p_K(\alpha_{j-1,1}e_{n_{j-1,1}})R_{K-1,n_{j,m}}}{C^{(j-1)M+m-1}}$ and thus 
\begin{align*}
\frac{p_k(\alpha_{\next^{l}(j,m)}e_{n_{\next^{l}(j,m)}})}{p_{k+1}(\alpha_{j,m}e_{n_{j,m}})}&= 
\frac{R_{K-1,n_{j,1}}}{C^{jM+m'-m} R_{K-1,n_{j,m}}}\le \frac{1}{C^{l}}.
\end{align*}
\end{enumerate}
\end{enumerate}

The desired result then follows from Theorem~\ref{technique}.
\end{proof}

We can deduce the result of Goli\'{n}ski~\cite{Golinski2} concerning the space $s$ thanks to this corollary.

\begin{example}
The space $s$ can be defined as the Köthe sequence space $\lambda^1(A)$ with $a_{j,n}=(n+1)^j$. We then get
\[R_{j,n}=\frac{p_{j+1}(e_n)}{p_j(e_n)}=\frac{(n+1)^{j+1}}{(n+1)^j}=n+1.\]
Therefore, the sequences $(R_{j,n})_j$ and $(R_{j,n})_n$ are non-decreasing and $\lim_n R_{j,n}=\infty$. Moreover, we have $R_{j+1,n}\le R_{j,n+1}$ and since $\frac{p_j(e_n)}{p_j(e_{n+1})}=\frac{(n+1)^j}{(n+2)^j}$, we have
\[\frac{1}{2^j}<\inf_n \frac{p_j(e_n)}{p_j(e_{n+1})}\le \sup_n \frac{p_j(e_n)}{p_j(e_{n+1})}\le 1.\]
It follows from Corollary~\ref{cor Kot} that $s$ does not satisfy the Invariant Subset Property.

\end{example}

In \cite{Golinski1}, it is also shown that $H(\mathbb{C})$ does not satisfy the Invariant Subspace Property. Thanks to Corollary~\ref{cor Kot}, we improve this result by showing that $H(\mathbb{C})$ does not satisfy the Invariant Subset Property.

\begin{cor}
$H(\mathbb{C})$ does not satisfy the Invariant Subset Property.
\end{cor}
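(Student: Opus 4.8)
The plan is to deduce this from Corollary~\ref{cor Kot}, applied to $H(\mathbb{C})$ equipped with its monomial Schauder basis $e_n(z)=z^n$, $n\ge 0$. Recall that $H(\mathbb{C})$, endowed with the topology of uniform convergence on compact subsets of $\mathbb{C}$, is a separable infinite-dimensional Fr�chet space, and that every entire function is the limit in $H(\mathbb{C})$ of the partial sums of its Taylor series, so $(e_n)_{n\ge 0}$ is indeed a Schauder basis. For any reasonable choice of defining seminorms we will have $p_1(e_n)>0$ for all $n$, so the standing hypothesis of Corollary~\ref{cor Kot} is satisfied; it remains only to arrange the three bullet conditions on the ratios $R_{j,n}=p_{j+1}(e_n)/p_j(e_n)$.

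The one genuinely non-mechanical point — and the step I expect to be the main obstacle — is the choice of the fundamental system of seminorms. With the natural choice $p_j(f)=\sup_{|z|\le j}|f(z)|$ used earlier in the paper one gets $p_j(e_n)=j^n$ and hence $R_{j,n}=((j+1)/j)^n$, which is \emph{decreasing} in $j$; this violates the requirement that $(R_{j,n})_j$ be non-decreasing, so Corollary~\ref{cor Kot} cannot be applied directly. The remedy is to work instead with the seminorms $p_j(f)=\sup_{|z|\le 2^j}|f(z)|$, i.e.\ with the subsequence $(p_{2^j})_{j\ge 1}$ of the original system; by the remark in the introduction this induces the same topology, and the disks $\overline{D}(0,2^j)$ still exhaust $\mathbb{C}$. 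With this choice $p_j(e_n)=2^{jn}$, so that
\[R_{j,n}=\frac{p_{j+1}(e_n)}{p_j(e_n)}=\frac{2^{(j+1)n}}{2^{jn}}=2^n.\]

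The remaining verifications are then immediate. Since $R_{j,n}=2^n$ does not depend on $j$, the sequence $(R_{j,n})_j$ is constant, hence non-decreasing; $(R_{j,n})_n=(2^n)_n$ is non-decreasing and tends to infinity; and $R_{j+1,n}=2^n\le 2^{n+1}=R_{j,n+1}$. Finally $p_j(e_n)/p_j(e_{n+1})=2^{-j}$ for every $n$, so $0<\inf_n p_j(e_n)/p_j(e_{n+1})=\sup_n p_j(e_n)/p_j(e_{n+1})=2^{-j}<\infty$. All hypotheses of Corollary~\ref{cor Kot} hold, and we conclude that $H(\mathbb{C})$ does not satisfy the Invariant Subset Property. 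In short, the only thing requiring thought is recognizing that the defining radii must grow geometrically rather than linearly so that the ratio sequences $(R_{j,n})_j$ become monotone; after that the argument is a direct check against Corollary~\ref{cor Kot}.
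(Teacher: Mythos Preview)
Your proof is correct and follows essentially the same approach as the paper: both apply Corollary~\ref{cor Kot} after choosing geometrically growing radii so that $p_j(e_n)=(2^j)^n$ and hence $R_{j,n}=2^n$, and the verifications of the three bullet points are identical. The only cosmetic difference is that the paper phrases the choice via the identification $H(\mathbb{C})\cong\lambda^1(A)$ with $a_{j,n}=(2^j)^n$, whereas you work directly with the sup-norms $p_j(f)=\sup_{|z|\le 2^j}|f(z)|$; since Corollary~\ref{cor Kot} depends only on the values $p_j(e_n)$, this makes no difference.
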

\begin{proof}
We can identify $H(\mathbb{C})$ with $\lambda^{1}(A)$ where $a_{j,n}=(2^j)^{n}$. We then get 
\[R_{j,n}=\frac{p_{j+1}(e_n)}{p_{j}(e_{n})}=\frac{(2^{j+1})^n}{(2^j)^n}=2^n\] which is non-decreasing in $j$ and non-decreasing in $n$. Moreover, $(R_{j,n})_n$ tends to infinity as $n$ tends to infinity and for every $j\ge 1$, every $n\ge 0$, $R_{j+1,n}\le R_{j,n+1}$. Finally, we remark that
\[\lim_n \frac{p_j(e_n)}{p_{j}(e_{n+1})}=\lim_n\frac{(2^j)^n}{(2^j)^{n+1}}=\frac{1}{2^j}.\]
We can thus deduce that $0<\inf_n \frac{p_j(e_n)}{p_{j}(e_{n+1})}<\sup_n\frac{p_j(e_n)}{p_{j}(e_{n+1})}<\infty$ and the desired result follows from Corollary~\ref{cor Kot}.
\end{proof}

We remark that Corollary~\ref{cor Kot} does not rely on the structure of the norm $p_j$ but only on values of norms $p_j(e_n)$. In particular, unlike results in \cite{Golinski1}, this corollary can be applied to Köthe spaces $\Lambda^p(A)$ for every $p\ge 1$ and not only for $p=1$. In particular, we can apply Corollary~\ref{cor Kot} to power series spaces without assuming the nuclearity.

\begin{definition}
Let $\alpha$ be an increasing sequence in $[0,+\infty)$ tending to infinity and let $r\in \mathbb{R}\cup \{+\infty\}$. The power series space $\Lambda_r(\alpha)$ is the Fréchet space
\[\Lambda_r(\alpha)=\{x\in \mathbb{K}^{\mathbb{N}}: \sum_{n=1}^{\infty}|x_n|^2e^{2t\alpha_n}
<\infty\ \text{for all $t<r$}\}\]
endowed with the seminorms $p_{j}(x)=\sum_{n=1}^{\infty}|x_n|^2e^{2t_j\alpha_n}<\infty$ where $(t_j)$
is an increasing sequence tending to $r$.
\end{definition}

We remark that every increasing sequence $(t_j)$ tending to $r$ induces the same topology. 

\begin{cor}
Let $\Lambda_{\infty}(\alpha)$ be a power series space. If ${\sup_n(\alpha_{n+1}-\alpha_n)<\infty}$ then $\Lambda_{\infty}(\alpha)$ does not satisfy the Invariant Subset Property.
\end{cor}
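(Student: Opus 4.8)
The plan is to deduce this from Corollary~\ref{cor Kot}: it suffices to exhibit a defining sequence of seminorms for which the canonical basis of $\Lambda_\infty(\alpha)$ satisfies the three bullet conditions stated there. Recall that the coordinate vectors $(e_n)_{n\ge 0}$ form a Schauder basis of the K\"othe-type space $\Lambda_\infty(\alpha)$, and that with the seminorms $p_j$ associated to an increasing sequence $(t_j)$ tending to $+\infty$ one has $p_j(e_n)=e^{2t_j\alpha_n}>0$ for all $j,n$; in particular $p_1(e_n)>0$ for every $n$. By the remark following the definition of $\Lambda_r(\alpha)$ we are free to choose $(t_j)$, and the only genuinely useful idea in the proof is to pick it conveniently: take $t_j=j$. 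Then
\[
R_{j,n}=\frac{p_{j+1}(e_n)}{p_j(e_n)}=e^{2\alpha_n}\qquad\text{for all }j\ge 1,\ n\ge 0 .
\]

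With this choice the verification is routine. Since $R_{j,n}=e^{2\alpha_n}$ does not depend on $j$, the sequence $(R_{j,n})_j$ is (weakly) non-decreasing; since $(\alpha_n)$ is increasing, $(R_{j,n})_n=(e^{2\alpha_n})_n$ is non-decreasing; and since $\alpha_n\to+\infty$ we get $R_{j,n}\to+\infty$ in $n$ for every $j$. The inequality $R_{j+1,n}\le R_{j,n+1}$ is also immediate, as $R_{j+1,n}=e^{2\alpha_n}\le e^{2\alpha_{n+1}}=R_{j,n+1}$. For the last bullet, note that
\[
\frac{p_j(e_n)}{p_j(e_{n+1})}=e^{-2j(\alpha_{n+1}-\alpha_n)},
\]
which is $\le 1$ because $\alpha_{n+1}-\alpha_n\ge 0$, so the supremum over $n$ is finite; and which is $\ge e^{-2j\sup_m(\alpha_{m+1}-\alpha_m)}>0$ precisely because $\sup_m(\alpha_{m+1}-\alpha_m)<\infty$, so the infimum over $n$ is positive. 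All hypotheses of Corollary~\ref{cor Kot} are therefore satisfied, and $\Lambda_\infty(\alpha)$ does not satisfy the Invariant Subset Property.

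There is no real obstacle here: the whole argument is a check of the three conditions of Corollary~\ref{cor Kot}. The two points worth singling out are that a generic increasing $(t_j)$ would give $R_{j,n}=e^{2(t_{j+1}-t_j)\alpha_n}$, which need not be non-decreasing in $j$ unless the increments $t_{j+1}-t_j$ are, so one should normalise to $t_j=j$ (or to any sequence with non-decreasing gaps); and that the hypothesis $\sup_n(\alpha_{n+1}-\alpha_n)<\infty$ is used exactly once, namely to keep the infimum in the third bullet bounded away from $0$.
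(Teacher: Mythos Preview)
Your proof is correct and follows essentially the same route as the paper: both identify $\Lambda_\infty(\alpha)$ with a K\"othe space, take $t_j=j$ so that $R_{j,n}$ is independent of $j$, and then verify the three bullets of Corollary~\ref{cor Kot} in the same way. The only cosmetic difference is the factor $2$ in your $p_j(e_n)=e^{2t_j\alpha_n}$ versus the paper's $p_j(e_n)=e^{t_j\alpha_n}$ (the paper views the space as $\lambda^2(A)$ with $a_{j,n}=e^{t_j\alpha_n}$), which is irrelevant to the argument.
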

\begin{proof}
Let $(t_j)$ be an increasing sequence of positive real numbers tending to infinity. The power series space $\Lambda_{\infty}(\alpha)$ coincide with the Köthe sequence space $\lambda^2(A)$ where $a_{j,n}=e^{t_j\alpha_n}$. In particular, we remark that we have $p_j(e_n)= e^{t_j\alpha_n}>0$ for every $n$. Moreover, we get $\frac{p_j(e_n)}{p_{j}(e_{n+1})}=e^{t_j(\alpha_n-\alpha_{n+1})}$ and thus for every $j$
\[0<e^{-t_j\sup_n(\alpha_{n+1}-\alpha_n)}\le \inf_n e^{t_j(\alpha_n-\alpha_{n+1})}\le \sup_n e^{t_j(\alpha_n-\alpha_{n+1})}\le 1.\]
On the other hand, we have $R_{j,n}=\frac{p_{j+1}(e_n)}{p_{j}(e_{n})}=e^{\alpha_n(t_{j+1}-t_j)}$ which is  non-decreasing in $n$ and non-decreasing in $j$ if we choose $(t_j)$ so that $(t_{j+1}-t_j)$ is non-decreasing. The desired result then follows from Corollary~\ref{cor Kot} by considering $t_j=j$.
\end{proof}

In the case of power series space $\Lambda_{r}(\alpha)$ with $r<\infty$, the sequence $(R_{j,n})_j$ cannot be non-decreasing. Therefore, we do not know if the space $\Lambda_{r}(\alpha)$ satisfies the assumptions of Theorem~\ref{technique} when $r$ is finite and $\sup_n(\alpha_{n+1}-\alpha_n)<\infty$. We thus let the following question:

\begin{problem}
Let $X=\Lambda_{r}(\alpha)$. If $r$ is finite and ${\sup_n(\alpha_{n+1}-\alpha_n)<\infty}$, can we deduce that $X$ does not satisfy the Invariant Subset Property?
\end{problem}

Notice that thanks to Goli\'{n}ski~\cite{Golinski1}, we know that if $r<\infty$, ${\sup_n(\alpha_{n+1}-\alpha_n)<\infty}$ and $\lim \frac{\log j}{\alpha_j}=0$ then $\Lambda_{r}(\alpha)$ does not satisfy the Invariant Subspace Property.

\subsection{Hereditary Invariant Subset Property}\label{Sec23}

Concerning the Hereditary Invariant Subset Property, we can deduce the following result from Theorem~\ref{technique}.

\begin{theorem}\label{hered}
Let $(X,(p_j)_j)$ be a Fréchet space. If for every $j\ge 1$, for every closed subspace $E$ of finite codimension, $p_j$ and $p_{j+1}$ are not equivalent on $E$ then $X$ does not satisfy the Hereditary Invariant Subset Property.
\end{theorem}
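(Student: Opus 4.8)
The plan is to exhibit a single closed infinite-dimensional subspace $M\subseteq X$ on which there is an operator with no non-trivial invariant subset; by definition of the Hereditary Invariant Subset Property this suffices. The subspace $M$ will be the closed linear span of a Schauder basic sequence $(e_n)_{n\ge 0}$ built inside $X$, and the whole point is to build it so that $(M,(p_j|_M)_j)$ — after, if necessary, discarding $p_1$ so that we may assume $p_1\not\equiv 0$ — satisfies the hypotheses of Theorem~\ref{technique}, essentially by reproducing the situation of Corollary~\ref{cor gen}. Theorem~\ref{technique} then supplies the desired operator on $M$.

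The only structural input the hypothesis provides is a supply of \emph{jump vectors}: for every $j\ge 1$, every closed finite-codimensional $E\subseteq X$ and every $L>0$, non-equivalence of $p_j$ and $p_{j+1}$ on $E$ yields $y\in E$ with $p_j(y)\le 1$ and $p_{j+1}(y)\ge L$. I want to stress that the hypothesis demands this on \emph{every} finite-codimensional $E$, not just on $X$: this is precisely what will let the inductive construction continue after finitely many constraints have been imposed. I would also fix at the outset a vector $x_0$ with $p_1(x_0)>0$, for use in small positivity corrections.

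I would then construct $(e_n)$ by induction, running two bookkeeping devices in parallel. First, the usual Mazur-type perturbation that makes $(e_n)$ a Schauder basic sequence with uniformly bounded partial-sum projections: once $e_0,\dots,e_{n-1}$ are chosen they span a finite-dimensional $V_{n-1}$, and by compactness of its unit sphere there is, for each prescribed tolerance, a closed finite-codimensional subspace $W_n$ of $X$ (involving only $p_1,\dots,p_n$, the higher seminorms being dealt with by a diagonal argument) such that every $e_n\in W_n$ leaves $\{e_0,\dots,e_n\}$ basic. Second, a fixed sequence $(\ell_n)$ of target levels in which every positive integer appears infinitely often: at stage $n$ one chooses $e_n\in W_n$ to be a jump vector at level $\ell_n$, but arranged — using the hypothesis inside $W_n$ again at the lower levels $\le\ell_n$ to correct the profile, together with a vanishing multiple of $x_0$ to force $p_1(e_n)>0$ — so that the finite profile $p_1(e_n)\le\cdots\le p_{\ell_n}(e_n)$ stays within a level-dependent constant of $p_1(e_n)$ while $p_{\ell_n+1}(e_n)$ is as large as one wishes. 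Since the conditions of Theorem~\ref{technique} are quantified over all $\varepsilon,C,M,N,K,J$, what must really be achieved is: for every level $j$ and every prescribed finite profile there are, arbitrarily far out in the basis, basis vectors realising it; the freedom left at each stage, together with the persistence of the jump-vector supply under passing to finite-codimensional subspaces, is what delivers this.

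Finally one checks the bookkeeping: $(e_n)$ is a Schauder basis of $M:=\overline{\text{span}}\{e_n\}$, $p_1(e_n)>0$ for all $n$, and for each $j$ there is a constant $C_j$ with infinitely many indices $n$ (those with $\ell_n=j$ chosen arbitrarily late and with arbitrarily large jump) satisfying $p_j(e_n)\le C_j p_1(e_n)$ and $p_{j+1}(e_n)\ge L\,p_j(e_n)$; feeding this into the argument of Corollary~\ref{cor gen} produces the family $\{\alpha_{j,m}e_{n_{j,m}}\}$ required by Theorem~\ref{technique}, hence an operator on $M$ with no non-trivial invariant subset, so $X$ fails the Hereditary Invariant Subset Property. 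The main obstacle is the third step: marrying the standard basic-sequence construction to the seminorm-profile engineering. Producing one jump is immediate from the hypothesis, but producing jumps with controlled tails — so that $p_1(e_n)>0$ and $p_1(e_n),\dots,p_{\ell_n}(e_n)$ stay comparable while $p_{\ell_n+1}(e_n)$ explodes, all inside the finite-codimensional $W_n$ forced by the basicity requirement — is where the hypothesis must be used in its strong ``every finite-codimensional subspace'' form, and is where the real care lies.
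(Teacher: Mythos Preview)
Your overall strategy is the paper's: build a Schauder basic sequence $(e_n)$ inside $X$, using a Mazur-type selection to guarantee basicness and the hypothesis to supply jump vectors, so that $M=\overline{\text{span}}\{e_n\}$ satisfies Theorem~\ref{technique}. The difficulty you yourself flag in the last paragraph is the real one, and your proposed resolution has a gap.

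The gap is the route through Corollary~\ref{cor gen}. That corollary needs, for each level $j$, a \emph{single} constant $C_j$ such that infinitely many basis vectors $e_n$ (arbitrarily far out) satisfy $p_j(e_n)\le C_j\,p_1(e_n)$ together with an arbitrarily large jump $p_{j+1}(e_n)\ge L\,p_j(e_n)$. Your sketch produces $e_n$ individually inside the Mazur subspace $W_n$, but as $n$ grows the finite-codimensional $W_n$ shrinks and there is no reason the ratio $p_j/p_1$ stays bounded on $W_n$; already in $s=\lambda^1((n+1)^j)$, once $W_n$ forces the first $d$ coordinates to vanish one has $p_j(u)/p_1(u)\ge (d+1)^{j-1}$ for every nonzero $u\in W_n$, and $d$ grows with $n$. ``Correcting the profile'' by further jump vectors at lower levels does not help: adding a vector $v$ with $p_j(v)$ small and $p_{j-1}(v)$ tiny does nothing to bring $p_j(e_n)/p_1(e_n)$ down. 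The positivity fix via a fixed $x_0$ is also problematic, since $x_0$ need not lie in $W_n$ and adding it can destroy the basicness estimate.

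The paper does \emph{not} go through Corollary~\ref{cor gen}. Instead it builds the sequence in \emph{blocks} of length $JM$, one block for each instance $(\varepsilon,C,M,J,K)$ of the conditions in Theorem~\ref{technique}, and verifies those conditions directly with $\alpha_{j,m}=1$. The key device is to write each block vector as a sum $e_{N+k}=u_k+v_{JM+1-k}$ of two pieces chosen successively inside Mazur subspaces: the $u_k$ are small in $p_K$ but with $p_1(u_k)>0$ (available because, under the hypothesis, $\ker p_1$ has infinite codimension, so every finite-codimensional $W$ meets $X\setminus\ker p_1$), arranged so that $p_K(u_{k})<\frac{1}{4C}p_1(u_{k-1})$; the $v_k$ are jump vectors at \emph{increasing} levels (level roughly $\lceil k/M\rceil$), with $p_j(v_k)$ dominating the $p_K$ of earlier $v$'s and $u$'s. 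Reversing the index in the $v$-part is what produces the chain condition~(3) of Theorem~\ref{technique}: for small $k$ the $u$-part controls $p_1,\dots,p_{k+1}$ and the $v$-part is still below its jump; for larger $k$ the $v$-part has jumped and dominates. No uniform $C_j$ across the whole sequence is ever needed, because each block is tailored to its own $(J,K,C,\varepsilon)$.

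In short: keep your Mazur-plus-jumps skeleton, drop the attempt to land in Corollary~\ref{cor gen}, and instead build coordinated blocks $u_k+v_{JM+1-k}$ that hit Theorem~\ref{technique} directly.
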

\begin{proof}
We first remark that since $p_1$ and $p_2$ are equivalent on $\ker p_2$, the space $\ker p_2$ cannot be a subspace of finite codimension. Without loss of generality we can thus assume that $\ker p_1$ is not a subspace of finite codimension.

The goal of this proof consists in constructing a basic sequence $(e_n)_{n\ge 1}$ in $X$ such that the closed infinite-dimensional subspace $M=\overline{\text{span}}\{e_n:n\ge 1\}$ satisfies the assumptions of Theorem~\ref{technique}.
We recall that if the sequence $(e_n)$ is chosen so that $p_1(e_n)>0$ and that there exist a constant $\Gamma$ such that for every $1\le j\le L_0$, every $L_1>L_0$, every $a_{j},\ldots,a_{L_1}\in \mathbb{K}$,
\[p_j\Big(\sum_{n=j}^{L_0} a_n e_n\Big)\le \Gamma p_j\Big(\sum_{n=j}^{L_1}a_n e_n\Big)\] then the sequence $(e_n)$ is a basic sequence (see proof of Lemma 2.2 in \cite{Menet1}). Such a sequence will be constructed by using the fact that for every $e_1,\dots,e_N\in X$, every $\varepsilon>0$, every continuous seminorm $p$ on $X$, there exists a closed subspace $E$ of finite codimension such that for every $x\in E$, every $y\in \text{span}\{e_1,\dots,e_N\}$,
\[p(y) \le (1+\varepsilon)p(x+y) \quad \text{\cite[Lemma 10.39]{Karl}}.\]

Assume that $e_1,\dots,e_{N}$ have been chosen.
We show that for every $\varepsilon> 0$, $C, M\ge 1$ and $K\ge J\ge 2$, it is possible to find elements $e_{N+1},\dots,e_{N+JM}$ such that 
\begin{enumerate}
\item  for every $n\in [N+1,N+JM]$, $p_1(e_n)>0$ , 
\item for every  $1\le j\le N$, every $a_{j},\ldots,a_{N+JM}\in \mathbb{K}$, 
\[p_j\Big(\sum_{k=j}^{N} a_k e_k\Big)\le 2^{\frac{1}{N^2}}p_j\Big(\sum_{k=j}^{N+JM}a_k e_k\Big),\]
\item for every $L_0,L_1\in [N,N+JM]$ with $L_0<L_1$, every $1\le j\le L_0$, every $a_{j},\ldots,a_{L_1}\in \mathbb{K}$, 
\[p_j\Big(\sum_{k=j}^{L_0} a_k e_k\Big)\le 5 p_j\Big(\sum_{k=j}^{L_1}a_k e_k\Big),\]
\end{enumerate}
and such that Conditions (1)-(3) in Theorem~\ref{technique} are satisfied with $e_{n_{j,m}}=e_{N+(j-1)M+m}$ and $\alpha_{j,m}=1$.

We first select vectors $(u_k)_{1\le k\le JM}$ such that for any $0\le k< JM$, for any $j\le N+k$, for any $a_1,\ldots,a_{N+k+1}\in \mathbb{K}$,

\[p_j\Big(\sum_{l=1}^{N} a_l e_l+\sum_{l=1}^{k} a_{N+l} u_{l}\Big)\le 2^{\frac{1}{2N^2JM}}p_j\Big(\sum_{l=1}^{N}a_l e_l+\sum_{l=1}^{k+1}a_{N+l}u_{l}\Big)\]
and such that for every $1\le k\le JM$
\begin{enumerate}
\item $p_1(u_k)>0$;
\item $p_{K}(u_k)<\frac{\varepsilon}{2}$;
\item $p_{K}(u_{k})< \frac{1}{4C} p_{1}(u_{k-1})$ with $u_0=e_N$.
\end{enumerate}
This is possible since for every $k<JM$ there exists a closed subspace $E$ of finite codimension such that for every $j\le N+k$, every $x\in E$, every $y\in \text{span}(\{e_n:n\le N\}\cup\{u_l:l\le k\})$, we have $p_j(y)\le 2^{\frac{1}{2N^2JM}} p_j(x+y)$ and since $\ker p_1$ is not of finite codimension.

We then select vectors $(v_k)_{1\le k\le JM}$ such that for any $0\le k< JM$, for any $j\le N+k$, for any $a_1,\ldots,a_{N+JM}\in \mathbb{K}$, any $b_{N+1},\dots,b_{N+k+1}\in \mathbb{K}$
\[p_j\Big(\sum_{l=1}^{N} a_l e_l+\sum_{l=1}^{JM} a_{N+l} u_{l}+\sum_{l=1}^{k}b_{N+l}v_{l}\Big)\le 2^{\frac{1}{2N^2JM}}p_j\Big(\sum_{l=1}^{N}a_l e_l+\sum_{l=1}^{JM} a_{N+l} u_{l}+\sum_{l=1}^{k+1}b_{N+l}v_{l}\Big),\]
and such for every $1\le j\le J$, every $k\in ((j-1)M,jM]$,
\begin{enumerate}
\item if $j>1$ then $p_{j-1}(v_{k})<\frac{1}{2}p_{1}(u_{JM+1-k})$
\item $p_{j}(v_k)> \frac{1}{\varepsilon}+\frac{\varepsilon}{2}$
\item $p_{j}(v_k)\ge C (\max_{k'<k}p_{K}(v_{k'})+2\max_{k\le JM}p_K(u_k))$,
\end{enumerate}
which is possible because $p_{j+1}$ and $p_j$ are not equivalent on any closed subspace of finite codimension.

We let $e_{N+k}=u_{k}+v_{JM+1-k}$ for every $1\le k\le JM$. 
We then have 
\begin{enumerate}
\item for every $1\le k\le (J-1)M$
\[p_1(e_{N+k})\ge p_1(u_k)-p_1(v_{JM+1-k})\ge (1-\frac{1}{2})p_1(u_k)>0,\]
and for every $(J-1)M<k\le JM$
\[p_1(e_{N+k})\ge p_1(v_{JM+1-k})-p_1(u_k)\ge \frac{1}{\varepsilon}+\frac{\varepsilon}{2}-\frac{\varepsilon}{2}>0,\]
\item  for every  $1\le j\le N$ every $a_{j},\ldots,a_{N+JM}\in \mathbb{K}$, 
\[p_j\Big(\sum_{k=j}^{N} a_k e_k\Big)\le 2^{\frac{1}{N^2}}p_j\Big(\sum_{k=j}^{N+JM}a_k e_k\Big),\]
since
\begin{align*}
p_j\Big(\sum_{k=j}^{N} a_k e_k\Big)&\le 2^{\frac{1}{2N^2JM}}p_j\Big(\sum_{k=j}^{N} a_k e_k+a_{N+1}u_1\Big) \\
&\le (2^{\frac{1}{2N^2JM}})^{JM}p_j\Big(\sum_{k=j}^{N} a_k e_k+\sum_{k=1}^{JM}a_{N+k}u_k\Big)\\
&\le (2^{\frac{1}{2N^2JM}})^{JM+1}p_j\Big(\sum_{k=j}^{N} a_k e_k+\sum_{k=1}^{JM}a_{N+k}u_k+a_{N+JM}v_{1}\Big)\\
&\le (2^{\frac{1}{2N^2JM}})^{2JM}p_j\Big(\sum_{k=j}^{N} a_k e_k+\sum_{k=1}^{JM}a_{N+k}u_k+\sum_{k=1}^{JM}a_{N+JM+1-k}v_{k}\Big)\\
&=2^{\frac{1}{N^2}}p_j\Big(\sum_{k=j}^{N} a_k e_k+\sum_{k=1}^{JM}a_{N+k}(u_k+v_{JM+1-k})\Big)\\
&=2^{\frac{1}{N^2}} p_j\Big(\sum_{k=j}^{N+JM} a_k e_k\Big),
\end{align*}
\item for every $L_0,L_1\in [N,N+JM]$ with $L_0<L_1$, every $1\le j\le L_0$, every $a_{j},\ldots,a_{L_1}\in \mathbb{K}$, 
\[p_j\Big(\sum_{k=j}^{L_0} a_k e_k\Big)\le  5  p_j\Big(\sum_{k=j}^{L_1}a_k e_k\Big)\]
since
\begin{align*}
p_j\Big(\sum_{k=j}^{L_0} a_k e_k\Big)&= p_j\Big(\sum_{k=j}^{N} a_k e_k+\sum_{k=\max\{j,N+1\}}^{L_0}a_{k}u_{k-N}+ \sum_{k=\max\{j,N+1\}}^{L_0}a_{k}v_{JM+1-k+N}\Big)\\
&\le p_j\Big(\sum_{k=j}^{N} a_k e_k+\sum_{k=\max\{j,N+1\}}^{L_0}a_{k}u_{k-N}\Big)+ p_j\Big(\sum_{k=\max\{j,N+1\}}^{L_0}a_{k}v_{JM+1-k+N}\Big)\\
&\le (2^{\frac{1}{2N^2JM}})^{2L_1-L_0-N} p_j\Big(\sum_{k=j}^{N} a_k e_k+\sum_{k=\max\{j,N+1\}}^{L_1}a_{k}u_{k-N}+\sum_{k=\max\{j,N+1\}}^{L_1}a_{k}v_{JM+1-k+N}\Big)\\
&\quad+p_j\Big(\sum_{k=j}^{N} a_l e_l+\sum_{k=\max\{j,N+1\}}^{L_1}a_{k}u_{k-N}+ \sum_{k=\max\{j,N+1\}}^{L_1}a_{k}v_{JM+1-k+N}\Big)\\
&\quad+ p_j\Big(\sum_{k=j}^{N} a_l e_l+\sum_{k=\max\{j,N+1\}}^{L_1}a_{k}u_{k-N}+ \sum_{k=L_0+1}^{L_1}a_{k}v_{JM+1-k+N}\Big)\\
&\le \Big((2^{\frac{1}{2N^2JM}})^{2L_1-L_0-N}+1+(2^{\frac{1}{2N^2JM}})^{L_0-N}\Big) p_j\Big(\sum_{k=j}^{L_1} a_k e_k\Big)\\
&\le \Big(1+2\ .\ 2^{\frac{1}{N^2}}\Big) p_j\Big(\sum_{k=j}^{L_1} a_k e_k\Big)\le  5 p_j\Big(\sum_{k=j}^{L_1} a_k e_k\Big)
\end{align*}
\end{enumerate}
Moreover, for every $1\le j \le J$, every $1\le m\le M$, we have
\begin{enumerate}
\item $p_{J-1}(e_{N+m})=p_{J-1}(u_{m}+v_{JM+1-m})< \frac{\varepsilon}{2}+\frac{\varepsilon}{4}\le \varepsilon$,
\item $p_1(e_{N+(J-1)M+m})=p_1(u_{(J-1)M+m}+v_{M+1-m})> \frac{1}{\varepsilon}+\frac{\varepsilon}{2} -\frac{\varepsilon}{2}\ge \frac{1}{\varepsilon}$,
\item for every $l\le M$, if $L=(j-1)M+m$ with $1\le j\le J$ and $1\le m\le M$ and if $L+l\le JM$ then for every $k< K$,
\[C p_k(e_{N+L+l})\le p_{k+1}(e_{N+L})\]
because if $JM+1-L\le (k+1)M$ then 
\begin{align*}
p_{k+1}(e_{N+L})=p_{k+1}(u_{L}+v_{JM+1-L})&\ge p_{k+1}(v_{JM+1-L})-p_{k+1}(u_L) \\
&\ge C (p_{k}(v_{JM+1-L-l})+p_k(u_{L+l})+p_{k+1}(u_L))-p_{k+1}(u_L)\\
&\ge C p_{k}(v_{JM+1-L-l}+u_{L+l})= Cp_k(e_{N+L+l}).
\end{align*}
and if $JM+1-L> (k+1)M$ then 
\begin{align*}
p_{k+1}(e_{N+L})=p_{k+1}(u_{L}+v_{JM+1-L})&\ge p_{k+1}(u_L)-p_{k+1}(v_{JM+1-L}) \\
&\ge p_{k+1}(u_L)-\frac{1}{2}p_{1}(u_L) \ge \frac{1}{2}p_{1}(u_L)\\
&\ge 2C p_K(u_{L+l})\ge C(p_{k}(u_{L+l})+p_k(v_{JM+1-L-l}))\\
&\ge C p_k(e_{N+L+l}).
\end{align*}
since $JM+1-L-l>kM$.
\end{enumerate}

We consider sequences $(J_s)_{s\ge 1}$, $(M_s)_{s\ge 1}$ and $(K_s)_{s\ge 1}$ such that for every $s\ge 1$, $K_s\ge J_s$ and for every $M\ge 1$ and $K\ge J\ge 2$, the set $\{s:J_s=J,\ M_s=M, \ K_s=K\}$ is infinite. We let $N_0=0$ and $N_t=\sum_{s\le t}J_s M_s$ for every $t\ge 1$.\\
We can then construct a sequence $(e_n)_{n\ge 1}$ such that 
\begin{enumerate}
\item  for every $n\ge 1$, $p_1(e_n)>0$ , 
\item for every $t\ge 1$, every $1\le j\le N_{t}$ every $a_{j},\ldots,a_{N_{t+1}}\in \mathbb{K}$, 
\[p_j\Big(\sum_{k=j}^{N_{t}} a_k e_k\Big)\le 2^{\frac{1}{N_t^2}}p_j\Big(\sum_{k=j}^{N_{t+1}}a_k e_k\Big),\]
\item for every $t\ge 1$, every $L_0,L_1\in [N_{t},N_{t+1}]$ with $1\le L_0<L_1$, every $1\le j\le L_0$, every $a_{j},\ldots,a_{L_1}\in \mathbb{K}$, 
\[p_j\Big(\sum_{k=j}^{L_0} a_k e_k\Big)\le 5 p_j\Big(\sum_{k=j}^{L_1}a_k e_k\Big),\]
\end{enumerate}
and such that Conditions (1)-(3) in Theorem~\ref{technique} are satisfied for $\varepsilon=\frac{1}{s}$, $C=s$, $J=J_s$, $M=M_s$ and $K=K_s$ by considering $e_{n_{j,m}}=e_{N_s+(j-1)M_s+m}$ and $\alpha_{j,m}=1$. We then deduce that the sequence $(e_n)$ satisfies Conditions (1)-(3) in Theorem~\ref{technique} for every $\varepsilon>0$, $C,M\ge 1$ and $K\ge J\ge 2$ and that $(e_n)_{n\ge 1}$ is a basic sequence since for every $n\ge 1$, $p_1(e_n)>0$ and since for every $1\le j\le L_0$, every $L_1>L_0$, every $a_{j},\ldots,a_{L_1}\in \mathbb{K}$, if $L_0,L_1\in [N_{t},N_{t+1}]$ we have $p_j\Big(\sum_{k=j}^{L_0} a_k e_k\Big)\le 5 p_j\Big(\sum_{k=j}^{L_1}a_k e_k\Big)$ and if $L_0\in (N_{s},N_{s+1}]$ and $L_1\in (N_{t},N_{t+1}]$ with $s<t$ then 
\begin{align*}
p_j\Big(\sum_{k=j}^{L_0} a_k e_k\Big)&\le 5p_j\Big(\sum_{k=j}^{N_{s+1}} a_k e_k\Big)\\
&\le 5 \Big(\prod_{r=s+1}^{t-1}2^{\frac{1}{N^2_r}}\Big)p_j\Big(\sum_{k=j}^{N_{t}} a_k e_k\Big)\\
&\le 5^2 \Big(\prod_{r=s+1}^{t-1}2^{\frac{1}{N^2_r}}\Big)p_j\Big(\sum_{k=j}^{L_1} a_k e_k\Big)\\
&\le 5^2 2^{\sum_{n=1}^{\infty}\frac{1}{n^2}} p_j\Big(\sum_{k=j}^{L_1}a_k e_k\Big).
\end{align*}
We conclude that $(e_n)$ is a basic sequence and that the space $M=\overline{\text{span}}\{e_n:n\ge 1\}$ satisfies the assumptions of Theorem~\ref{technique}. Therefore the subspace $M$ does not satisfy the Invariant Subset Property and thus $X$ does not satisfy the Hereditary Invariant Subspace Property. This concludes the proof.
\end{proof}

\begin{cor}\label{heredker}
Let $(X,(p_j)_j)$ be a Fréchet space. If for every $j\ge 1$, $\ker p_{j+1}$ is a subspace of infinite codimension in $\ker p_j$ then $X$ does not satisfy the Hereditary Invariant Subset Property.
\end{cor}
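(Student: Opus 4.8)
The plan is to obtain this as an immediate consequence of Theorem~\ref{hered}. By that theorem it is enough to check the following: for every $j\ge 1$ and every closed subspace $E\subseteq X$ of finite codimension, the seminorms $p_j$ and $p_{j+1}$ are not equivalent on $E$. Since the sequence $(p_j)$ is non-decreasing we have $\ker p_{j+1}\subseteq\ker p_j$, and on any subspace contained in $\ker p_j$ the seminorm $p_j$ vanishes identically; hence $p_j$ and $p_{j+1}$ can only be equivalent on such a subspace if that subspace is in fact contained in $\ker p_{j+1}$. So it suffices to produce a vector $x\in E$ with $p_j(x)=0$ and $p_{j+1}(x)>0$.

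First I would fix $j\ge 1$ and a closed subspace $E$ of finite codimension $d$ in $X$. Restricting the quotient map $X\to X/E$ to $\ker p_j$ has kernel $E\cap\ker p_j$, so $\ker p_j/(E\cap\ker p_j)$ embeds in $X/E$; therefore $E\cap\ker p_j$ has codimension at most $d$ in $\ker p_j$. Now I would invoke the hypothesis that $\ker p_{j+1}$ has infinite codimension in $\ker p_j$, together with the additivity of codimension along the chain $E\cap\ker p_j\subseteq\ker p_{j+1}\subseteq\ker p_j$, namely $\operatorname{codim}_{\ker p_j}(E\cap\ker p_j)=\operatorname{codim}_{\ker p_j}(\ker p_{j+1})+\operatorname{codim}_{\ker p_{j+1}}(E\cap\ker p_j)$. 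The left-hand side is finite while the first term on the right is infinite; this contradiction shows $E\cap\ker p_j\not\subseteq\ker p_{j+1}$. Consequently there exists $x\in E\cap\ker p_j$ with $p_{j+1}(x)>0$, and since $p_j(x)=0$ no inequality of the form $p_{j+1}(y)\le C\,p_j(y)$ can hold for all $y\in E$, i.e. $p_j$ and $p_{j+1}$ are not equivalent on $E$.

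Having verified the hypothesis of Theorem~\ref{hered}, that theorem immediately gives that $X$ does not satisfy the Hereditary Invariant Subset Property. The entire argument is elementary linear algebra about codimensions of subspaces; the only point that deserves a moment of care is the additivity identity used above (together with the remark that $\ker p_{j+1}\subseteq\ker p_j$ because $(p_j)$ is non-decreasing), so I do not expect any genuine obstacle.
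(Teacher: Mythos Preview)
Your proposal is correct and follows essentially the same approach as the paper: both verify the hypothesis of Theorem~\ref{hered} by showing that for any closed finite-codimensional subspace $E$ one has $E\cap\ker p_j\not\subseteq\ker p_{j+1}$, yielding a vector $x\in E$ with $p_j(x)=0$ and $p_{j+1}(x)>0$. You merely spell out the codimension argument that the paper leaves implicit; note that your ``additivity'' identity is really only used in the weak form that $A\subseteq B\subseteq C$ forces $\operatorname{codim}_C(A)\ge\operatorname{codim}_C(B)$, which is all that is needed.
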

\begin{proof}
Let  $j\ge 1$ and $E$ a closed subspace of finite codimension. Since $\ker p_{j+1}$ is a subspace of infinite codimension in $\ker p_j$, we deduce that $E\cap \ker p_{j}$ cannot be included in $\ker p_{j+1}$. We can thus find $x\in E$ such that $p_j(x)=0$ and $p_{j+1}(x)>0$. The seminorms $p_j$ and $p_{j+1}$ are thus not equivalent on $E$ and we can conclude thanks to Theorem~\ref{hered}.
\end{proof}

If we now consider a non-normable Fréchet space $X$ with a continuous norm, there then exists a sequence of norms $(p_j)_{j\ge 1}$ inducing the topology of $X$ such that for every $j$, $p_{j}$ and $p_{j+1}$ are not equivalent on $X$. However, it is possible that there exists a closed subspace  $E$ of finite codimension such that $p_j$ and $p_{j+1}$ are equivalent on $E$. Indeed, if we consider, for instance, $H(\mathbb{C})$, which can be identified with $\lambda^1(A)$ where $a_{j,k}=j^k$ and if we let $q_j(x)=p_j(x)+|x^*_j(x)|$ where $x^*_j(x)=\sum_{n=0}^{\infty}(j+1)^nx_n$, then the sequence of norms $(p_1,q_1,p_2,q_2,p_3,q_3,\dots)$ induces the same topology that $(p_j)_{j\ge 1}$ (since $p_j(x)\le q_j(x)\le 2p_{j+1}(x)$). However, while $p_j$ and $q_j$ are not equivalent on $X$, these norms are equivalent on $\ker x^*_j$. 

One can therefore wonder if for every non-normable Fréchet space $X$ with a continuous norm, there exists a sequence of norms $(p_j)$ inducing the topology of $X$ and such that $p_{j+1}$ and $p_j$ are not equivalent on every closed subspace of finite codimension.

\begin{problem}
Does every non-normable Fréchet space $X$ with a continuous norm not satisfy the Hereditary Invariant Subset Property?
\end{problem}

We can show that the answer is yes in the case of Köthe sequence spaces.
\begin{cor}
Let $X=\lambda^p(A)$ or $c_0(A)$ be a Köthe sequence space with a continuous norm. If $X$ is non-normable then $X$ does not satisfy the Hereditary Invariant Subset Property.
\end{cor}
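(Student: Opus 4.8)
The plan is to reduce the statement to Theorem~\ref{hered}, the whole point being to pass to a suitable subsequence of the canonical K\"othe seminorms. Write $X=\lambda^p(A)$ with the canonical norms $p_j((x_k)_k)=\big(\sum_k|x_ka_{j,k}|^p\big)^{1/p}$; the argument for $c_0(A)$ is identical, the power sum being replaced by a maximum. Since $X$ has a continuous norm, Theorem~\ref{Schauder2} provides an index $j^*$ with $a_{j^*,n}=p_{j^*}(e_n)>0$ for every $n$, and because $(a_{j,n})_j$ is non-decreasing we get $a_{j,n}>0$ for all $j\ge j^*$ and all $n$. Discarding the seminorms of index $<j^*$ (which does not change the topology of $X$), we may thus assume every $a_{j,n}$ is a finite positive number, so that all ratios $a_{j+1,n}/a_{j,n}$ make sense.

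Next I would extract a subsequence. A norm $p_{j_0}$ induces the topology of $X$ precisely when every $p_j$ satisfies $p_j\le C_jp_{j_0}$, i.e.\ $\sup_na_{j,n}/a_{j_0,n}<\infty$; since this supremum is non-decreasing in $j$, the non-normability of $X$ means that for every $j_0$ there is $j>j_0$ with $\sup_na_{j,n}/a_{j_0,n}=\infty$. Iterating, I obtain indices $j^*=j_1<j_2<\cdots$ with $\sup_na_{j_{k+1},n}/a_{j_k,n}=\infty$ for every $k$. Replacing $(p_j)_j$ by $(p_{j_k})_k$, which represents the same Fr\'echet space $X$, we may and do assume $\sup_na_{j+1,n}/a_{j,n}=\infty$ for every $j\ge1$.

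It remains to verify the hypothesis of Theorem~\ref{hered}. First note that $\sup_na_{j+1,n}/a_{j,n}=\infty$ forces the set $\{n:a_{j+1,n}>La_{j,n}\}$ to be infinite for every $L>0$: if it were finite for some $L$, deleting those finitely many indices would leave the supremum bounded by $\max\{L,\text{a finite maximum}\}$. Now fix $j\ge1$, a closed subspace $E$ of finite codimension $r$, and $L>0$. Choose distinct $n_0,\dots,n_r$ with $a_{j+1,n_i}>La_{j,n_i}$; since $\dim\mathrm{span}\{e_{n_0},\dots,e_{n_r}\}=r+1>r$, this span meets $E$ in a nonzero vector $x=\sum_ic_ie_{n_i}$, and then
\[
p_{j+1}(x)^p=\sum_i|c_i|^pa_{j+1,n_i}^p>L^p\sum_i|c_i|^pa_{j,n_i}^p=L^p\,p_j(x)^p
\]
(for $c_0(A)$ one gets $p_{j+1}(x)=\max_i|c_i|a_{j+1,n_i}>L\max_i|c_i|a_{j,n_i}=L\,p_j(x)$ in the same way). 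As $L$ was arbitrary and $x\ne0$, the norms $p_j$ and $p_{j+1}$ are not equivalent on $E$. Since $E$ and $j$ were arbitrary, Theorem~\ref{hered} applies and shows that $X$ does not satisfy the Hereditary Invariant Subset Property.

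I do not anticipate a genuine obstacle. The two delicate points are using the continuous-norm hypothesis to make every $a_{j,n}$ strictly positive, so that the ratios are meaningful finite numbers, and passing to the subsequence so that the blow-up of \emph{consecutive} ratios actually occurs; as the discussion of the alternative norms on $H(\mathbb{C})$ preceding the statement shows, the latter need not hold for an arbitrary defining sequence of seminorms, which is exactly why the reduction to Theorem~\ref{hered} requires this reindexing.
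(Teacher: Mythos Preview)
Your argument is correct and follows essentially the same route as the paper: use the continuous norm to make all weights positive, pass to a subsequence of the canonical seminorms so that for every $j$ and every $C>0$ the set $\{n:a_{j+1,n}>Ca_{j,n}\}$ is infinite, intersect the span of sufficiently many such basis vectors with the finite-codimensional subspace $E$, compute the seminorm ratio directly, and invoke Theorem~\ref{hered}. The only difference is that you spell out a couple of implications (why $\sup_n a_{j+1,n}/a_{j,n}=\infty$ forces the set to be infinite, and why the span meets $E$) that the paper leaves implicit.
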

\begin{proof}
Since $X$ possesses a continuous norm, there exists $j$ such that $a_{j,n}\ne 0$ for every $n$ (Theorem~\ref{Schauder2}). Therefore, since $X$ is not a Banach space, there exists an increasing sequence $(j_m)$ such that for every $C>0$, every $m\ge 1$, the set $\{n: a_{j_{m+1},n}> C a_{j_{m},n}\}$ is infinite. Without loss of generality, we can thus assume that $p_1$ is a norm and that for every $C>0$, every $j\ge 1$, the set $I_{j,C}=\{n: a_{j+1,n}> C a_{j,n}\}$ is infinite. Let $j\ge 1$ and $E$ be a closed subspace of finite codimension. For every $C>0$, we can then find a non-zero vector $x\in \text{span}\{e_{n}:n\in I_{j,C}\}\cap E$. This implies that $p_{j+1}$ and $p_j$ are not equivalent on $E$ since $p_j(x)\ne 0$ and
\[p_{j+1}^p(x)=\sum_{n\in I_{j,C}}|x_na_{j+1,n}|^p\ge C^p \sum_{n\in I_{j,C}}|x_n a_{j,n}|^p=C^p p_{j}^p(x)\]
if $X=\lambda^p(A)$ and
\[p_{j+1}(x)=\sup_{n\in I_{j,C}}|x_na_{j+1,n}|\ge C \sup_{n\in I_{j,C}}|x_n a_{j,n}|=C p_{j}(x)\]
if $X=c_0(A)$. The desired result then follows from Theorem~\ref{hered}.
\end{proof}

\section{Proof of Theorem~\ref{technique}}\label{last}
Let $(X,(p_j)_{j})$ be a Fréchet space possessing a Schauder basis $(e_n)_{n\ge 0}$ satisfying the assumptions of Theorem~\ref{technique}. These assumptions are still satisfied if we consider a subsequence of $(p_j)$ and if we consider multiples of these seminorms. Without loss of generality, we can thus assume that $p_{j+1}(x)\ge 2 p_j(x)$ for every $j\ge 1$ and that there exists an increasing sequence $(C_j)_{j\ge 1}$ such that for every $j\ge 1$, every $M\le N$, every $x_1,\dots,x_N$, 
\begin{equation}\label{basis}
p_j\Big(\sum_{n=0}^{M}x_n e_n\Big)\le C_j p_{j+1}(\sum_{n=0}^{N}x_n e_n)\quad \text{(see Theorem~\ref{Schauder}).}
\end{equation}
Moreover, in view of Theorem~\ref{Schauder2}, we can also assume that $p_1$ is a norm.

Let $(N_n)_{n\ge 1}$ and $(R_n)_{n\ge 1}$ with $\max\{N_n,R_n\}\le n$ such that $|N_{n+1}-N_n|\le 1$ for every $n$ and such that for every $(i,j)\in \mathbb{N}^2$, there exists an infinity of integers $n$ such that $(N_n,R_n)=(i,j)$.

Let $(\mu_n)$ be a sequence of positive integers and sequences $(\Delta_n)_{n\ge 1}$, $(a_n)_{n\ge 1}$, $(t_n)_{n\ge 0}$ and $(c_n)_{n\ge 1}$ such that $\Delta_1=1$,  $\Delta_{n+1}=c_{n}^{\mu_n}+t_n$ for every $n\ge 1$ and such that for every $n\ge 1$, every $1\le k\le \mu_{n}-1$,
\begin{equation}\label{param}
2\Delta_n<a_n,\quad a_{n}+\Delta_{n}<t_{n},\quad 3t_{n}+n< c_{n} \quad \text{and}\quad 2c_n^k+2t_n<c_n^{k+1}.
\end{equation}
Notice that we then have for every $n\ge 1$, 
\[a_{n+1}+\Delta_{n+1}+c_n^{\mu_n}<c_{n+1}.\]

Let $(Q_{n,k})_{n\ge 1, 1\le k\le \mu_{n}}$ be a family of polynomials satisfying $\text{deg}\, Q_{n,k}\le t_n+n$.

We want to construct an operator without non-trivial invariant subset, i.e. an operator such that every non-zero vector is hypercyclic. This will be done by using Read's type constructions (see \cite{Read3} or \cite{Grivaux} for a generalization). Actually we will look at sufficient conditions to apply a Read's type construction and we will show at the end of the proof that these conditions can be satisfied under the assumptions of Theorem~\ref{technique}. To this end, we will work with multiples of the basis $(e_n)_{n\ge 0}$ which we will reorder in a suitable way. 

We denote by $(u_n)_{n\ge 0}$ the sequence obtained by this reordering and we consider the map $T:\text{span}\{u_j:j\ge 0\}\to \text{span}\{u_j:j\ge 0\}$ defined by

\[T^{j}u_0=\left\lbrace
\begin{array}{ll}
u_j+T^{j-a_n}u_0 & \mbox{if $j\in [a_n,a_n+\Delta_n)$;}\\
u_j+Q_{n,k}(T)T^{j-c^k_n}u_0 & \mbox{if $j\in [c^k_n,c^k_n+t_n)$,  $1\le k\le \mu_n$;}\\
u_j & \mbox{otherwise.}
\end{array}
\right.\]

The goal of this proof consists in showing that for a convenient reordering $(u_n)$ of multiples of the basis $(e_n)$ and for a good choice of parameters, the operator $T$ can be extended on $X$ and does not possess non-trivial invariant subsets. We will thus assume that $e_n=\lambda_{\sigma(n)} u_{\sigma(n)}$ where $\sigma$ is a bijection on $\mathbb{Z}_+$ and $\lambda_m\ne 0$ for every $m$. Moreover, the sequence of parameters $(t_n)$ will be chosen such that for every $n\ge 1$,
\begin{equation}
\{\lambda_{0} u_0,\dots,\lambda_{t_n-1} u_{t_n-1}\}=\{e_0,\dots,e_{t_n-1}\}.
\label{tn}
\end{equation}

We first remark that we can easily compute the vectors $Tu_j$ for every $j\ge 0$ from the iterates of $u_0$ under the action of $T$.

\begin{prop}\label{comp}\text{~}
\begin{enumerate}
\item For every $j\in  \mathbb{Z}_+\backslash\bigcup_{n\ge 1}\bigcup_{1\le k\le \mu_n}\{a_n-1,a_n+ \Delta_n-1,c_n^k-1,c_n^k+t_n-1\}$, \[Tu_j=u_{j+1};\]
\item For every $n\ge 1$,
\[T u_{a_n-1}=u_{a_n}+u_0;\]
\item For every $n\ge 1$,
\[T u_{a_n+\Delta_n-1}=u_{a_n+\Delta_n}-u_{\Delta_n}\]
\item For every $n\ge 1$, every $1\le k\le \mu_n$,
\[T u_{c_n^k-1}=u_{c_n^k}+Q_{n,k}(T)u_0;\]
\item For every $n\ge 1$, every $1\le k\le \mu_n$,
\[T u_{c_n^k+t_n-1}= u_{c_n^k+t_n}-Q_{n,k}(T)u_{t_n}.\]
\end{enumerate}
\end{prop}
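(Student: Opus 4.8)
The plan is to read off the formulas for $Tu_j$ from the defining recursion for $T^j u_0$, using that $T^{j+1}u_0 = T(T^j u_0)$ and that the iterates $T^j u_0$ form a triangular system with respect to the $u_j$: modulo lower-indexed terms (and terms coming from $u_0$ via the polynomials), $T^j u_0 = u_j + (\text{combination of } u_i, i<j)$. First I would isolate $u_{j+1}$ in the expression for $T^{j+1}u_0$ and compare it with $T$ applied to the analogous expression for $T^j u_0$; the cross terms cancel, leaving an identity for $Tu_j$. Concretely, for a generic index $j$ (not one of the finitely many boundary indices listed in (1)) one has $T^{j}u_0 = u_j + v$ and $T^{j+1}u_0 = u_{j+1} + Tv$ where $v$ is the ``carried'' part; since $T^{j+1}u_0 = T(T^j u_0) = Tu_j + Tv$, we get $Tu_j = u_{j+1}$. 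This is essentially bookkeeping, so I would present (1) first and quickly.

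For the boundary cases (2)--(5) the same cancellation is performed but now the ``carried'' part changes discontinuously across the boundary, which is exactly where the nontrivial term appears. For (2): at $j=a_n$ the recursion gives $T^{a_n}u_0 = u_{a_n} + T^{0}u_0 = u_{a_n}+u_0$, while $T^{a_n-1}u_0 = u_{a_n-1}$ (assuming $a_n-1$ is a generic index, which is arranged by the spacing conditions \eqref{param}); applying $T$ to the latter and matching with the former yields $Tu_{a_n-1} = u_{a_n}+u_0$. For (3): $T^{a_n+\Delta_n}u_0 = u_{a_n+\Delta_n}+T^{\Delta_n}u_0$ versus $T^{a_n+\Delta_n-1}u_0 = u_{a_n+\Delta_n-1}+T^{\Delta_n-1}u_0$; applying $T$ to the second gives $u_{a_n+\Delta_n-1+1\text{-term}} = Tu_{a_n+\Delta_n-1} + T^{\Delta_n}u_0$, so $Tu_{a_n+\Delta_n-1} = u_{a_n+\Delta_n} - u_{\Delta_n}$ after using that, for generic index $\Delta_n-1$, $T(T^{\Delta_n-1}u_0) = T^{\Delta_n}u_0$ and noting $T^{\Delta_n-1}u_0 = u_{\Delta_n-1}$, $T^{\Delta_n}u_0=u_{\Delta_n}$. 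Cases (4) and (5) are identical in spirit, with $Q_{n,k}(T)T^{j-c_n^k}u_0$ playing the role of the carried part; in (4) one gets the extra summand $Q_{n,k}(T)u_0$, and in (5) one gets $-Q_{n,k}(T)u_{t_n}$.

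The one point that requires care, and which I expect to be the main (mild) obstacle, is verifying that the indices $a_n-1$, $a_n+\Delta_n-1$, $c_n^k-1$, $c_n^k+t_n-1$, $\Delta_n$, $\Delta_n-1$, $t_n$ that appear in these computations are themselves ``generic'' — i.e. that applying $T$ to $T^{j-1}u_0$ really does produce $T^j u_0$ without spawning a further correction — so that the cancellation is clean. This is where the spacing inequalities \eqref{param} (namely $2\Delta_n<a_n$, $a_n+\Delta_n<t_n$, $3t_n+n<c_n$, $2c_n^k+2t_n<c_n^{k+1}$, together with $a_{n+1}+\Delta_{n+1}+c_n^{\mu_n}<c_{n+1}$) enter: they guarantee that the various active intervals $[a_n,a_n+\Delta_n)$ and $[c_n^k,c_n^k+t_n)$, as well as the shifted ranges hit by $Q_{n,k}(T)$ (of degree $\le t_n+n$), are pairwise disjoint and disjoint from the relevant predecessor indices, so the recursion is genuinely case-by-case with no overlap. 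Once that disjointness is recorded, (1)--(5) follow by the cancellation argument above, and I would close by noting that $\deg Q_{n,k}\le t_n+n$ keeps $Q_{n,k}(T)u_0$ and $Q_{n,k}(T)u_{t_n}$ supported on the stated blocks.
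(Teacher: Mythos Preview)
Your approach is correct and is exactly the natural one: write $u_j = T^j u_0 - (\text{correction term})$, apply $T$, and compare with the defining formula for $T^{j+1}u_0$. The paper in fact gives no proof of this proposition at all; it is stated as an immediate consequence of the definition of $T$, so your write-up is more detailed than what the paper supplies.

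One small slip to fix in case~(3): you write $T^{a_n+\Delta_n}u_0 = u_{a_n+\Delta_n}+T^{\Delta_n}u_0$, but $a_n+\Delta_n\notin[a_n,a_n+\Delta_n)$, so by the definition $T^{a_n+\Delta_n}u_0=u_{a_n+\Delta_n}$ (generic case). The correct computation is
\[
u_{a_n+\Delta_n}=T^{a_n+\Delta_n}u_0=T\bigl(u_{a_n+\Delta_n-1}+T^{\Delta_n-1}u_0\bigr)=Tu_{a_n+\Delta_n-1}+T^{\Delta_n}u_0,
\]
and then, since $\Delta_n=c_{n-1}^{\mu_{n-1}}+t_{n-1}$ is a generic index by \eqref{param}, $T^{\Delta_n}u_0=u_{\Delta_n}$ and the conclusion $Tu_{a_n+\Delta_n-1}=u_{a_n+\Delta_n}-u_{\Delta_n}$ follows. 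Your final formula was right; only the intermediate identity was misstated. The disjointness checks you flag are indeed the only thing requiring attention, and \eqref{param} handles them.
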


We can then show that under some conditions, the operator $T$ can be extended continuously on $X$. We recall that if $P(T)=\sum_{n=0}^d \lambda_n T^n$ then we let $|P|=\sum_{n=0}^{d}|\lambda_n|$.

\begin{lemma}[Continuity]\label{cont}
If for every $n\ge 1$,
\begin{enumerate}
\item for every $1\le l\le n$, every $j\in [t_n,t_{n+1})$, we have
\begin{equation}
p_{l+1}(u_{j})\ge 2^{j+1} p_{l}(u_{j+1}),
\label{finalcont1}
\end{equation}
\item we have
\begin{equation}
p_{1}(u_{a_n-1})\ge 2^{a_n}p_{n-1}(u_0)
\label{finalcont2}
\end{equation}
\item we have
\begin{equation}
p_{1}(u_{a_n+\Delta_n-1})\ge 2^{a_n+\Delta_n}p_{n-1}(u_{\Delta_n}).
\label{finalcont3}
\end{equation}
\item for every $1\le k\le \mu_n$, we have
\begin{equation}
p_{1}(u_{c^k_n-1})\ge 2^{c_n^k}|Q_{n,k}|\sup_{j< t_n}p_n(T^ju_0)
\label{finalcont4}
\end{equation}
\item for every $0\le m\le \text{\emph{deg}}(Q_{n,k})$, we have
\begin{equation}
p_{n}(u_{t_n+m})\le \frac{1}{2^{c_n^k+t_n}|Q_{n,k}|}\min\{p_{1}(u_{c_n^k+t_n-1}),p_{1}(u_{c_n^k-1})\}
\label{finalcont5}
\end{equation}
\end{enumerate}
then for every $N\ge 1$, every $j\ge t_N$, we have $p_{N}(Tu_j) \le \frac{1}{2^j}p_{N+1}(u_j)$.
\end{lemma}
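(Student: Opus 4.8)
The plan is to fix $N\ge 1$ and $j\ge t_N$ and to run through the five mutually exclusive possibilities for $j$ given by Proposition~\ref{comp}: in each of them $Tu_j$ is $u_{j+1}$ plus at most one correction term (one of $u_0$, $-u_{\Delta_n}$, $Q_{n,k}(T)u_0$ or $-Q_{n,k}(T)u_{t_n}$), so it suffices to bound each summand by $2^{-(j+1)}p_{N+1}(u_j)$ and add.

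First I would record the index bookkeeping coming from \eqref{param}. If $j\in[t_n,t_{n+1})$ then $j\ge t_N$ forces $N\le n$; the inequalities $2\Delta_n<a_n$ and $a_n+\Delta_n<t_n$ put the indices $a_n-1$ and $a_n+\Delta_n-1$ strictly below $t_n$, so that when $j$ is one of them one even has $N\le n-1$ (and $n\ge 2$); and $3t_n+n<c_n$ together with $a_{n+1}>2\Delta_{n+1}$ puts $c_n^k-1$ and $c_n^k+t_n-1$ in $[t_n,t_{n+1})$. With this, \eqref{finalcont1} applied with $l=N$ gives $p_N(u_{j+1})\le 2^{-(j+1)}p_{N+1}(u_j)$, which settles the generic case $Tu_j=u_{j+1}$ outright and bounds the leading term in all remaining cases.

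For the correction terms: when $j=a_n-1$, one has $Tu_j=u_{j+1}+u_0$, and monotonicity of the seminorms with $N\le n-1$ and \eqref{finalcont2} gives $p_N(u_0)\le p_{n-1}(u_0)\le 2^{-a_n}p_1(u_{a_n-1})=2^{-(j+1)}p_1(u_j)\le 2^{-(j+1)}p_{N+1}(u_j)$; the case $j=a_n+\Delta_n-1$ is identical, using \eqref{finalcont3} to bound $p_N(u_{\Delta_n})$. When $j=c_n^k+t_n-1$ the correction is $Q_{n,k}(T)u_{t_n}$; here one first checks, again from \eqref{param}, that the orbit of $u_{t_n}$ meets none of the special indices during its first $\deg Q_{n,k}\le t_n+n$ steps, so that $Q_{n,k}(T)u_{t_n}=\sum_m q_m u_{t_n+m}$, and then the triangle inequality, $N\le n$ and \eqref{finalcont5} give $p_N(Q_{n,k}(T)u_{t_n})\le|Q_{n,k}|\max_{0\le m\le\deg Q_{n,k}}p_n(u_{t_n+m})\le 2^{-(c_n^k+t_n)}p_1(u_{c_n^k+t_n-1})=2^{-(j+1)}p_1(u_j)$.

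The delicate case, and the one I expect to be the main obstacle, is $j=c_n^k-1$, where $Tu_j=u_{j+1}+Q_{n,k}(T)u_0$ and one must control $\sum_{m=0}^{\deg Q_{n,k}}|q_m|\,p_N(T^mu_0)$. The key structural point — which requires unwinding the definition of $T$ on the iterates of $u_0$ and using that the corrections carried along in the $a_n$-block and the $c_n^k$-blocks cancel exactly at steps $a_n+\Delta_n$ and $c_n^k+t_n$ (Proposition~\ref{comp}(3) and (5)), together with $a_n+\Delta_n<t_n<t_n+n<c_n$ — is that $T^mu_0=u_m$ for every $m\in[t_n,c_n)$, in particular for $t_n\le m\le\deg Q_{n,k}$. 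Granting this, the sum splits into the part over $m<t_n$, bounded by $\bigl(\sum_{m<t_n}|q_m|\bigr)\sup_{m<t_n}p_n(T^mu_0)$ via \eqref{finalcont4}, and the part over $t_n\le m\le\deg Q_{n,k}$, bounded by $\bigl(\sum_{m\ge t_n}|q_m|\bigr)\max_{m}p_n(u_m)$ via \eqref{finalcont5}; writing $A+B=|Q_{n,k}|$ for these two coefficient sums, the total is at most $2^{-c_n^k}p_1(u_{c_n^k-1})\bigl(A/|Q_{n,k}|+2^{-t_n}B/|Q_{n,k}|\bigr)\le 2^{-c_n^k}p_1(u_{c_n^k-1})=2^{-(j+1)}p_1(u_j)\le 2^{-(j+1)}p_{N+1}(u_j)$, the extra factor $2^{-t_n}$ in \eqref{finalcont5} being exactly what lets the two contributions coexist. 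In every case, adding the bound on the leading term to the bound on the correction gives $p_N(Tu_j)\le 2\cdot 2^{-(j+1)}p_{N+1}(u_j)=2^{-j}p_{N+1}(u_j)$, as desired.
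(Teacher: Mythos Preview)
Your proof is correct and follows essentially the same case analysis as the paper: fix $j$ in an interval $[t_n,t_{n+1})$ (your indexing of the $a$-cases as $a_n-1$ with $N\le n-1$ is the paper's $a_{n+1}-1$ with $N\le n$, shifted by one), bound the leading term $u_{j+1}$ via \eqref{finalcont1}, and bound the correction term using the matching hypothesis \eqref{finalcont2}--\eqref{finalcont5}. The one cosmetic difference is that in the case $j=c_n^k-1$ you split the coefficient sum as $A+B=|Q_{n,k}|$ and bound by $(A+2^{-t_n}B)/|Q_{n,k}|\le 1$, whereas the paper simply uses $|Q_{n,k}|\cdot\max$ of the two suprema; both give the same $2^{-c_n^k}p_1(u_{c_n^k-1})$, and your remark that the extra $2^{-t_n}$ is ``exactly what lets the two contributions coexist'' is not quite right---the inequality $A+B\le|Q_{n,k}|$ already suffices---but this does not affect correctness.
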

\begin{proof}
Let $N\ge 1$ and $j\in [t_n,t_{n+1})$ with $n\ge N$. Thanks to Proposition~\ref{comp}, we get the following inequalities:
\begin{enumerate}
\item If $j\in  \mathbb{Z}_+\backslash\bigcup_{n\ge 1}\bigcup_{1\le k\le \mu_n}\{a_n-1,a_n+ \Delta_n-1,c_n^k-1,c_n^k+t_n-1\}$ then it follows from \eqref{finalcont1} that
\[p_{N}(Tu_j)=p_N(u_{j+1}) \le \frac{1}{2^j}p_{N+1}(u_j).\]
\item If $j=a_{n+1}-1$ then it follows from \eqref{finalcont1} and \eqref{finalcont2} that
\[p_N(T u_j)\le p_N(u_{a_{n+1}})+p_N(u_0)\le \frac{1}{2^{a_{n+1}-1}}p_{N+1}(u_{a_{n+1}-1}).\]
\item If $j=a_{n+1}+\Delta_{n+1}-1$ then it follows from \eqref{finalcont1} and \eqref{finalcont3} that
\[p_N(T u_{j})\le p_N(u_{a_{n+1}+\Delta_{n+1}})+p_N(u_{\Delta_{n+1}})\le \frac{1}{2^{a_{n+1}+\Delta_{n+1}-1}}p_{N+1}(u_{a_{n+1}+\Delta_{n+1}-1}).\]
\item If $j= c_n^k-1$ for some $1\le k\le \mu_n$ then since $\deg(Q_{n,k})\le t_n+n<c_n$, it follows from \eqref{finalcont1}, \eqref{finalcont4} and \eqref{finalcont5} that
\begin{align*}
p_{N}(Tu_j)&\le p_N(u_{c_n^k})+|Q_{n,k}|\sup_{m\le \text{deg}\, Q_{n,k}}p_{N}(T^mu_0)\\
&\le \frac{1}{2^{c^k_n}} p_{N+1}(u_{c_n^k-1})+|Q_{n,k}|\max\Big\{\sup_{m<t_n}p_{n}(T^mu_0),\sup_{0\le m\le \deg(Q_{n_k})-t_n}p_{n}(T^{t_n+m}u_0)\Big\}\\
&=  \frac{1}{2^{c^k_n}} p_{N+1}(u_{c_n^k-1})+|Q_{n,k}|\max\Big\{\sup_{m<t_n}p_{n}(T^mu_0),\sup_{0\le m\le \deg(Q_{n_k})-t_n}p_{n}(u_{t_n+m})\Big\}\\
&\le \frac{1}{2^{c^k_n-1}}p_{N+1}(u_{c^k_n-1}).
\end{align*}
\item If $j=c_n^k+t_n-1$ then since $t_n+\deg(Q_{n,k})\le 2t_n+n<c_n$, it follows from \eqref{finalcont1} and \eqref{finalcont5} that
\begin{align*}
p_N(T u_{j})&\le p_N(u_{c_n^k+t_n})+|Q_{n,k}|\sup_{m\le \text{deg}\, Q_{n,k}}p_N(T^mu_{t_n})\\
&\le p_N(u_{c_n^k+t_n})+|Q_{n,k}|\sup_{m\le \text{deg}\, Q_{n,k}}p_n(u_{t_n+m})\\
&\le \frac{1}{2^{c_n^k+t_n-1}}p_{N+1}(u_{c_n^k+t_n-1}).
\end{align*}
\end{enumerate}
\end{proof}

\begin{cor}\label{contnorm}
If \eqref{finalcont1}-\eqref{finalcont5} are satisified then the map $T$ defined on $\text{\emph{span}}\{u_j:j\ge 0\}$ can be uniquely extended to a continuous operator $T$ on $X$ satisfying for every $N\ge 1$, every $x\in X$, 
\[p_N(Tx)\le 4C_{N+1}L_Np_{N+2}(x),\] where $C_{N+1}$ is given by \eqref{basis} and $L_N$ only depends on $\{u_0,\dots,u_{t_N-1}\}$.
\end{cor}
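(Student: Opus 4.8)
The plan is to deduce Corollary~\ref{contnorm} from Lemma~\ref{cont} together with the basic-sequence estimate \eqref{basis}, essentially by splitting an arbitrary vector $x=\sum_{m\ge 0}x_m u_m$ into a ``low'' part supported on indices $<t_N$ and a ``high'' part supported on indices $\ge t_N$, applying Lemma~\ref{cont} to the high part and a crude finite-dimensional bound to the low part.

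First I would fix $N\ge 1$ and $x\in X$. Write $x=\sum_{m\ge 0}x_m u_m$ and set $x^{\mathrm{lo}}=\sum_{m<t_N}x_m u_m$ and $x^{\mathrm{hi}}=\sum_{m\ge t_N}x_m u_m$, so that $Tx=Tx^{\mathrm{lo}}+Tx^{\mathrm{hi}}$ by linearity on $\mathrm{span}\{u_j\}$ (and then by continuity once the extension is known to exist). For the high part, Lemma~\ref{cont} gives $p_N(Tu_j)\le 2^{-j}p_{N+1}(u_j)$ for every $j\ge t_N$; summing and using \eqref{param}/\eqref{basis} I would bound $p_N(Tx^{\mathrm{hi}})$ by $\sum_{m\ge t_N}|x_m|2^{-m}p_{N+1}(u_m)$. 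To turn this into a bound by $p_{N+2}(x)$ I need to control each $|x_m|p_{N+1}(u_m)$: since $(u_m)$ up to reordering is a multiple of a Schauder basis, one has $|x_m|p_{N+1}(u_m)=p_{N+1}(x_m u_m)\le p_{N+1}(\sum_{n\le m}x_n u_n)+p_{N+1}(\sum_{n<m}x_n u_n)\le 2C_{N+1}p_{N+2}(x)$ by \eqref{basis}, and then the geometric factor $\sum_m 2^{-m}\le 2$ absorbs the sum, giving $p_N(Tx^{\mathrm{hi}})\le 4C_{N+1}p_{N+2}(x)$.

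For the low part, $Tx^{\mathrm{lo}}$ lies in $\mathrm{span}\{u_0,\dots,u_{t_N}\}\cup\{\text{finitely many more terms from Proposition~\ref{comp}}\}$ — indeed from the defining formula for $T$, $Tu_j$ for $j<t_N$ is a fixed vector depending only on $u_0,\dots,u_{t_N-1}$ and the finitely many polynomials/indices entering for those $j$; so there is a constant $L_N'$ depending only on $\{u_0,\dots,u_{t_N-1}\}$ with $p_N(Tx^{\mathrm{lo}})\le L_N'\max_{m<t_N}|x_m|\le L_N' C p_{N+2}(x)$, again using \eqref{basis} to extract coefficients. Combining the two parts and absorbing constants into a single $L_N$ depending only on $\{u_0,\dots,u_{t_N-1}\}$ yields $p_N(Tx)\le 4C_{N+1}L_N p_{N+2}(x)$ for all $x$ in the dense subspace $\mathrm{span}\{u_j\}$; since the right-hand side is a continuous seminorm, $T$ extends uniquely to a continuous operator on $X$ satisfying the same inequality, and uniqueness is immediate from density.

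The main obstacle I expect is bookkeeping rather than conceptual: one must verify that $Tu_j$ for $j$ just below $t_N$ (the cases $j=a_n-1$, $a_n+\Delta_n-1$, $c_n^k-1$, $c_n^k+t_n-1$ of Proposition~\ref{comp}) really only involves vectors from $\{u_0,\dots,u_{t_N-1}\}$, which is exactly where the normalization \eqref{tn} and the inequalities \eqref{param} (ensuring $a_n,\Delta_n,\deg Q_{n,k}$ stay small relative to $t_n$) are used, and one must be careful that the geometric decay $2^{-j}$ from Lemma~\ref{cont} is strong enough to beat the basis constant $C_{N+1}$ uniformly in $m$ — which it is, since $C_{N+1}$ does not depend on $m$. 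Everything else is a routine triangle-inequality-plus-geometric-series estimate.
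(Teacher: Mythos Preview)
Your overall strategy coincides with the paper's: split into indices below and above $t_N$, apply Lemma~\ref{cont} to the high part, absorb the low part into a finite constant, and then use \eqref{basis} together with the geometric decay $2^{-j}$ to control the sum. There is, however, one step that does not go through as written. You expand $x=\sum_m x_m u_m$ in the $u$-ordering and then invoke \eqref{basis} on the partial sums $\sum_{n\le m} x_n u_n$; but \eqref{basis} is stated for the Schauder basis $(e_n)$, and $(u_m)$ is a \emph{reordering} (times nonzero scalars) of $(e_n)$. A reordered Schauder basis need not be a Schauder basis, and even when it is, its basis constant need not equal $C_{N+1}$. The block condition \eqref{tn} only guarantees that the $u$-partial sums and the $e$-partial sums agree at the special indices $t_n-1$, not at a generic $m$, so the inequality $p_{N+1}\bigl(\sum_{n\le m}x_n u_n\bigr)\le C_{N+1}p_{N+2}(x)$ is unjustified.

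The paper avoids this by never leaving the $e$-ordering: it writes $e_j=\lambda_{\sigma(j)}u_{\sigma(j)}$, keeps the outer sum indexed by $j$, and bounds
\[
p_{N+1}(x_j e_j)\le p_{N+1}\Bigl(\sum_{i\le j}x_i e_i\Bigr)+p_{N+1}\Bigl(\sum_{i<j}x_i e_i\Bigr)\le 2C_{N+1}\,p_{N+2}(x),
\]
which is precisely \eqref{basis}. The geometric weight coming from Lemma~\ref{cont} is then $2^{-\sigma(j)}$, and since $\sigma$ is a bijection one still has $\sum_j 2^{-\sigma(j)}=2$. The paper also handles the low part multiplicatively rather than additively: instead of bounding $p_N(Tx^{\mathrm{lo}})$ separately, it simply chooses $L_N\ge 1$ so that $p_N(Tu_m)\le L_N 2^{-m}p_{N+1}(u_m)$ holds for the finitely many $m<t_N$ as well (possible since $p_1$ is a norm, so $p_{N+1}(u_m)>0$), and then runs a single estimate for all $m$. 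With these two adjustments your argument becomes the paper's argument verbatim.
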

\begin{proof}
Let $N\ge 1$ and $x\in X$. By \eqref{basis}, there exists a constant $C_{N+1}$ such that for every $M\ge 0$, every $y=\sum_{j=0}^{\infty}y_je_j\in X$,
\[p_{N+1}\Big(\sum_{j=0}^{M}y_je_j\Big)\le C_{N+1} p_{N+2}(y).\]
 Let $\sigma$ be a bijection from $\mathbb{Z}_+$ to $\mathbb{Z}_+$ and $(\lambda_n)$ a sequence of non-zero scalars such that $e_j=\lambda_{\sigma(j)}u_{\sigma(j)}$.
Then there exists $L_N\ge 1$ depending on $\{u_0,\dots,u_{t_N-1}\}$ such that  
\begin{align*}
p_N(Tx)&=p_N\Big(T\big(\sum_{j=0}^{\infty}x_j\lambda_{\sigma(j)}u_{\sigma(j)}\big)\Big)\\
&\le \sum_{0\le \sigma(j)<t_N}p_{N}(T(x_j\lambda_{\sigma(j)} u_{\sigma(j)}))+ \sum_{\sigma(j)\ge t_N}\frac{1}{2^{\sigma(j)}}p_{N+1}(x_j\lambda_{\sigma(j)} u_{\sigma(j)})\\
&\le\sum_{0\le \sigma(j)<t_N}\frac{L_N}{2^{\sigma(j)}}p_{N+1}(x_j\lambda_{\sigma(j)} u_{\sigma(j)})+  \sum_{\sigma(j)\ge t_N}\frac{1}{2^{\sigma(j)}}p_{N+1}(x_j\lambda_{\sigma(j)} u_{\sigma(j)})\\
&\le L_N\sum_{j=0}^{\infty}\frac{1}{2^{\sigma(j)}}p_{N+1}(x_j\lambda_{\sigma(j)} u_{\sigma(j)})=L_N\sum_{j=0}^{\infty}\frac{1}{2^{\sigma(j)}}p_{N+1}(x_j e_j)\\
&\le L_N\sum_{j=0}^{\infty}\frac{1}{2^{\sigma(j)}}\Big(p_{N+1}\big(\sum_{i=0}^jx_i e_i\big)+p_{N+1}\big(\sum_{i=0}^{j-1}x_i e_i\big)\Big)\\
&\le L_N\sum_{j=0}^{\infty}\frac{1}{2^{\sigma(j)}}2C_{N+1}p_{N+2}(x)=4C_{N+1} L_N p_{N+2}(x),
\end{align*}
where the first inequality follows from Lemma~\ref{cont}.
\end{proof}
\begin{remark}
In the construction of Goli\'{n}ski, it is only required that for every $N\ge 1$, there exists a constant $C>0$ such that $p_N(Tu_j)\le Cp_N(u_j)$. The continuity of $T$ on $X$ can then only be obtained if $\sum_{j=0}^{\infty}p_N(x_ju_j)\le K p_N\Big(\sum_{j=0}^{\infty} x_n u_n\Big)$, i.e. if $p_N$ is a norm of type $\ell_1$ for the sequence $(u_j)_{j\ge 0}$.
\end{remark}

We are now going to investigate under which conditions we can assert that every non-zero vector of $X$ is hypercyclic, i.e. $\{T^nxn\ge 0\}$ is dense in $X$. We start by fixing the family of polynomials $Q_{n,k}$. The choice of this family will follow from the following lemma (see for instance \cite[Lemma 3]{Golinski2}).

\begin{lemma}\label{P}
Let $\varepsilon>0$, let $a$ and $t$ be positive integers with $t>a$ and $(\gamma_0,\dots,\gamma_{t-1})$ be a perturbed canonical basis of $\text{\emph{span}}(u_0,\dots, u_{t-1})$ satisfying 
\[\gamma_0=u_0 \quad \text{and} \quad \gamma_a=\varepsilon u_a+u_0.\]
Let $\|\cdot\|$ be a norm on $\text{span}\{u_j:j\ge 0\}$ and $K\subset \text{\emph{span}}(u_0,\dots, u_{t-1})$ be a compact set in the induced topology such that $\nu:=a-\text{val}_{\gamma}(K)\ge 0$.\\
Then there is a number $D\ge 1$ satisfying
\[\sum_{j=0}^{t-1}|\lambda_j|\le D \quad\text{for every $y=\sum_{j=0}^{t-1}\lambda_j \gamma_j\in K$}\]
and a finite family of polynomials $(P_l)_{l=1}^L$ satisfying for every $1\le l\le L$,
\[ \deg P_l< t \quad\text{and}\quad |P_l|\le D\]
such that for any $y \in K$ there is $1\le l\le L$ such that for each perturbed forward
shift $T:\text{\emph{span}}\{u_j:j\ge 0\}\to \text{\emph{span}}\{u_j:j\ge 0\}$ satisfying $T^ju_0=\gamma_j$ for every $1\le j\le t-1$, we have
\[\|P_l(T)y-u_0\|\le 2\varepsilon \|u_a\|+ D\max_{t\le j\le 2t}\|T^ju_0\|.\]
\end{lemma}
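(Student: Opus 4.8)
The plan is to represent every $y\in K$ in the form $Q(T)u_0$ for an explicit polynomial $Q$, factor out its valuation, invert the resulting unit as a formal power series, and then exploit the single structural relation $T^{a}u_0=\gamma_a=\varepsilon u_a+u_0$ to pull $Q(T)u_0$ back to $u_0$ up to an error living in the admissible window; compactness of $K$ will reduce the resulting (a priori uncountable) family of polynomials to a finite one. The number $D$ itself comes for free: since $K$ is a compact subset of the finite-dimensional space $\mathrm{span}(u_0,\dots,u_{t-1})$ and the $\gamma$-coordinate functionals $y=\sum_j\lambda_j\gamma_j\mapsto\lambda_j$ are continuous, $\sup_{y\in K}\sum_{j=0}^{t-1}|\lambda_j|$ is finite, and I would let $D$ be the maximum of this quantity, of the finitely many $|P_l|$ and $|g_l|+1$ produced below, and of $1$.

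For a single $y_0=\sum_{j\ge j_0}\lambda_j\gamma_j\in K$ with $\lambda_{j_0}\neq 0$ (and with $j_0\le a$, which is exactly what $\nu\ge 0$ provides), the identity $\gamma_j=T^ju_0$ for $0\le j\le t-1$ gives $y_0=Q(T)u_0$ with $Q(z)=z^{j_0}\widetilde Q(z)$, $\widetilde Q(0)=\lambda_{j_0}\neq 0$ and $\deg\widetilde Q\le t-1-j_0$. Since $\widetilde Q(0)\neq 0$ the formal power series $1/\widetilde Q$ exists; I would take $S$ to be its truncation of degree $M-1$, where $M:=t-a$, so that $S\widetilde Q=1+z^{M}g$ for a polynomial $g$ with $\deg g\le t-2$, and set $P_{y_0}(z):=z^{a-j_0}S(z)$, which has degree at most $t-1-j_0<t$. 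For any admissible $T$ one then computes
\[
P_{y_0}(T)y_0=\bigl(z^{j_0}P_{y_0}(z)\widetilde Q(z)\bigr)(T)u_0=\bigl(z^{a}+z^{t}g(z)\bigr)(T)u_0=T^{a}u_0+\bigl(z^{t}g(z)\bigr)(T)u_0=u_0+\varepsilon u_a+\bigl(z^{t}g(z)\bigr)(T)u_0 ,
\]
using $T^{a}u_0=\gamma_a=\varepsilon u_a+u_0$. Because $z^{t}g(z)$ has valuation $t$ and degree at most $2t-2$, its image at $u_0$ is a linear combination of the vectors $T^ju_0$ with $t\le j\le 2t$ whose coefficients sum in absolute value to $|g|$, so that $\|P_{y_0}(T)y_0-u_0\|\le\varepsilon\|u_a\|+|g|\max_{t\le j\le 2t}\|T^ju_0\|$, and $|g|$ and $|P_{y_0}|=|S|$ are bounded by explicit functions of $|\widetilde Q|$ and $|\lambda_{j_0}|^{-1}$.

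To obtain a finite family I would cover the compact set $K$ by finitely many balls $B(y^{(1)},r_1),\dots,B(y^{(L)},r_L)$ and put $P_l:=P_{y^{(l)}}$. For $y\in B(y^{(l)},r_l)$ write $P_l(T)y=P_l(T)y^{(l)}+P_l(T)(y-y^{(l)})$; the first term is handled by the previous step. For the second, since $\deg P_l<t$ and $y-y^{(l)}\in\mathrm{span}(\gamma_0,\dots,\gamma_{t-1})=\mathrm{span}(T^0u_0,\dots,T^{t-1}u_0)$, the vector $P_l(T)(y-y^{(l)})$ lies in $\mathrm{span}(T^0u_0,\dots,T^{2t-2}u_0)$ with $\ell^1$-coefficient norm at most $c_l\|y-y^{(l)}\|$ for a constant $c_l$ depending only on $P_l$ and the basis $(\gamma_j)$; splitting off the part with indices below $t$ (a combination of the fixed vectors $\gamma_0,\dots,\gamma_{t-1}$) and choosing $r_l$ small relative to $c_l$, $\max_{j<t}\|\gamma_j\|$ and $\varepsilon\|u_a\|$ — all quantities independent of $T$ — forces $\|P_l(T)(y-y^{(l)})\|\le\varepsilon\|u_a\|+\max_{t\le j\le 2t}\|T^ju_0\|$. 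Adding the two estimates gives the asserted bound, with the factor $2\varepsilon$ and with $D$ chosen, as above, to dominate all $|P_l|$ and all $|g_l|+1$.

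The delicate point is the simultaneous bookkeeping of degrees and valuations: the exponent $M=t-a$ is dictated by three competing requirements — that $\deg P_l<t$, that the residual polynomial $z^{a+M}g=z^{t}g$ be supported exactly in the admissible window $[t,2t]$ (so that it is absorbed by the error term and, crucially, never produces an uncontrolled power $T^ju_0$ with $j>2t$, about which the hypotheses say nothing), and that $z^{a}S\widetilde Q$ agree with $z^{a}$ modulo $z^{t}$. It is precisely here that $j_0\le a$, i.e. $\nu\ge 0$, is indispensable: if $j_0>a$ then $z^{a-j_0}$ is no longer a polynomial and no polynomial $P$ can satisfy $z^{j_0}P(z)\widetilde Q(z)\equiv z^{a}\pmod{z^{t}}$, so a genuinely different device (using more of the structure of $T$) would be needed for that range.
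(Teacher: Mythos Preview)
Your argument is correct and follows the standard route for this lemma in Read-type constructions: write $y=Q(T)u_0$, invert the unit part of $Q$ modulo $z^{t-a}$ so that $z^{a-j_0}S(z)Q(z)=z^{a}+z^{t}g(z)$, use the single relation $T^{a}u_0=\varepsilon u_a+u_0$, and then pass to a finite family by a compactness/covering argument with radii chosen small enough to absorb the perturbation into the two error terms. Your degree/valuation bookkeeping is clean (in particular $\deg(z^{t}g)\le 2t-2$, so the tail lives in the prescribed window $[t,2t]$), and your observation that the coefficients of $P_l(T)(y-y^{(l)})$ in the expansion $\sum_{m\le 2t-2}(\cdot)T^{m}u_0$ are determined by $P_l$ and the $\gamma$-coordinates of $y-y^{(l)}$---hence independent of the particular admissible $T$---is exactly what makes the covering argument uniform in $T$.

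The paper does not give its own proof of this lemma: it is stated with a reference to \cite[Lemma~3]{Golinski2}, and the only comment the paper adds (in the remark following Corollary~\ref{cor Pn}) is that the existence of $D$ comes from compactness of $K$ and finiteness of $(P_l)$, which is precisely what you say in your first paragraph. Your write-up is therefore a complete substitute for the cited argument and matches its strategy.
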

\begin{remark}
The existence of a constant $D\ge 1$ such that $\sum_{j=0}^{t-1}|\lambda_j|\le D$ for every $y=\sum_{j=0}^{t-1}\lambda_j \gamma_j\in K$ and such that $|P_l|\le D$ for every $1\le l\le L$ follows directly from the compacity of $K$ and from the fact that the family $(P_l)_{l=1}^L$ is finite. 
\end{remark}

We will consider the sequence of compact sets $(K_n)_{n\ge 2}$ given by 
\begin{equation}
K_n=\Big\{y\in \text{span}(u_0,\dots,u_{t_{n}-1})~:~ p_{1}(y)\le \frac{3}{2}\ \text{and}\  p_{1}(\tau_n y)\ge 1/2\Big\}
\end{equation}
where for every $n\ge 2$
\begin{equation}
\tau_n\Big(\sum_{j =0}^{t_{n}-1}y_jT^ju_0\Big)=\sum_{j=0}^{t_{n-1}-1}y_jT^ju_0.
\end{equation}
\begin{cor}\label{cor Pn}
For every $n\ge 2$, there exist $D_n\ge 1$ satisfying
\[\sum_{j=0}^{t_n-1}|y_j|\le D_n \quad\text{for every $y=\sum_{j=0}^{t_n-1}y_j T^ju_0\in K_n$}\]
and a family of polynomials $\mathcal{P}_n=(P_{n,k})_{k=1}^{k_n}$ satisfying 
\[\deg(P_{n,k})<t_n \quad\text{and}\quad |P_{n,k}|\le D_n\]
such that for each $y\in K_n$, there exists $1\le k\le k_n$ such that
\[p_{N_n}(P_{n,k}(T)y-u_0)\le 2p_{N_n}(u_{a_n})+ D_n\max_{t_n\le j\le 2t_n}p_{N_n}(u_j).\]
Moreover, $D_n$ and $\mathcal{P}_n$ do not depend on $u_j$ for $j\ge t_n$.
\end{cor}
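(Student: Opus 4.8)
The plan is to obtain Corollary~\ref{cor Pn} as a direct application of Lemma~\ref{P}. Fix $n\ge2$ and apply Lemma~\ref{P} with $\varepsilon=1$, $a=a_n$, $t=t_n$, with the perturbed canonical basis $\gamma_j=T^ju_0$ for $0\le j\le t_n-1$, with the norm $\|\cdot\|=p_{N_n}$ (recall that every $p_j$ is a norm under the normalisations fixed at the beginning of this section), and with $K=K_n$.

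Before quoting the conclusion I would verify the hypotheses. From \eqref{param} one has $a_n+\Delta_n<t_n$, so $t_n>a_n$; and from the definition of $T$ (Proposition~\ref{comp}) one has $\gamma_0=u_0$ and $\gamma_{a_n}=T^{a_n}u_0=u_{a_n}+u_0$, which is precisely the normalisation required in Lemma~\ref{P} with $\varepsilon=1$. The set $K_n$ is contained in the finite-dimensional space $\text{span}(u_0,\dots,u_{t_n-1})$ and is cut out there by the two continuous conditions $p_1(y)\le 3/2$ and $p_1(\tau_ny)\ge 1/2$, hence it is closed; it is bounded because $p_1$ is a norm; so $K_n$ is compact. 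Finally, $\nu:=a_n-\text{val}_{\gamma}(K_n)\ge0$: for $y=\sum_{j=0}^{t_n-1}y_jT^ju_0\in K_n$ the inequality $p_1(\tau_ny)\ge 1/2$ forces $y_j\ne0$ for some $j<t_{n-1}$, so $\text{val}_{\gamma}(K_n)\le t_{n-1}-1$, while by \eqref{param} (and $\Delta_n=c_{n-1}^{\mu_{n-1}}+t_{n-1}>t_{n-1}$) we have $a_n>2\Delta_n>t_{n-1}$, so $\nu>0$.

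Lemma~\ref{P} now furnishes $D_n\ge1$ with $\sum_{j=0}^{t_n-1}|y_j|\le D_n$ for every $y=\sum_{j=0}^{t_n-1}y_jT^ju_0\in K_n$, together with a finite family $\mathcal{P}_n=(P_{n,k})_{k=1}^{k_n}$ satisfying $\deg P_{n,k}<t_n$, $|P_{n,k}|\le D_n$, such that for every $y\in K_n$ there is $k$ with
\[p_{N_n}(P_{n,k}(T)y-u_0)\le 2p_{N_n}(u_{a_n})+D_n\max_{t_n\le j\le 2t_n}p_{N_n}(T^ju_0).\]
It then remains to identify the error term with the one in the statement, i.e.\ to check $T^ju_0=u_j$ for $t_n\le j\le 2t_n$. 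By the definition of $T$, $T^ju_0=u_j$ whenever $j$ lies in none of the intervals $[a_m,a_m+\Delta_m)$ or $[c_m^{\ell},c_m^{\ell}+t_m)$; and \eqref{param} together with $\Delta_{m+1}=c_m^{\mu_m}+t_m$ forces all such intervals with index $m\le n$ to lie entirely below $t_n$ and all those with $m\ge n+1$ to lie entirely above $2t_n$ — the case $m=n$ uses $c_n>3t_n+n>2t_n$. Hence $\max_{t_n\le j\le 2t_n}p_{N_n}(T^ju_0)=\max_{t_n\le j\le 2t_n}p_{N_n}(u_j)$, and the estimate of the corollary follows. The final assertion holds because $T^ju_0\in\text{span}(u_0,\dots,u_j)$ for every $j$, so $\gamma_0,\dots,\gamma_{t_n-1}$, the map $\tau_n$ and the set $K_n$ depend only on $u_0,\dots,u_{t_n-1}$; since Lemma~\ref{P} constructs $D$ and the polynomials $P_l$ purely from the data $(\gamma_j)$, the restriction of $p_{N_n}$ to $\text{span}(u_0,\dots,u_{t_n-1})$ and the integers $a_n,t_n$, and since $\deg P_{n,k}<t_n$, neither $D_n$ nor $\mathcal{P}_n$ involves $u_j$ for $j\ge t_n$.

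The only genuine work is combinatorial bookkeeping with \eqref{param}: one must read off from those inequalities both that $a_n$ exceeds $\text{val}_{\gamma}(K_n)$ and that the block $[t_n,2t_n]$ misses every perturbation interval, so that the iterates $T^ju_0$ occurring in the error term really are the reordered basis vectors $u_j$. Everything else is transcription of Lemma~\ref{P}.
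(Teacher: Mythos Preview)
Your proof is correct and follows exactly the same route as the paper: apply Lemma~\ref{P} with $\varepsilon=1$, $a=a_n$, $t=t_n$, the perturbed basis $\gamma_j=T^ju_0$, the norm $p_{N_n}$, and $K=K_n$, then observe $T^ju_0=u_j$ on $[t_n,2t_n]$. Your write-up is in fact more thorough than the paper's one-sentence proof, supplying the verification that $K_n$ is compact, that $\nu\ge0$, and that $D_n,\mathcal P_n$ depend only on $u_0,\dots,u_{t_n-1}$; the only imprecision is the phrase ``all such intervals with index $m\le n$ lie entirely below $t_n$'', since for $m=n$ the intervals $[c_n^\ell,c_n^\ell+t_n)$ lie above $2t_n$ rather than below $t_n$---but you immediately note $c_n>2t_n$, so the argument is sound.
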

\begin{proof}
It suffices to apply Lemma~\ref{P} to $K_n$ and the perturbed canonical basis $(u_0,Tu_0,\dots,T^{t_n-1}u_0)$ by remarking that $\nu\ge a_n-t_{n-1}\ge 0$, that $\varepsilon=1$ since $T^{a_n}u_0=u_{a_n}+u_0$ and that $T^j u_0=u_j$ for every $j\in [t_n, 2t_n]$.
\end{proof}

If we assume that $u_0,\cdots, u_{t_n-1}$ are constructed, we can then consider the family $\mathcal{P}_n=(P_{n,k})_{k=1}^{k_n}$ given by Corollary~\ref{cor Pn} since this family does not depend on the elements $u_j$ with $j\ge t_n$. Moreover, we can consider a net $\mathcal{S}_n=(S_{n,w})_{w=1}^{W_n}$ of polynomials such that $\text{deg}(S_{n,w})\le R_n$, $|S_{n,w}|\le R_n$ and such that for every polynomial $S$ satisfying $\text{deg}(S)\le R_n$ and $|S|\le R_n$, there exists $1\le w\le W_n$ satisfying 
\[
|S-S_{n,w}|\le \frac{1}{\max_{l\le R_n}p_{N_n}(T^lu_0)}.\]
We then consider for $(Q_{n,l})_{l=1}^{\mu_n}$ the enumeration of polynomials $P_{n,k}S_{n,w}$. In particular, we get 
\begin{equation}
\text{deg}\, Q_{n,l}\le t_n+n \quad \text{and}\quad |Q_{n,l}|\le nD_n
\end{equation}
since $R_n\le n$.

The following lemma shows that under some conditions, we can truncate any non-zero vector $x$ such that the truncated part belongs to $K_n$ for some $n$.

\begin{lemma}[Sets $K_n$]\label{Kn}
If for every $n\ge 1$, every $j\in \bigcup_{l=1}^{\mu_{n}}[c_{n}^l,c_{n}^l+t_{n})\cup [a_{n+1},a_{n+1}+\Delta_{n+1})$
\begin{equation}
\label{K1}
p_{N_{n+1}+2}(u_j)\ge nD_{n} 2^j\sup_{l< t_{n}}p_{1}(T^{l}u_0),
\end{equation}
then for every $N$, every sequence $(n_k)$ such that $N_{n_k}=N$ and every $x\in X$ with $p_1(x)=1$, we have that for all but finitely many $k$
\[\pi_{t_{n_k}}x\in K_{n_k}\]
where $\pi_{t_{n_k}}x=\sum_{l=0}^{t_{n_k-1}}x_l e_l$. 
\end{lemma}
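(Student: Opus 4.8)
The goal is to show that for a fixed level $N$, a fixed increasing sequence $(n_k)$ realizing $N_{n_k}=N$, and a fixed vector $x\in X$ with $p_1(x)=1$, the truncation $\pi_{t_{n_k}}x=\sum_{l=0}^{t_{n_k}-1}x_l e_l$ lands in $K_{n_k}$ for all but finitely many $k$. Unpacking the definition of $K_{n_k}$ (after rewriting the partial sum in the basis $(T^ju_0)_{j<t_{n_k}}$, which is legitimate since $T^ju_0$ and $e_0,\dots,e_{t_{n_k}-1}$ span the same finite-dimensional space by \eqref{tn}), this amounts to two estimates for large $k$: first, an upper bound $p_1(\pi_{t_{n_k}}x)\le \tfrac32$, and second, a lower bound $p_1(\tau_{n_k}\pi_{t_{n_k}}x)\ge \tfrac12$.

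For the upper bound, the plan is to use that $p_1(\pi_{t_{n_k}}x)\to p_1(x)=1$ as $k\to\infty$. This convergence is exactly the statement that the partial sums of $x$ with respect to the Schauder basis $(e_n)$ converge to $x$ in the $p_1$-seminorm, which holds because $(e_n)$ is a Schauder basis; so $p_1(\pi_{t_{n_k}}x)<\tfrac32$ for $k$ large. For the lower bound, I would argue that $\tau_{n_k}\pi_{t_{n_k}}x$ is, after translating through the coordinate identifications, essentially the partial sum $\sum_{l=0}^{t_{n_k-1}-1}x_l e_l$ (the map $\tau_{n_k}$ keeps only the coefficients of $T^ju_0$ for $j<t_{n_k-1}$, and by \eqref{tn} these correspond to $e_0,\dots,e_{t_{n_k-1}-1}$). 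Hence $p_1(\tau_{n_k}\pi_{t_{n_k}}x)\to p_1(x)=1$ as well, since $t_{n_k-1}\to\infty$; so $p_1(\tau_{n_k}\pi_{t_{n_k}}x)>\tfrac12$ for $k$ large. Combining the two gives $\pi_{t_{n_k}}x\in K_{n_k}$ for all but finitely many $k$.

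The subtle point is precisely the bookkeeping between the three descriptions of the same finite-dimensional subspace: the canonical basis $(e_l)_{l<t_n}$, the reordered multiples $(\lambda_{\sigma(l)}u_{\sigma(l)})$, and the perturbed orbit $(T^ju_0)_{j<t_n}$. One must check that the coefficients $x_l$ of $x$ in the $e_l$-expansion are the same ones appearing in $\pi_{t_n}x=\sum_{l<t_n}x_l e_l$ when this is rewritten as $\sum_{j<t_n}y_j T^ju_0$ restricted by $\tau_n$ to $j<t_{n-1}$; this is where \eqref{tn} is used in an essential way, since it guarantees $\{e_0,\dots,e_{t_n-1}\}=\{\lambda_0u_0,\dots,\lambda_{t_n-1}u_{t_n-1}\}$ as sets, and together with the triangularity of the map $u_j\mapsto T^ju_0$ (Proposition~\ref{comp}) this forces $\operatorname{span}\{e_l:l<m\}=\operatorname{span}\{T^ju_0:j<m\}$ for every $m\le t_n$, so that $\pi_m$ and the corresponding truncation in the $(T^ju_0)$-basis agree. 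I expect hypothesis \eqref{K1} to play no role in \emph{this} lemma's conclusion as stated — it is recorded here so that it is available later — so the main obstacle is genuinely just the careful identification of truncations across bases, after which the two seminorm-convergence statements are immediate from the Schauder basis property.
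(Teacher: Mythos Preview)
Your upper bound argument is fine, but the lower bound contains a genuine error, and your dismissal of hypothesis \eqref{K1} is precisely where it occurs.

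You assert that $\tau_{n}\pi_{t_{n}}x$ coincides with $\pi_{t_{n-1}}x$. This would follow if the two projections had the same kernel, but they do not. Both project onto $\operatorname{span}\{T^ju_0:j<t_{n-1}\}=\operatorname{span}\{e_l:l<t_{n-1}\}$ (this identification is correct), yet $\tau_n$ kills $\operatorname{span}\{T^ju_0:t_{n-1}\le j<t_n\}$ while $\pi_{t_{n-1}}$ kills $\operatorname{span}\{e_l:t_{n-1}\le l<t_n\}=\operatorname{span}\{u_j:t_{n-1}\le j<t_n\}$. These complements differ: for $j\in[a_n,a_n+t_{n-1})$ one has $u_j=T^ju_0-T^{j-a_n}u_0$ with $j-a_n<t_{n-1}$, so $\tau_n u_j=-T^{j-a_n}u_0\neq 0$; similarly $\tau_n u_j=-\tau_nQ_{n-1,l}(T)T^{j-c_{n-1}^l}u_0\neq 0$ for $j\in[c_{n-1}^l,c_{n-1}^l+t_{n-1})$. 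Thus $\tau_n$ does \emph{not} annihilate $\pi_{[t_{n-1},t_n)}x$, and the discrepancy $\tau_n\pi_{t_n}x-\pi_{t_{n-1}}x=\tau_n\pi_{[t_{n-1},t_n)}x$ must be estimated.

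This is exactly where \eqref{K1} enters: it forces $p_1(\tau_n u_j)\le 2^{-j}p_{N_n+2}(u_j)$ for the offending indices $j$, and then summing and using the basis constant $C_{N_n+2}$ (together with $N_{n_k}=N$ fixed) gives $p_1(\tau_{n_k}\pi_{[t_{n_k-1},t_{n_k})}x)\to 0$. Only after this correction does one conclude $p_1(\tau_{n_k}\pi_{t_{n_k}}x)\to 1$. So \eqref{K1} is not bookkeeping for later use; it is the content of the lemma. (A secondary issue: your claim that $\operatorname{span}\{e_l:l<m\}=\operatorname{span}\{T^ju_0:j<m\}$ for \emph{every} $m\le t_n$ is not justified by \eqref{tn}, which only guarantees this at the values $m=t_{n'}$; but this is not needed once the main argument is repaired.)
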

\begin{proof}
We recall that \begin{equation*}
K_n=\Big\{y\in \text{span}(u_0,\dots,u_{t_{n}-1})~:~ p_{1}(y)\le \frac{3}{2}\ \text{and}\  p_{1}(\tau_n y)\ge 1/2\Big\}
\end{equation*}
and that
\begin{equation*}
\tau_n\Big(\sum_{m =0}^{t_{n}-1}y_mT^mu_0\Big)=\sum_{m=0}^{t_{n-1}-1}y_mT^mu_0.
\end{equation*}
Let $n\ge 2$. We start by computing $p_{1}(\tau_n u_j)$ for every $j\in [0,t_n)$.
\begin{itemize}
\item If $j< t_{n-1}$ then $\tau_n u_j=u_j$.
\item If $j\ge t_{n-1}$ and $j\notin \bigcup_{l=1}^{\mu_{n-1}}[c_{n-1}^l,c_{n-1}^l+t_{n-1})\cup [a_n,a_n+\Delta_n)$ then $\tau_n u_j=0$. 
\item If $j\in [c_{n-1}^l,c_{n-1}^l+t_{n-1})$ then $\tau_n u_j=-\tau_nQ_{n-1,l}T^{j-c_{n-1}^l}u_0$ and thus by~\eqref{K1}
\[p_{1}(\tau_n u_j)\le |Q_{n-1,l}|\max_{0\le m< t_{n-1}}p_{1}(T^{m}u_0)\le \frac{1}{2^j} p_{N_{n}+2}(u_j)\]
since $|Q_{n-1,l}|\le (n-1)D_{n-1}$.
\item If $j\in [a_n,a_n+t_{n-1})$ then $\tau_n u_j=-T^{j-a_n}u_0$ and thus by \eqref{K1}
\[p_{1}(\tau_n u_j)=p_{1}(T^{j-a_n}u_0)\le \frac{1}{2^j} p_{N_{n}+2}(u_j).\]
\item If $j\in [a_n+t_{n-1},a_n+\Delta_n)$ then $\tau_n u_j=-\tau_nT^{j-a_n}u_0=0$.
\end{itemize}
Let $y\in \text{span}\{u_j:t_{n-1}\le j< t_{n}\}$. We deduce that 
\begin{align*}
p_{1}(\tau_n y)&\le \sum_{j=c_{n-1}}^{a_n+\Delta_n-1}p_{1}(y_j \tau_n u_j)\le \sum_{j=c_{n-1}}^{a_n+\Delta_n-1}\frac{1}{2^j}p_{N_{n}+2}(y_j u_j)\\
&\le \sum_{j=c_{n-1}}^{a_n+\Delta_n-1}\frac{2C_{N_{n}+2}}{2^j}p_{N_{n}+3}(y)\le \frac{2C_{N_{n}+2}}{2^{c_{n-1}-1}}p_{N_{n}+3}(y).
\end{align*}
where $C_{N_{n}+2}$ is given by \eqref{basis}.\\
Let $N\ge 1$, let $(n_k)$ be a sequence such that $N_{n_k}=N$ and let $x=\sum_{i=0}^{\infty}x_i e_i\in X$ with $p_1(x)=1$. It follows from \eqref{tn} that for every $n\ge 2$, $\text{span}\{u_{t_{n-1}},\dots,u_{t_n-1}\}=\text{span}\{e_{t_{n-1}},\dots,e_{t_n-1}\}$ and therefore, if we let $\pi_{[I,J)}x=\sum_{i=I}^{J-1}x_i e_i$, we get
\begin{align*}
p_1(\tau_{n_k}\pi_{t_{n_k}}x)&=p_1(\tau_{n_k}\pi_{[0,t_{n_k-1})}x+\tau_{n_k}\pi_{[t_{n_k-1},t_{n_k})}x)\\
&\ge p_1(\pi_{t_{n_k-1}}x)-p_1(\tau_{n_k}\pi_{[t_{n_k-1},t_{n_k})}x)\\
& \ge p_1(\pi_{t_{n_k-1}}x)-\frac{2C_{N+2}}{2^{c_{n_k-1}-1}}p_{N+3}(\pi_{[t_{n_k-1},t_{n_k})}x)\xrightarrow[k\to \infty]{} 1.
\end{align*}
We conclude that for all but finitely many $k$, $\pi_{t_{n_k}}x\in K_{n_k}$.
\end{proof}

Let $x\in X$ such that $p_1(x)=1$. Thanks to Corollary~\ref{cor Pn} and Lemma~\ref{Kn}, we can obtain interesting approximations with the elements $Q_{n,k}(T)(\pi_{t_n}x)$. However, in order to prove that $x$ is a hypercyclic vector, we still need to be able to control the tail $x-\pi_{t_n}x$.

\begin{lemma}[Tails]\label{tail}
Let $(C_{j})_{j\ge 1}$ be the basis constants given by \eqref{basis}.
If for every $n\ge 1$, every $j\in [t_n,t_{n+1})$
\begin{enumerate}
\item for every $l\le n$, every $1\le r\le c_{n-1}^{\mu_{n-1}}$,
\begin{equation}
\label{tail1}
p_{l+2}(u_{j})\ge 2^{j+2}C_{n+2} p_l(u_{j+r});
\end{equation}
\item for every $1\le k\le \mu_n$,
\begin{equation}
\label{tail2}
p_{N_n+2}(u_{j})\ge 2^{j+2} C_{n+2}p_{N_n}(u_{j+c_n^k});
\end{equation} 
\item if $j\in [a_{n+1}+\Delta_{n+1}-c_n^{\mu_n},a_{n+1}+\Delta_{n+1})$ then 
\begin{equation}
\label{tail3}
p_{1}(u_j)\ge 2^{a_{n+1}+\Delta_{n+1}+1}C_{n+2}\sup_{m\in [\Delta_{n+1},\Delta_{n+1}+c_n^{\mu_n})}p_{n}(u_m);
\end{equation} 
\item if $j\in [a_{n+1}-c_n^{\mu_n},a_{n+1})$ then
\begin{equation}
\label{tail4}
p_1(u_j)\ge 2^{a_{n+1}+1}C_{n+2}\sup_{m\le c_{n}^{\mu_n}}p_n(T^m u_0);
\end{equation} 
\item if $j\in [c_n^{k},c_n^{k}+t_n)$ with $1\le k\le \mu_n$ then
\begin{equation}
\label{tail5}
p_1(u_j)\ge 2^{c_n^{k}+t_n+2}C_{n+2} D_n\sup_{m\in [t_n,3t_n+n)}p_{n}(u_m)
\end{equation} 
and
\begin{equation}
\label{tail6}
p_{N_{n}+2}(u_j)\ge 2^{c_n^{k}+t_n+2}C_{n+2}D_n \sup_{m\in \bigcup_{1\le k'\le \mu_n}[c_n^{k'},c_{n}^{k'}+2t_n+n]}p_{N_n}(u_m);
\end{equation} 
\item if $j\in [c_n^{k}-c_{n'}^{k'},c_n^{k})$ with $(n',k')<(n,k)$ then
\begin{equation}
\label{tail7}
p_{1}(u_j)\ge 2^{c_n^k+1}C_{n+2}D_n\sup_{m\le 2t_n+n}p_{n}(T^mu_0);
\end{equation} 
\end{enumerate}
then for every $n\ge 1$, every $1\le k \le \mu_n$, every $x\in \overline{\text{\emph{span}}}\{e_j:j\ge t_n\}$, we have
\[p_{N_n}(T^{c_n^k}x)\le p_{N_n+3}(x).\]  
\end{lemma}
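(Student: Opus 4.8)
The plan is to peel $T$ off one reordered basis vector at a time. Fix $n$ and $1\le k\le\mu_n$, write $c=c_n^k$, and note $c\le c_n^{\mu_n}<\Delta_{n+1}$ by \eqref{param}. Since $T^{c}$ and $p_{N_n+3}$ are continuous (Corollary~\ref{contnorm}) and, by \eqref{tn}, the permutation $\sigma$ preserves each initial segment $\{0,\dots,t_m-1\}$ — so that $\overline{\text{span}}\{e_j:j\ge t_n\}=\overline{\text{span}}\{u_i:i\ge t_n\}$ — it suffices to prove the inequality for finite combinations $x=\sum_i x_iu_i$ with $i\ge t_n$. For such $x$, writing $i\in[t_m,t_{m+1})$ (so $m\ge n$, using that $(t_n)$ is strictly increasing), subadditivity of $p_{N_n}$ together with the basis-constant inequality \eqref{basis} — which gives $p_{N_n+2}(x_iu_i)\le 2C_{N_n+2}p_{N_n+3}(x)$, where $C_{N_n+2}\le C_{m+2}$ because $N_n\le n\le m$ — reduces everything to the per-vector estimate
\[
p_{N_n}\bigl(T^{c}u_i\bigr)\ \le\ \frac{1}{2^{\,i+1}\,C_{m+2}}\;p_{N_n+2}(u_i),\qquad i\in[t_m,t_{m+1}),\ m\ge n,
\]
since then $p_{N_n}(T^{c}x)\le\sum_{i\ge t_n}C_{N_n+2}C_{m+2}^{-1}2^{-i}p_{N_n+3}(x)\le p_{N_n+3}(x)$.

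To prove the per-vector estimate I would expand $T^{c}u_i$ by iterating Proposition~\ref{comp}. Because $c<\Delta_{n+1}$ is small compared with the spacing of successive index blocks, \eqref{param} forces the forward orbit $i,i+1,\dots,i+c-1$ to stay inside $[t_m,c_{m+1})$, so it can only meet the $c$-spots $c_m^l-1$, $c_m^l+t_m-1$ of block $m$ and the $a$-spots $a_{m+1}-1$, $a_{m+1}+\Delta_{m+1}-1$ of block $m+1$. Hence $T^{c}u_i$ equals the leading term $u_{i+c}$ (coefficient $1$, since the main branch merely continues the shift at each crossing) plus finitely many corrections, each of one of the types: $(\mathrm a)$ a shift $\pm u_{i+r}$, $r<c$, from crossing $a_{m+1}+\Delta_{m+1}-1$ — the resulting branch $-u_{\Delta_{m+1}}$ propagates forward but cannot re-branch, as $\Delta_{m+1}+c<2\Delta_{m+1}<a_{m+1}$; $(\mathrm b)$ a branch $\pm T^{\rho}u_0$, $\rho<c$, from crossing $a_{m+1}-1$; $(\mathrm c)$ branches $\pm Q_{m,l}(T)T^{\rho}u_0$ or $\pm Q_{m,l}(T)T^{\rho}u_{t_m}$, $\rho<c$, from crossing $c_m^l-1$ or $c_m^l+t_m-1$, where $\deg Q_{m,l}\le t_m+m$ and $|Q_{m,l}|\le mD_m$. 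Crucially the branches of types $(\mathrm b)$–$(\mathrm c)$ are iterates of the \emph{fixed} low-index vectors $u_0,u_{t_m}$ by fewer than $c\le c_n^{\mu_n}$ steps, so they need only be majorised, not expanded: using $p_{N_n}\le p_n\le p_m$, $|Q_{m,l}|\le mD_m$, $\deg Q_{m,l}\le t_m+m$, their $p_{N_n}$-norm is controlled precisely by the suprema appearing on the right-hand sides of \eqref{tail4}, \eqref{tail5}, \eqref{tail6}, \eqref{tail7}.

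It then remains to read off, according to which subinterval of $[t_m,t_{m+1})$ contains $i$, the hypothesis that bounds each surviving term by a multiple of $C_{m+2}^{-1}2^{-(i+2)}p_{N_n+2}(u_i)$: the leading term $u_{i+c}$ by \eqref{tail2} when $m=n$ (so $c=c_n^k$ is the admissible shift) and by \eqref{tail1} when $m>n$ (so $r:=c\le c_{m-1}^{\mu_{m-1}}$ is admissible); the $u_{\Delta_{m+1}}$-branch by \eqref{tail3}; the $u_0$-branch from $a_{m+1}-1$ by \eqref{tail4}; the $Q_{m,l}$-branches near and inside the plateaux of the $c$-spots of block $m$ by \eqref{tail5}–\eqref{tail6}, and those met just to the left of a $c$-spot by \eqref{tail7}. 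In each case the exponent in the invoked hypothesis (a power $2^{j+2}$, $2^{a_{m+1}+1}$, $2^{c_m^k+t_m+2}$, and so on) dominates $2^{i+2}$ because $i$ sits below the relevant special index, and the normalisation $p_{j+1}\ge 2p_j$ supplies the extra room to add up the finitely many corrections; combined with the first paragraph this yields $p_{N_n}(T^{c_n^k}x)\le p_{N_n+3}(x)$.

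The main obstacle is the combinatorial bookkeeping: writing $T^{c}u_i$ correctly (the recursion generated by Proposition~\ref{comp} genuinely branches, and one must invoke \eqref{param} repeatedly to confine the orbit to two consecutive blocks, to see that the $\Delta$-branches die out, and to recognise that the low-index branches, though themselves complicated vectors, need only be estimated), and then matching every resulting term to the correct one of the seven hypotheses while keeping the seminorm indices straight ($N_n\le n\le m$, hence $p_{N_n}\le p_m$ and $C_{N_n+2}\le C_{m+2}$). The most delicate point is the case $m=n$, where the orbit can meet comparably many of the $c$-spots of block $m$ as $\mu_m$; there the generous exponents $2^{c_m^l+t_m+2}$ in \eqref{tail5}–\eqref{tail6}, which beat $2^{i+2}$ by a margin growing as $i$ recedes below $c_m^l+t_m$, are what make the sum of corrections converge. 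None of the individual estimates is deep; the work is entirely in making the case analysis exhaustive and precise.
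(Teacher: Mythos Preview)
Your reduction to the per-vector estimate $p_{N_n}(T^{c}u_i)\le 2^{-(i+1)}C_{m+2}^{-1}\,p_{N_n+2}(u_i)$ for $i\in[t_m,t_{m+1})$, $m\ge n$, and the overall case-by-case strategy match the paper's proof exactly. The gap is in how you propose to compute $T^{c}u_i$. Iterating Proposition~\ref{comp} step by step produces, in the case $m=n$, far more branches than the hypotheses can absorb: if $i$ lies in a gap $[c_n^{l-1}+t_n,c_n^l)$ and $c=c_n^{k}$ with $k\ge l$, the orbit $[i,i+c)$ crosses every pair $c_n^{l'}-1,\ c_n^{l'}+t_n-1$ for $l\le l'\le k$, and the crossing of $c_n^{l'}-1$ spawns a branch $Q_{n,l'}(T)T^{\rho}u_0$ with $\rho=i+c_n^{k}-c_n^{l'}$, which for $l'=l$ is of order $c_n^{k}$. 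None of \eqref{tail5}--\eqref{tail7} controls such a term: \eqref{tail7} only supplies $\sup_{s\le 2t_n+n}p_n(T^su_0)$, and \eqref{tail5}--\eqref{tail6} apply only when $i$ itself sits in a plateau $[c_n^{l},c_n^{l}+t_n)$, not in a gap. Your ``generous exponents'' remark does not rescue this, because the obstruction is not the size of the exponent but the fact that the relevant supremum on the right of \eqref{tail7} does not reach far enough.

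What the paper does instead is bypass the step-by-step iteration and use the defining relation $u_j=T^{j}u_0-(\text{correction})$ directly: since $T^{c}u_j=T^{j+c}u_0-T^{c}(\text{correction for }u_j)$, one only has to locate $j$ and $j+c$ in the block structure. In the troublesome case above both $i$ and $i+c_n^{k}$ lie in clean ranges (the latter because $i+c_n^{k}\in(c_n^{k}+t_n,2c_n^{k})\subset(c_n^{k}+t_n,c_n^{k+1})$), so $T^{c_n^{k}}u_i=u_{i+c_n^{k}}$ with \emph{no} correction --- the many step-by-step branches you would see cancel in pairs, since the branch from $c_n^{l'}-1$ is $+Q_{n,l'}(T)T^{\rho}u_0$ and that from $c_n^{l'}+t_n-1$ is $-Q_{n,l'}(T)T^{\rho-t_n}u_{t_n}=-Q_{n,l'}(T)T^{\rho}u_0$. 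The paper's closed-form computation thus yields at most one surviving correction term in every sub-case, and that single term is precisely what each of \eqref{tail3}--\eqref{tail7} is tailored to absorb. Either switch to the $T^{j}u_0$ formula, or make the pairwise cancellations explicit in your step-by-step expansion; without one of these the argument for $m=n$ does not close.
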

\begin{proof}
Let $n\ge 1$, $1\le k \le \mu_n$ and $x\in \overline{\text{span}}\{e_j:j\ge t_n\}$. We first remark that if for every $j\ge t_n$, 
\[ p_{N_n}(T^{c_n^k}u_j)\le \frac{1}{2^{j+1}C_{n+2}}p_{N_{n}+2}(u_j)\]
then we have
\[p_{N_n}(T^{c_n^k}x)\le p_{N_n+3}(x).\]  
Indeed, since $\text{span}\{u_j:j\ge t_n\}=\text{span}\{e_j:j\ge t_n\}$ by \eqref{tn}, if we write $e_j=\lambda_{\sigma(j)}u_{\sigma(j)}$ then
\begin{align*}
p_{N_n}(T^{c_n^k}x)&\le  \sum_{j=t_n}^{\infty}|x_j|p_{N_n}(T^{c_n^k}e_j)\le \sum_{j=t_n}^{\infty}|x_j|\frac{1}{2^{\sigma(j)+1}C_{n+2}}p_{N_{n}+2}(e_j)\\
&\le \sum_{j=t_n}^{\infty}\frac{C_{N_n+2}}{2^{\sigma(j)}C_{n+2}}p_{N_{n}+3}(x)\le p_{N_{n}+3}(x) \quad\text{since $(C_j)_j$ is an increasing sequence}.
\end{align*}
Let $j\in [t_n,t_{n+1})$, $n'\le n$ and $1\le k'\le \mu_{n'}$. It is thus sufficient to show that if Conditions \eqref{tail1}-\eqref{tail7} are satisfied then we have
\begin{equation}\label{but}
 p_{N_{n'}}(T^{c_{n'}^{k'}}u_j)\le \frac{1}{2^{j+1}C_{n+2}}p_{N_{n'}+2}(u_j).
 \end{equation}
 We discuss the different possibilities for $j$.
\begin{itemize}
\item If $j\in[a_{n+1}+\Delta_{n+1},t_{n+1})$ then 
\[T^{c_{n'}^{k'}}u_j=u_{j+c_{n'}^{k'}}\]
since $t_{n+1}+c_n^{\mu_n}<c_{n+1}$ and \eqref{but} follows from \eqref{tail1} for $n'<n$ and from \eqref{tail2} for $n'=n$.
\item If $j\in [a_{n+1}+\Delta_{n+1}-c_{n'}^{k'} , a_{n+1}+\Delta_{n+1})$ then
\[T^{c_{n'}^{k'}}u_j=T^{c_{n'}^{k'}}T^ju_0-T^{c_{n'}^{k'}}T^{j-a_{n+1}}u_0=u_{j+c_{n'}^{k'}}-u_{j-a_{n+1}+c_{n'}^{k'}}\]
since $j+c_{n'}^{k'}\in [a_{n+1}+\Delta_{n+1},c_{n+1})$ and since $j-a_{n+1}+c_{n'}^{k'}\in [\Delta_{n+1},\Delta_{n+1}+c_{n'}^{k'})$ and $\Delta_{n+1}+c_n^{\mu_n}<a_{n+1}$.
It follows from \eqref{tail1}, \eqref{tail2} and \eqref{tail3}  that
\begin{align*}
 p_{N_{n'}}(T^{c_{n'}^{k'}}u_j)&\le \frac{1}{2^{j+2}C_{n+2}}p_{N_{n'}+2}(u_j)+\frac{1}{2^{a_{n+1}+\Delta_{n+1}+1}C_{n+2}}p_{1}(u_j)\\
&\le \frac{1}{2^{j+1}C_{n+2}}p_{N_{n'}+2}(u_j).
\end{align*}

\item If $j\in [a_{n+1},a_{n+1}+\Delta_{n+1}-c_{n'}^k)$ then $T^{c_{n'}^{k'}}u_j=u_{j+c_{n'}^{k'}}$ and \eqref{but} follows from \eqref{tail1} and \eqref{tail2}.

\item If $j\in [a_{n+1}-c_{n'}^{k'}, a_{n+1})$ then
\[T^{c_{n'}^{k'}}u_j=T^{j+c_{n'}^{k'}}u_{0}=u_{j+c_{n'}^{k'}}+T^{j+c_{n'}^{k'}-a_{n+1}}u_0.\]
since $a_{n+1}-c_{n'}^{k'}>c_n^{\mu_n}+t_n$ and $j+c_{n'}^{k'}\in [a_{n+1},a_{n+1}+c_{n'}^{k'})\subset [a_{n+1},a_{n+1}+\Delta_{n+1})$. It follows from \eqref{tail1}, \eqref{tail2} and \eqref{tail4}  that
\begin{align*}
 p_{N_{n'}}(T^{c_{n'}^{k'}}u_j)&\le \frac{1}{2^{j+2}C_{n+2}}p_{N_{n'}+2}(u_j)+\frac{1}{2^{a_{n+1}+1}C_{n+2}}p_{1}(u_j)\\
&\le \frac{1}{2^{j+1}C_{n+2}}p_{N_{n'}+2}(u_j).
\end{align*}

\item If $j\in [c_n^{\mu_n}+t_n,a_{n+1}-c_{n'}^{k'})$ then 
$T^{c_{n'}^{k'}}u_j=u_{j+c_{n'}^{k'}}$ and \eqref{but} follows from \eqref{tail1} and \eqref{tail2}.

\item  If $j\in [c_n^{k},c_n^{k}+t_n)$ for some $1\le k\le \mu_n$ then we have to investigate different cases: 
\begin{itemize}
\item If $n'=n$ then
\[T^{c_{n'}^{k'}}u_j=u_{j+c_{n'}^{k'}}-T^{c_{n'}^{k'}}T^{j-c_n^{k}}Q_{n,k}(T)u_0\]
because if we let $K=\max\{k,k'\}$ then \[j+c_{n'}^{k'}\in [c_n^{k}+c_{n}^{k'},c_n^{k}+c_{n}^{k'}+t_n)\subset [c_n^{K}+t_n,2c_n^K+t_n)\] and $2c_n^{K}+t_n<c_{n}^{K+1}$ if $K<\mu_n$ and $2c_n^{\mu_n}+t_n<a_{n+1}$. Moreover, since
$c_{n}^{k'}\le \text{val}(T^{c_{n'}^{k'}}T^{j-c_n^{k}}Q_{n,k}(T))\le \text{deg}(T^{c_{n'}^{k'}}T^{j-c_n^{k}}Q_{n,k}(T))\le c_{n}^{k'}+2t_n+n$, it follows from \eqref{tail2} and \eqref{tail6} that
\begin{align*}
 p_{N_{n'}}(T^{c_{n'}^{k'}}u_j)&\le \frac{1}{2^{j+2}C_{n+2}}p_{N_{n'}+2}(u_j)+D_n \sup_{m\in [c_{n}^{k'},c_{n}^{k'}+2t_n+n]} p_{N_{n'}}(T^m u_0)\\
 &\le \frac{1}{2^{j+2}C_{n+2}}p_{N_{n'}+2}(u_j)+\frac{1}{2^{c_{n}^{k}+t_n+2}C_{n+2}}p_{N_{n'}+2}(u_j)\\
&\le \frac{1}{2^{j+1}C_{n+2}}p_{N_{n'}+2}(u_j).
\end{align*}
\item If $n'<n$ and $j\in [c_n^{k}+t_n-c_{n'}^{k'},c_n^{k}+t_n)$ then 
\[T^{c_{n'}^{k'}}u_j=u_{j+c_{n'}^{k'}}-T^{c_{n'}^{k'}}T^{j-c_n^{k}}Q_{n,k}(T)u_0\]
since  $j+c_{n'}^{k'}\in [c_{n}^{k}+t_n,c_n^{k}+2t_n)$ and $c_n^{k}+2t_n<c_{n}^{k+1}$ if $k<\mu_n$ and $c_n^{\mu_n}+2t_n<a_{n+1}$. Moreover, since $t_n\le\text{val}(T^{c_{n'}^{k'}}T^{j-c_n^{k}})\le \text{deg}(T^{c_{n'}^{k'}}T^{j-c_n^{k}})< 2t_n$, it follows from \eqref{tail1} and \eqref{tail5} that
\begin{align*}
 p_{N_{n'}}(T^{c_{n'}^{k'}}u_j)&\le \frac{1}{2^{j+2}C_{n+2}}p_{N_{n'}+2}(u_j)+D_n \sup_{m\in [t_n,3t_n+n)} p_{n}(T^m u_0)\\
 &\le \frac{1}{2^{j+2}C_{n+2}}p_{N_{n'}+2}(u_j)+\frac{1}{2^{c_n^{k}+t_n+2}C_{n+2}}p_{1}(u_j)\\
&\le \frac{1}{2^{j+1}C_{n+2}}p_{N_{n'}+2}(u_j).
\end{align*}
\item If $n'<n$ and $j\in [c_n^{k},c_n^{k}+t_n-c_{n'}^{k'})$ then 
$T^{c_{n'}^{k'}}u_j=u_{j+c_{n'}^{k'}}$
 and \eqref{but} follows from \eqref{tail1}. 
\end{itemize}

\item Finally, if $j\in [c_n^{k-1}+t_n,c_n^{k})$ for some $1\le k\le \mu_n$ or if $j=t_n$,  we investigate several cases:
\begin{itemize}
\item If $n'=n$ and $k'\ge k$ then $T^{c_{n'}^{k'}}u_j=u_{j+c_{n'}^{k'}}$ since \[c_{n'}^{k'}+j\in [c_{n}^{k'}+t_n, c_{n}^{k'}+c_{n}^{k})\subset [c_{n}^{k'}+t_n, c_{n}^{k'+1})\]
and \eqref{but} follows from \eqref{tail2}.  
\item If $n'= n$, $k'<k$ and $j\in [c_n^{k}-c_{n'}^{k'}+t_n,c_n^{k})$, we also get
$T^{c_{n'}^{k'}}u_j=u_{j+c_{n'}^{k'}}$
since $c_n^{k}-c_{n'}^{k'}+t_n\ge c_n^{k-1}+t_n$ and since $j+c_{n'}^{k'}\in [c_n^k+t_n,c_n^{k+1})$ if $k<\mu_n$ and $j+c_{n'}^{k'}\in [c_n^{\mu_n}+t_n,a_{n+1})$ otherwise. Condition \eqref{but} then follows from \eqref{tail2}.

\item If $n'=n$, $k'<k$ and $j\in [c_n^{k}-c_{n'}^{k'},c_n^{k}-c_{n'}^{k'}+t_n)$, we get
\[T^{c_{n'}^{k'}}u_j=u_{j+c_{n'}^{k'}}+T^{c_{n'}^{k'}}T^{j-c_n^{k}}Q_{n,k}(T)u_0\]
since $c_n^{k}-c_{n'}^{k'}>c_n^{k-1}+t_n$. Moreover, since $\text{deg}( T^{c_{n'}^{k'}}T^{j-c_n^{k}}Q_{n,k}(T))\le 2t_n+n$, it follows from \eqref{tail2} and \eqref{tail7} that
\begin{align*}
 p_{N_{n'}}(T^{c_{n'}^{k'}}u_j)&\le \frac{1}{2^{j+2}C_{n+2}}p_{N_{n'}+2}(u_j)+D_n \sup_{m\le 2t_n+n} p_{n}(T^m u_0)\\
 &\le \frac{1}{2^{j+2}C_{n+2}}p_{N_{n'}+2}(u_j)+\frac{1}{2^{c_n^{k}+1}C_{n+2}}p_{1}(u_j)\\
&\le \frac{1}{2^{j+1}C_{n+2}}p_{N_{n'}+2}(u_j).
\end{align*}

\item In a similar way, if $n'<n$  and if $j\in [c_n^{k}-c_{n'}^{k'},c_n^{k})$ then
\[T^{c_{n'}^{k'}}u_j=u_{j+c_{n'}^{k'}}+T^{c_{n'}^{k'}}T^{j-c_n^{k}}Q_{n,k}(T)u_0\]
and since $\text{deg}( T^{c_{n'}^{k'}}T^{j-c_n^{k}}Q_{n,k'}(T))\le 2t_n+n$, \eqref{but} follows from \eqref{tail1} and \eqref{tail7}.
\item Finally, if $(n',k')<(n,k)$ and if $j\in [c_n^{k-1}+t_n,c_n^{k}-c_{n'}^{k'})$ then 
$T^{c_{n'}^{k'}}u_j=u_{j+c_{n'}^{k'}}$ and \eqref{but} follows from \eqref{tail1}.
\end{itemize}
\end{itemize}
\end{proof}

Thanks to all these results, we are now able to write a list of conditions such that if the sequence $(u_n)_{n\ge 0}$ satisfies each of these conditions then $T$ admits no non-trivial invariant subset.

\begin{lemma}[Final result]\label{finalresult}
Under the assumptions of Lemmas~\ref{cont}, \ref{Kn} and \ref{tail}, if for every $n\ge 1$, we have
\begin{enumerate}
\item for every $0\le j< t_n$,
\begin{equation}
\label{final1}
p_{N_n}(u_{c_n^k+j})\le \frac{1}{D_n}
\end{equation}
\item 
\begin{equation}
\label{final2}
p_{N_n}(u_{a_n})\to 0
\end{equation}
\item for every  $t_n\le j\le 2t_n$
\begin{equation}
\label{final3}
p_{N_n}(u_j)\le \frac{1}{2^nD_n}
\end{equation}
\end{enumerate}
 then every non-zero vector in $X$ is hypercyclic for $T$ and thus $T$ has no non-trivial invariant subset.
\end{lemma}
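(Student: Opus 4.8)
The plan is a Read-type hypercyclicity argument. I will fix a non-zero $x\in X$, a target $z\in X$, a tolerance $\varepsilon>0$ and an index $N$, and produce an iterate $T^{c_n^{l}}x$ that is $\varepsilon$-close to $z$ in $p_N$ for a suitable large stage $n$. Since hypercyclicity is invariant under non-zero scalar multiples and $p_1$ is a norm, I may assume $p_1(x)=1$. The first reduction is that it suffices to approximate the vectors $S(T)u_0$ for polynomials $S$: indeed $\overline{\mathrm{span}}\{T^ju_0:j\ge0\}=\overline{\mathrm{span}}\{u_j:j\ge0\}=X$, so $u_0$ is cyclic, and any $z$ is first approximated by some $S(T)u_0$ with $\deg S$ and $|S|$ prescribed; the universality of the sequences $(N_n)$, $(R_n)$, $(t_n)$ then lets me pick $n$ realizing any required pair of values.

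Second, I split $T^{c_n^{l}}x=T^{c_n^{l}}(\pi_{t_n}x)+T^{c_n^{l}}(x-\pi_{t_n}x)$. The tail $x-\pi_{t_n}x$ lies in $\overline{\mathrm{span}}\{e_j:j\ge t_n\}$, so Lemma~\ref{tail} gives $p_{N_n}\big(T^{c_n^{l}}(x-\pi_{t_n}x)\big)\le p_{N_n+3}(x-\pi_{t_n}x)\to0$ as $n\to\infty$. For the head, Lemma~\ref{Kn} shows $\pi_{t_n}x\in K_n$ for all large $n$ with $N_n=N$; writing $\pi_{t_n}x=\sum_{j<t_n}\gamma_j T^ju_0$ in the perturbed canonical basis, Corollary~\ref{cor Pn} provides $D_n$ with $\sum_j|\gamma_j|\le D_n$ and an index $k$ with $p_{N_n}(P_{n,k}(T)\pi_{t_n}x-u_0)\le 2p_{N_n}(u_{a_n})+D_n\max_{t_n\le j\le2t_n}p_{N_n}(u_j)$, which tends to $0$ by \eqref{final2} and \eqref{final3}. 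Choosing $S_{n,w}$ from the net $\mathcal{S}_n$ that approximates a fixed polynomial $S$ with $S(T)u_0\approx z$, and taking $l$ with $Q_{n,l}=P_{n,k}S_{n,w}$, Proposition~\ref{comp} (equivalently the definition of $T$ on $[c_n^l,c_n^l+t_n)$) yields
\[
T^{c_n^{l}}(\pi_{t_n}x)=\sum_{j<t_n}\gamma_j u_{c_n^{l}+j}+Q_{n,l}(T)\pi_{t_n}x=\sum_{j<t_n}\gamma_j u_{c_n^{l}+j}+S_{n,w}(T)\big(u_0+v_n\big),
\]
where $v_n=P_{n,k}(T)\pi_{t_n}x-u_0$. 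The first sum is bounded in $p_{N_n}$ by $\big(\sum_j|\gamma_j|\big)\max_{j<t_n}p_{N_n}(u_{c_n^l+j})$, which \eqref{final1} together with $\sum_j|\gamma_j|\le D_n$ forces to be negligible; and $S_{n,w}(T)(u_0+v_n)$ equals $S_{n,w}(T)u_0\approx S(T)u_0\approx z$ plus the amplified error $S_{n,w}(T)v_n$, controlled through the continuity estimate of Corollary~\ref{contnorm} since $\deg S_{n,w},|S_{n,w}|\le R_n$ and $v_n$ is supported on a controlled range of $u_j$'s. Letting $n\to\infty$ along $\{n:N_n=N,\ R_n=\max(\deg S,|S|)\}$ then gives $p_N(T^{c_n^l}x-z)<\varepsilon$ for $n$ large, proving $x$ hypercyclic; the "no invariant subset" conclusion is then immediate, since a non-empty closed $T$-invariant $F\ne\{0\}$ contains a non-zero vector whose dense orbit forces $F=X$.

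The hard part is the simultaneous bookkeeping of the error terms. Each of the three sources — the shifted copy $\sum\gamma_j u_{c_n^l+j}$, the polynomial defect $S_{n,w}(T)v_n$, and the tail $T^{c_n^l}(x-\pi_{t_n}x)$ — must be driven to $0$ in the single seminorm $p_N=p_{N_n}$, while $T$ genuinely increases the seminorm index (exactly the feature exploited throughout the paper). This is why the hypotheses are calibrated as they are: the powers of $2$, the basis constants $C_j$ from \eqref{basis}, the compactness constants $D_n$, and the precise index shifts in \eqref{finalcont1}–\eqref{finalcont5}, \eqref{K1} and \eqref{tail1}–\eqref{tail7} are arranged so that applying $T$ up to $c_n^{\mu_n}$ times to vectors supported beyond $u_{t_n}$ costs only a bounded, summable amount in a fixed seminorm, after which conditions \eqref{final1}–\eqref{final3} absorb the remaining stage-$n$ constants $D_n$. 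Checking that the case distinctions already handled in Lemmas~\ref{cont}, \ref{Kn} and \ref{tail} combine to make the total error vanish — and in particular that the choices of the polynomial families $\mathcal{P}_n$ and $\mathcal{S}_n$ (hence of the $Q_{n,k}$) remain compatible with \eqref{final1}–\eqref{final3} at the construction stage — is the bulk of the remaining work.
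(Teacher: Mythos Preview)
Your overall architecture matches the paper's proof exactly: split $T^{c_n^l}x$ into head and tail, control the tail by Lemma~\ref{tail}, place the head in $K_n$ via Lemma~\ref{Kn}, use Corollary~\ref{cor Pn} to produce $P_{n,k}$, pick $S_{n,w}$ from the net, and set $Q_{n,l}=P_{n,k}S_{n,w}$. The decomposition $T^{c_n^l}(\pi_{t_n}x)=\sum_{j<t_n}\gamma_j u_{c_n^l+j}+S_{n,w}(T)(u_0+v_n)$ is also exactly the paper's.

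There is, however, a concrete gap in your index bookkeeping. You run $n\to\infty$ along $\{n:N_n=N\}$, so Corollary~\ref{cor Pn} only gives $p_{N}(v_n)\to0$. But to bound $p_N\big(S_{n,w}(T)v_n\big)$ you invoke Corollary~\ref{contnorm}, which costs two seminorm indices per application of $T$: iterating, $p_N(T^j\,\cdot\,)\le C(j,N)\,p_{N+2j}(\,\cdot\,)$. Since $\deg S_{n,w}\le R_n$, you need $p_{N+2R_n}(v_n)$ small, not $p_N(v_n)$. Your remark that ``$v_n$ is supported on a controlled range of $u_j$'s'' does not rescue this: $v_n\in\mathrm{span}\{u_0,\dots,u_{2t_n-1}\}$, a space whose dimension grows with $n$, so no uniform equivalence of $p_N$ and $p_{N+2R_n}$ is available there. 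The paper fixes this by choosing the subsequence with $N_n=N+2R_n$ (still constant, since $R_n$ is determined by the fixed polynomial $S$), so that Corollary~\ref{cor Pn} directly yields $p_{N+2R_n}(v_n)\to0$ and the continuity estimate lands in $p_N$.

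One smaller point: the term $\sum_j|\gamma_j|\max_{j<t_n}p_{N_n}(u_{c_n^l+j})$ is not ``negligible'' but only $\le D_n\cdot\frac{1}{D_n}=1$ by \eqref{final1}. The paper does not drive every piece to $0$; it shows $p_N(T^{c_n^k}x-z)\le10$, which suffices because the standing assumption $p_{j+1}\ge2p_j$ means approximating every target within a fixed constant in every $p_N$ already gives density.
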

\begin{proof}
Let $x\in X$ such that $p_1(x)=1$. Let $z\in X$ and $N\ge 1$. Since we have assumed that $p_{n+1}\ge 2 p_n$ for every $n\ge 1$, it suffices to show that there exists $i\ge 0$ such that $p_N(T^ix-z)\le 10$ in order to deduce that every non-zero vector is hypercyclic.

Since $u_0$ is cyclic, there exists a polynomial $S$ such that
\[p_N(S(T)u_0-z)\le 1.\]
We can then find $n$ as large as desired such that 
\begin{itemize}
\item $R_{n}=\max\{\text{deg}(S),\lceil|S|\rceil\}$,
\item $N_n=N+2R_n$,
\item $y:=\pi_{[0,t_n)}x\in K_n$,
\item $p_{N_n+3}(x-y)=p_{N_n+3}(\pi_{[t_n,+\infty)}x)\le 1$.
\item $p_{N_n}(u_{a_n})\le \frac{1}{2^n}$.
\end{itemize}

By the definition of net $\mathcal{S}_n$, there exists $S_{n,w}\in \mathcal{S}_n$ such that
$|S-S_{n,w}|\le \frac{1}{\max_{l\le R_n}p_{N_n}(T^lu_0)}$ with $\text{deg}S_{n,w}\le R_n$ and $|S_{n,w}|\le R_n$. In particular, we have
\[p_{N_n}(S(T)u_0-S_{n,w}(T)u_0)\le |S-S_{n,w}| \max_{l\le R_n}p_{N_n}(T^lu_0)\le 1.\]

By Corollary~\ref{cor Pn}, there exists a polynomial $P_{n,l}$ such that 
\[p_{N_n}(P_{n,l}(T)y-u_0)\le 2p_{N_n}(u_{a_n})+D_n\max_{t_n\le j\le 2t_n}p_{N_n}(u_j)\le\frac{3}{2^n}\quad \text{by \eqref{final3}}\] and thus by Corollary~\ref{contnorm}
\begin{align*}
p_N\big(S_{n,w}(T)(P_{n,l}(T)y-u_0)\big)&\le R_n \max_{k\le R_n}p_{N}\big(T^k(P_{n,l}(T)y-u_0)\big)\\
&\le R_n \Big(\prod_{l=0}^{R_n-1}\max\{4C_{N+2l+1}L_{N+2l},1\}\Big)p_{N+2R_n}(P_{n,l}(T)y-u_0)\\
&\le R_n \Big(\prod_{l=0}^{R_n-1}\max\{4C_{N+2l+1}L_{N+2l},1\}\Big)p_{N_n}(P_{n,l}(T)y-u_0)\to 0
\end{align*}
as $n\to \infty$ since $R_n$ does not depend on $n$.\\

Let $k$ such that $Q_{n,k}=S_{n,w}P_{n,l}$, it follows from Lemma~\ref{tail} that
\[p_{N_n}(T^{c_n^k}(x-y))\le p_{N_n+3}(x-y)\le 1.\]
Since 
\begin{align*}
p_{N_n}(T^{c_n^k}x-z)&\le p_{N_n}\big(T^{c_n^k}(x-y)\big)+p_{N_n}(T^{c_n^k}y-Q_{n,k}(T)y)+p_{N_n}\big(S_{n,w}(T)(P_{n,l}(T)y-u_0)\big)\\
&\quad+ p_{N_n}(S_{n,w}(T)u_0-S(T)u_0)+ p_{N_n}(S(T)u_0-z),
\end{align*}
it remains to look at $p_{N_n}(T^{c_n^k}y-Q_{n,k}(T)y)$. If we write $y=\sum_{j=0}^{t_n-1}y_jT^ju_0$ then by Corollary~\ref{cor Pn}, we have 
\[\sum_{j=0}^{t_n-1}|y_j|\le D_n.\]
 Therefore, since
\[T^{c_n^k}y-Q_{n,k}(T)y=\sum_{j=0}^{t_n-1}y_j(T^{c_n^k+j}u_0-Q_{n,k}(T)T^ju_0)
=\sum_{j=0}^{t_n-1}y_j u_{c_n^k+j},\]
we deduce from \eqref{final1} that
\[p_{N_n}(T^{c_n^k}y-Q_{n,k}(T)y)\le D_{n}\max_{j< t_n}p_{N_n}(u_{c_n^k+j})\le 1.\]
We conclude that if $n$ is sufficiently big then $p_{N}(T^{c_n^k}x-z)\le 10$. In other words, $x$ is hypercyclic and therefore every non-zero vector is hypercyclic.
\end{proof}

It remains to prove that if $T$ satisfies the assumptions of Theorem~\ref{technique} then it is possible to construct a sequence $(u_j)_{j\ge 1}$ satisfying \eqref{tn}, \eqref{finalcont1}-\eqref{finalcont5}, \eqref{K1}, \eqref{tail1}-\eqref{tail7} and \eqref{final1}-\eqref{final3}.

\begin{proof}[End of the proof of Theorem~\ref{technique}]
We recall that if for every $j \le J$, every $m\le M$,
\begin{enumerate}
\item $p_{J-1}(\alpha_{1,m}e_{n_{1,m}})\le \varepsilon$ 
\item $p_1(\alpha_{J,m}e_{n_{J,m}})\ge \frac{1}{\varepsilon}$
\item for every $1\le l\le M$, if $\next^l(j,m)$ exists then for every $k< K$,
\[C p_k(\alpha_{\next^{l}(j,m)}e_{n_{\next^{l}(j,m)}})\le p_{k+1}(\alpha_{j,m}e_{n_{j,m}}),\]
\end{enumerate}
then it follows from (2) and (3) that $p_{J}(\alpha_{j,m}e_{n_{j,m}})\ge \frac{1}{\varepsilon}$ for every $j\le J$ and every $m\le M$. Condition (3) will be mainly used to satisfy the condition of continuity \eqref{finalcont1} and the conditions \eqref{tail1} and \eqref{tail2}.

Assume that $u_0,...u_{t_n-1}$ have been chosen and that $t_n$ is a multiple of $n+1$. We let $c_n=t_n+(n+1)(2t_n+n)$, $a_{n+1}=(n+1)\Delta_{n+1}+c_n^{\mu_n}+t_n$ and we recall that $\Delta_{n+1}=c_{n}^{\mu_n}+t_n$.  Each inequality concerning these parameters is then satisfied (see \eqref{param}).

We start by choosing elements $u_j$ for $j\in \bigcup_{k=1}^{\mu_n}[c_n^k,c_n^{k}+t_n)$. For theses indices, we mainly want that $p_{N_n}(u_j)$ is small 
and that $p_{N_{n}+2}(u_j)$ is big. 
The elements $u_j$ for $j\in [c_n^k,c_n^{k}+t_n)$ are chosen such that
\begin{enumerate}
\item $p_{l+1}(u_j)\ge 2^{j+1} p_l(u_{j+1})$ for every $l\le n$, every $j\in [c_n^{k},c_n^{k}+t_n-1)$ (cf. \eqref{finalcont1})
\item $p_{N_{n+1}+2}(u_j)\ge nD_n2^j\sup_{l< t_n}p_{1}(T^lu_0)$ for every $j\in [c_n^{k},c_n^{k}+t_n)$ (cf. \eqref{K1})
\item $p_{l+2}(u_j)\ge 2^{j+2}C_{n+2}p_{l}(u_{j+r})$ for every $l\le n$, every $1\le r\le c_{n-1}^{\mu_{n-1}}$, every $j\in [c_n^{k},c_n^{k}+t_n-r)$ (cf. \eqref{tail1}).
\item $p_{N_{n}+2}(u_j)\ge  2^{c_n^{k}+t_n+2}C_{n+2}$ for every $j\in [c_n^{k},c_n^{k}+t_n)$ (cf. \eqref{tail6})
\item $p_{N_n}(u_j)\le \frac{1}{D_n}$ for every $j\in [c_n^{k},c_n^{k}+t_n)$ (cf. \eqref{tail6} and \eqref{final1})
\end{enumerate}
To this end, we use the assumptions of Theorem~\ref{technique} with 
$\varepsilon$ sufficiently small, $C=2^{c_{n}^k+t_n+1}C_{n+2}$, $J=N_n+1$, $K=n+1$, $M=t_n$
 and $N$ which is the maximal index of elements of the basis $(e_k)$ which have already been used.  We can then let $u_{c_n^{k}+m-1}=\alpha_{1,m}e_{n_{1,m}}$ and since $M=t_n$ and $N_{n+1}+2\ge N_n+1$, we get the desired inequalities.\\

We now choose elements $u_j$ for $j\in [t_n,c_n)$. We mainly need that $p_{n}(u_j)$ is small for $j\in [t_n,3t_n+n)$ 
and that $p_{1}(u_j)$ is big for $j\in [c_n-c_{n-1}^{\mu_{n-1}},c_n)$.
The elements $u_{t_n},\dots,u_{c_n-1}$ will be chosen such that
\begin{enumerate}
\item $p_{l+1}(u_j)\ge 2^{j+1} p_l(u_{j+1})$ for every $l\le n$, every $j\in [t_n,c_n-1)$ (cf. \eqref{finalcont1})
\item $p_{1}(u_{c_n-1})\ge \max\{2^{c_n}p_{n}(u_{c_n}),D_n 2^{c_n}\sup_{j<t_n}p_n(T^ju_0)\}$ (cf. \eqref{finalcont1} and \eqref{finalcont4})
\item $p_{n}(u_{t_n})\le \frac{1}{2^{t_n}}p_{1}(u_{t_n-1})$ (cf. \eqref{finalcont1})
\item $p_{n}(u_{j})\le \frac{1}{2^{c_n^{\mu_n}+t_n}D_n}\min\{p_1(u_{l}):l\in\bigcup_{k=1}^{\mu_n}\{c_n^{k}+t_n-1\}\}$ for every $j\in [t_n,2t_n+n]$ (cf. \eqref{finalcont5}) 
\item $p_{l+2}(u_j)\ge 2^{j+1}p_{l}(u_{j+r})$ for every $l\le n$, every $1\le r\le c_{n-1}^{\mu_{n-1}}$, every $j\in [t_n,c_n-r)$ (cf
. \eqref{tail1})
\item $p_1(u_j)\ge 2^{c_n+1}\max_{l\in[c_n,c_n+t_n)}p_n(u_l)$ for every $1\le r\le c_{n-1}^{\mu_{n-1}}$, every $j\in [c_n-c_{n-1}^{\mu_{n-1}},c_n)$ (cf. \eqref{tail1})
\item $p_{n}(u_j)\le 2^{t_n+1}\min_{t_n-c_{n-1}^{\mu_{n-1}}\le l< t_n} p_1(u_l)$ for every $j\in [t_n,t_n+c_{n-1}^{\mu_{n-1}}]$ (cf. \eqref{tail1} and \eqref{tail2})
\item $p_{n}(u_{j})\le \frac{1}{2^{c_n^{\mu^n}+t_n+2}C_{n+2}D_n}\min\{p_1(u_j):j\in \bigcup_{k=1}^{\mu_n}[c_n^k,c_n^k+t_n)\}$ for every $j\in [t_n,3t_{n}+n)$ (cf. \eqref{tail5})
\item $p_1(u_j)\ge 2^{c_n+2}C_{n+2}D_n\sup_{m<t_n}p_n(T^mu_0)$ for every $j\in [c_n-c_{n-1}^{\mu_{n-1}},c_{n})$ (cf. \eqref{tail7}).
\item $p_n(u_j)\le \sup_{m<t_n}p_n(T^mu_0)$ for every $j\in [t_n,2t_n+n)$ (cf. \eqref{tail7}) 
\item $p_{N_n}(u_j)\le \frac{1}{2^nD_n}$ for every $j\in [t_n,2t_n]$ (cf. \eqref{final3}).
\end{enumerate}
To this end, we use the assumptions of Theorem~\ref{technique} with $\varepsilon$ sufficiently small, $J=n+1$, $K=n+1$, $M=(c_n-t_n)/(n+1)$ and $N$ which is the maximal index  of elements of the basis $(e_k)$ which have already been used. We can then let $u_{t_n+(j-1)M+m-1}=\alpha_{j,m}e_{n_{j,m}}$ and since $M\ge 2t_n+n$, we get the desired inequalities. We remark that it follows from (11) that $\sup_{m\le 2t_n+n}p_n(T^mu_0)\le \sup_{m<t_n}p_n(T^mu_0)$ and thus that \eqref{tail7} is satisfied for $k=1$.\\

Let $1\le k< \mu^n$. We complete by choosing the elements $u_j$ for $j\in [c_n^k+t_n,c_n^{k+1})$. We want that $p_{n}(u_j)$ is small for $j\in [c^k_n+t_n,2c^k_n+t_n)$ and that $p_{1}(u_j)$ is big for $j\in [c^{k+1}_n-c_{n}^{k},c^{k+1}_n)$. 
The elements $u_{c_n^{k}+t_n},\dots,u_{c_n^{k+1}-1}$ are chosen such that
\begin{enumerate}
\item $p_{l+1}(u_j)\ge 2^{j+1} p_l(u_{j+1})$ for every $l\le n$, every $j\in [c_n^{k}+t_n,c_n^{k+1}-1)$ (cf. \eqref{finalcont1})
\item $p_{1}(u_{c_n^{k+1}-1})\ge \max\{2^{c_n^{k+1}}p_{n}(u_{c_n^{k+1}}),D_n 2^{c_n^{k+1}}\sup_{l<t_n}p_n(T^lu_0)\}$ (cf. \eqref{finalcont1} and \eqref{finalcont4})
\item $p_{n}(u_{c^k_n+t_n})\le \frac{1}{2^{c^k_n+t_n}}p_{1}(u_{c^k_n+t_n-1})$ (cf. \eqref{finalcont1})
\item $p_1(u_{c_{n}^{k+1}-1})\ge 2^{c_n^{\mu_n}+t_n}D_n \sup_{j\in [t_n,2t_n+n]}p_{n}(u_{j})$ (cf. \eqref{finalcont5})
\item $p_{l+2}(u_j)\ge 2^{j+2}p_{l}(u_{j+r})$ for every $l\le n$, every $r\le c_{n}^{k}$, every $j\in [c_n^{k}+t_n,c_n^{k+1}-r)$ (cf. \eqref{tail1} and \eqref{tail2})
\item $p_{1}(u_j)\ge 2^{c_n^{k+1}+1} \max_{l\in [c_n^{k+1},c_n^{k+1}+t_n)}p_{n}(u_{l})$ for every $j\in [c_n^{k+1}-c_n^k,c_n^{k+1})$ (cf. \eqref{tail1} and \eqref{tail2})
\item $p_{n}(u_j)\le \frac{1}{2^{c_n^k+t_n+1}} \min_{l\in [0,c_n^k+t_n)}p_{1}(u_{l})$ for every $j\in [c_n^{k}+t_n,2c_n^{k}+t_n)$ (cf. \eqref{tail1} and \eqref{tail2})
\item $p_{N_n}(u_j)\le \frac{1}{D_n}$ for every $j\in [c_n^{k}+t_n,c_n^{k}+2t_n+n]$ (cf. \eqref{tail6})
\item $p_1(u_j)\ge 2^{c_n^{\mu_n}+1}C_{n+2}D_n\sup_{m\le 2t_n+n}p_n(T^mu_0)$ for every $j\in [c_n^{k+1}-c_n^{k},c_n^{k+1})$ (cf. \eqref{tail7})
\end{enumerate}
To this end, we use the assumptions of Theorem~\ref{technique} with 
$\varepsilon$ sufficiently small, $C=2^{c_{n}^{k+1}+1}C_{n+2}$, $J=n+1$, $K=n+1$, $M=\frac{c_n^{k+1}-c_n^k-t_n}{n+1}$
 and $N$ which is the maximal index of elements of the basis $(e_k)$ which have already been used.  We can then let $u_{c_n^{k}+t_n+(j-1)M+m-1}=\alpha_{j,m}e_{n_{j,m}}$ and since $M\ge c_n^{k}$, we get the desired inequalities.\\

We are now looking for the elements $u_j$ for $j\in [a_{n+1},a_{n+1}+\Delta_{n+1})$. For these indices, we want that
$p_{N_{n}+2}$ is big and that $p_{N_{n+1}}(a_{n+1})$ is small. This is possible since $N_{n+1}-N_n\le 1$ and thus $N_{n}+2>N_{n+1}$. The elements $u_{a_{n+1}},\dots,u_{a_{n+1}+\Delta_{n+1}-1}$ are chosen such that 
\begin{enumerate}
\item $p_{l+1}(u_j)\ge 2^{j+1} p_l(u_{j+1})$ for every $l\le n$, every $j\in [a_{n+1},a_{n+1}+\Delta_{n+1}-1)$ (cf. \eqref{finalcont1})
\item $p_{N_{n+1}+2}(u_j)\ge nD_n2^{a_{n+1}+\Delta_{n+1}}\sup_{l<t_n}p_{1}(T^lu_0)$ for every $j\in [a_{n+1},a_{n+1}+\Delta_{n+1})$(cf. \eqref{K1})
\item $p_{l+2}(u_j)\ge 2^{j+1}p_{l}(u_{j+r})$ for every $l\le n$, every $1\le r\le c_n^{\mu_n}$ and $j\in [a_{n+1}, a_{n+1}+\Delta_{n+1}-r)$ (cf. \eqref{tail1} and \eqref{tail2})
\item $p_{N_{n+1}}(u_{a_{n+1}})<\frac{1}{n+1}$ (cf. \eqref{final2})
\end{enumerate}
We use the assumptions of Theorem~\ref{technique} with $\varepsilon$ sufficiently small, $C=2^{a_{n+1}+\Delta_!{n+1}+1}C_{n+2}$, $J=N_n+2$, $K=n+2$, $M=\Delta_n$ and $N$ which is the maximal index of elements of the basis $(e_k)$ which have already been used. We can then let $u_{a_{n+1}+m-1}=\alpha_{1,m}e_{n_{1,m}}$ and since $M\ge c_n^{\mu_n}$, we get the desired inequalities.\\

We complete by choosing the elements $u_j$ for $j\in [c_n^{\mu_n}+t_n,a_{n+1})$. We want that $p_{n}(u_j)$ is small for $j\in [c_n^{\mu_n}+t_n,\Delta_{n+1}+c_n^{\mu_n})$ and that $p_{1}(u_j)$ is big for $j\in [a_{n+1}-c_n^{\mu_n} ,a_{n+1})$. 
The elements $u_{c_n^{\mu_n}+t_n},\dots,u_{a_{n+1}-1}$ are chosen such that
\begin{enumerate}
\item $p_{l+1}(u_j)\ge 2^{j+1} p_l(u_{j+1})$ for every $l\le n$, every $j\in [c_n^{\mu^n}+t_n,a_{n+1}-1)$ (cf. \eqref{finalcont1})
\item $p_{1}(u_{a_{n+1}-1})\ge 2^{a_{n+1}}\max\{p_n(u_{a_{n+1}}),p_n(u_0)\}$ (cf. \eqref{finalcont1} and \eqref{finalcont2})
\item $p_{n}(u_{\Delta_{n+1}})\le \min\{\frac{p_1(u_{c_n^{\mu_n}+t_n-1})}{2^{\Delta_{n+1}}} ,\frac{p_1(u_{a_{n+1}+\Delta_{n+1}-1})}{2^{a_{n+1}+\Delta_{n+1}}}\}$ (cf. \eqref{finalcont1} and \eqref{finalcont3})
\item $p_{l+2}(u_j)\ge 2^{j+2}C_{n+2}p_{l}(u_{j+r})$ for every $l\le n$, every $1\le r\le c_n^{\mu_n}$, every $j\in [c_n^{\mu^n}+t_n,a_{n+1}-r)$ (cf. \eqref{tail1} and \eqref{tail2})
\item $p_1(u_j)\ge 2^{a_{n+1}+1}C_{n+2}\max\{p_{n}(u_l):l\in [a_{n+1},a_{n+1}+c_n^{\mu_n})\}$ for every $j\in [a_{n+1}-c_n^{\mu_n},a_{n+1})$ (cf. \eqref{tail1} and \eqref{tail2})
\item $p_{n}(u_j)\le \frac{\min\{p_1(u_l):l\in [t_n,c_n^{\mu_n}+t_n)\}}{2^{\Delta_{n+1}+1}C_{n+2}}$ for every $j\in [c_n^{\mu_n}+t_n,\Delta_{n+1}+c_{n}^{\mu_n}]$ (cf. \eqref{tail1} and \eqref{tail2})
\item $p_n(u_j)\le \frac{\min\{p_1(u_l): l\in[a_{n+1}+\Delta_{n+1}-c_n^{\mu_n},a_{n+1}+\Delta_{n+1})\}}{2^{a_{n+1}+\Delta_{n+1}+1}C_{n+2}}$ for every $j\in [\Delta_{n+1},\Delta_{n+1}+c_n^{\mu_n})$ (cf. \eqref{tail3})
\item $p_1(u_j)\ge 2^{a_{n+1}+1}C_{n+2}\sup_{j\le c_n^{\mu_n}}p_n(T^j u_0)$ for every $j\in [a_{n+1}-c_n^{\mu_n},a_{n+1})$ (cf. \eqref{tail4})
\item $p_{N_n}(u_j)\le \frac{1}{D_n}$ for every $j\in [c_n^{\mu^n}+t_n,c_n^{\mu^n}+2t_n+n)$ (cf. \eqref{tail6})
\end{enumerate}
To this end, we apply conditions of Theorem~\ref{technique} with $\varepsilon$ sufficiently small, $C=2^{a_{n+1}+1}C_{n+2}$, $J=n+1$, $K=n+1$, $M=\Delta_{n+1}$ and $N$ which is the maximal index of elements of the basis $(e_k)$ which have already been used. We can then let $u_{c_n^{\mu_n}+t_n+(j-1)M+m-1}=\alpha_{j,m}e_{n_{j,m}}$ and since $M\ge c_n^{\mu_n}$, we get the desired inequalities. We remark that \eqref{tail6} is satisfied since
${\sup_{m\in \bigcup_{1\le k'\le \mu_n}[c_n^{k'},c_{n}^{k'}+2t_n+n]}p_{N_n}(u_m)\le \frac{1}{D_n}}$ and for every $j\in \bigcup_{k=1}^{\mu_n}[c_n^k,c_n^{k}+t_n)$, we have $p_{N_{n}+2}(u_j)\ge 2^{c_n^{k}+t_n+2}C_{n+2}$.\\

Finally, we choose $t_{n+1}>a_{n+1}+\Delta_{n+1}+c_n^{\mu_n}$ such that $t_{n+1}$ is a multiple of $n+2$ and such that we can complete the elements $u_0,\dots,u_{a_{n+1}+\Delta_{n+1}-1}$ so that \eqref{tn} is satisfied. We then let $u_{a_{n+1}+\Delta_{n+1}+j}=\alpha_j e_{m_j}$ for every $0\le j< t_{n+1}-a_{n+1}-\Delta_{n+1}$ where $(m_j)_{j}$ enumerates the missing elements of the family $(e_m)_{m< t_{n+1}}$ and where $(\alpha_j)_{j}$ are  non-zero scalars satisfying
 \begin{enumerate}
 \item $p_{l+1}(u_j)\ge 2^{j+1} p_l(u_{j+1})$ for every $l\le n$, every $j\in [a_{n+1}+\Delta_{n+1},t_{n+1}-1)$ (cf. \eqref{finalcont1})
\item $p_{n}(u_{a_{n+1}+\Delta_{n+1}})\le \frac{p_1(u_{a_{n+1}+\Delta_{n+1}-1})}{2^{a_{n+1}+\Delta_{n+1}}}$ (cf. \eqref{finalcont1})
\item $p_{l+2}(u_j)\ge 2^{j+2}C_{n+2}p_{l}(u_{j+r})$ for every $l\le n$, every $1\le r\le c_n^{\mu_n}$, every $j\in [a_{n+1}+\Delta_{n+1},t_{n+1}-r)$ (cf. \eqref{tail1} and \eqref{tail2})
\item $p_{n}(u_j)\le \frac{\min\{p_1(u_l):l\in [a_{n+1}+\Delta_{n+1}-c_n^{\mu_n},a_{n+1}+\Delta_{n+1})\}}{2^{a_{n+1}+\Delta_{n+1}+1}C_{n+2}}$ for every $j\in [a_{n+1}+\Delta_{n+1},a_{n+1}+\Delta_{n+1}+c_{n}^{\mu_n})$ (cf. \eqref{tail1} and \eqref{tail2})
 \end{enumerate}
These conditions are satisfied if $|\alpha_j|$ are small and decrease sufficiently rapidly.\\

It can now be verified that the sequence $(u_n)_{n\ge 0}$ has been constructed so that \eqref{tn}, \eqref{finalcont1}-\eqref{finalcont5}, \eqref{K1}, \eqref{tail1}-\eqref{tail7} and \eqref{final1}-\eqref{final3} are satisified and it thus follows from Lemma~\ref{finalresult} that $X$ does not satisfy the Invariant Subset Property.
\end{proof}

\section{Further remarks on Fréchet spaces without continuous norm}

Let $(X,(p_j)_j)$ be a separable infinite-dimensional Fréchet space with a Schauder basis $(e_n)_{n\ge 0}$. If $X$ does not possess a continuous norm, we have succeeded to show that if $\ker p_{j+1}$ is a subspace of finite codimension in $\ker p_j$ then $X$ satisfies the Invariant Subspace Property (Theorem~\ref{omega}) and that if for every $j\ge 1$, $\ker p_{j+1}$ is a subspace of infinite codimension in $\ker p_j$ then $X$ does not satisfy the Hereditary Invariant Subset Property (Corollary~\ref{heredker}). However, we do not know  if in the second case, $X$ can satisfy the Invariant Subset Property. Indeed, none of our approaches seems to be working for Fréchet spaces where $\ker p_{j+1}$ is a subspace of infinite codimension in $\ker p_j$ . 

In the approach of Section~\ref{Sec1}, we have worked with a special family $(x^{(l)})_l$ and we have modified this family so that each series $\sum_{l=1}^{\infty}\beta_l x^{(l)}$ exists for every sequence $(\beta_l)$. This could be easily done in the proof of Theorem~\ref{omega} when $\ker p_{j+1}$ is a subspace of finite codimension in $\ker p_j$ since it sufficed to cancel a finite number of coordinates. However, if $\ker p_{j+1}$ is a subspace of infinite codimension in $\ker p_j$, it could be necessary to cancel an infinite number of coordinates and this seems difficult to guarantee. 

On the other hand, if for every $j\ge 1$, $\ker p_{j+1}$ is a subspace of infinite codimension in $\ker p_j$ then we can find a sequence of seminorms $(q_j)$ inducing the same topology than $(p_j)$ such that for every $\varepsilon>0$, every $C,M,N\ge 1$, every $K\ge J\ge 2$, there exists $(n_{j,m})_{j\le J, m\le M}\subset]N,\infty[$ and a sequence $(\alpha_{j,m})_{j\le J, m\le M}$ of non-zero scalars
such that for every $j \le J$, every $m\le M$,
\begin{enumerate}
\item if $j<J$ then $p_{J-j}(\alpha_{j,m}e_{n_{j,m}})=0$,
\item $p_1(\alpha_{J,m}e_{n_{J,m}})\ge \frac{1}{\varepsilon}$,
\item for every $1\le l\le M$, if $\next^l(j,m)$ exists then for every $k< K$,
\[ C p_k(\alpha_{\next^{l}(j,m)}e_{n_{\next^{l}(j,m)}})\le p_{k+1}(\alpha_{j,m}e_{n_{j,m}}).\]
\end{enumerate}
We can thus hope to be able to construct a Read-type operator whose each non-zero vector is hypercyclic as this is done in the proof of Theorem~\ref{technique}. However, an important part of the proof relies on the fact that the sets $K_n$ are compact where
\begin{equation*}
K_n=\Big\{y\in \text{span}(u_0,\dots,u_{t_{n}-1})~:~ p_{1}(y)\le \frac{3}{2}\ \text{and}\  p_{1}(\tau_n y)\ge 1/2\Big\}
\end{equation*}
and unfortunately, if $p_1$ is not a norm, it cannot be guarantee that these sets $K_n$ are compact.
We thus ask the following question:

\begin{problem}
If $(X,(p_j)_j)$ is a separable infinite-dimensional Fréchet space with a Schauder basis $(e_n)$ such that  for every $j\ge 1$, $\ker p_{j+1}$ is a subspace of infinite codimension in $\ker p_j$, does this space satisfy the Invariant Subspace/Subset Property?
\end{problem}

\section*{Acknowledgments}
The author is grateful to Aharon Atzmon, José Bonet, Sophie Grivaux and Alfred Peris for interesting comments on this paper.

\end{document}